\documentclass{article}
\usepackage{amssymb,amsmath,amsthm}
\usepackage[all]{xy}

\usepackage{amssymb,amsmath,amsthm}
\usepackage[mathscr]{eucal}%
\usepackage[all]{xy}
\usepackage{lmodern}
\usepackage[T1]{fontenc}
\usepackage[utf8]{inputenc}
\usepackage{tikz-cd}
\usepackage[shortlabels]{enumitem}
\usepackage{relsize}
\usepackage{geometry}
\usepackage{mathtools}
\usepackage{adjustbox}
\usepackage{lscape}
\usepackage{diagbox}
\usepackage{bm}
\usepackage{caption}
\usepackage{float}

\usepackage{stmaryrd}
\usepackage{mathrsfs}  
\usepackage{multirow}
\usepackage{hyperref}
\usetikzlibrary{positioning}
\usepackage{tikz-3dplot}
\usepackage{xifthen}
\newtheorem{definition}{Definition}[section]
\newtheorem{proposition}[definition]{Proposition}
\newtheorem{theorem}[definition]{Theorem}
\newtheorem{corollary}[definition]{Corollary}
\newtheorem{lemma}[definition]{Lemma}

\newtheorem{remark}[definition]{Remark}

\numberwithin{equation}{section}

\DeclareMathAlphabet{\mathpzc}{OT1}{pzc}{m}{it}

\DeclareMathOperator{\Hom}{Hom}

\DeclareMathOperator{\Ind}{Ind}

\DeclareMathOperator{\GL}{GL}

\DeclareMathOperator{\SL}{SL}

\DeclareMathOperator{\Gal}{Gal}

\DeclareMathOperator{\tr}{tr}
\DeclareMathOperator{\rk}{\mathrm{rk}}

\DeclareMathOperator{\Fr}{Fr}

\DeclareMathOperator{\soc}{soc}

\DeclareMathOperator{\Ext}{Ext}
\DeclareMathOperator{\Tor}{Tor}

\DeclareMathOperator{\gr}{gr}

\DeclareMathOperator{\Sym}{Sym}
\DeclareMathOperator{\JH}{JH}

\DeclareMathOperator{\Nm}{Nm}
\DeclareMathOperator{\Nrd}{Nrd}

\newcommand{\Indu}[3]{\Ind_{#1}^{#2}{#3}}

\newcommand{\fa}{\mathfrak{a}}
\newcommand{\fb}{\mathfrak{b}}

\newcommand{\fm}{\mathfrak{m}}

\newcommand{\fp}{\mathfrak{p}}
\newcommand{\fq}{\mathfrak{q}}

\newcommand{\bA}{\mathbb{A}}

\newcommand{\bF}{\mathbb{F}}

\newcommand{\bN}{\mathbb{N}}

\newcommand{\bQ}{\mathbb{Q}}

\newcommand{\bT}{\mathbb{T}}

\newcommand{\bZ}{\mathbb{Z}}

\newcommand{\cD}{\mathcal{D}}

\newcommand{\cK}{\mathcal{K}}
\newcommand{\cL}{\mathcal{L}}

\newcommand{\cO}{\mathcal{O}}

\newcommand{\cR}{\mathcal{R}}

\newcommand{\cX}{\mathcal{X}}

\begin{document}
\title{Associated graded modules for modular representations of quaternion algebras}
\author{Jiusi Zhao}
\date{\today}
\maketitle

\begin{abstract}
    Let $K$ be a finite unramified extension of $\bQ_p$ and $D$ the non-split quaternion algebra over $K$.  
    We compute the associated graded modules of the duals of the predicted representations of $D^\times$ occurring in the Jacquet--Langlands correspondence. 
\end{abstract}

\tableofcontents

\section{Introduction}

Let $p$ be a prime number and $K$ be an unramified extension of $\bQ_p$. 
The mod-$p$ Langlands correspondence for $\GL_2(K)$ is well understood for $K=\bQ_p$.  
For $K\ne \bQ_p$, the correspondence is still mysterious since there are too many smooth admissible representations of $\GL_2(K)$. 
However, there are some candidates $\pi(\bar{\rho})$ for a predicted Langlands correspondence $\bar{\rho}\mapsto \pi(\bar{\rho})$ arising from the geometry of Shimura varieties,  
and some results about $\pi(\bar{\rho})$ have been established in \cite{BHH$^+$23}, \cite{BHH$^+$25a}, \cite{BHH$^+$25b}, \cite{BHH$^+$25c}, etc. 
For example, $\pi(\bar{\rho})$ has Gelfand--Kirillov dimension $f$, is of finite length, and the associated graded module of its dual $\gr_\fm \pi(\bar{\rho})^\vee$ has been computed explicitly. 

We say a bit more about the computation of $\gr_\fm \pi(\bar{\rho})^\vee$ (see \cite[\S~2]{BHH$^+$25c}). The process can be divided into two steps. 
The first step is to establish several properties of $\pi(\bar{\rho})$, using, for instance, the Taylor--Wiles patching method.  
The second step, which is purely representation-theoretic, is to deduce the result using these properties. 

In parallel, there is a predicted mod-$p$ Jacquet--Langlands correspondence $\bar{\rho}\mapsto \pi_D(\bar{\rho})$ between the $2$-dimensional continuous representations of $\Gal(\overline{K}/K)$ and certain smooth admissible representations of $D^\times$, where $D$ is the unique non-split quaternion algebra over $K$. 
Similar properties hold for $\pi_D(\bar{\rho})$, as shown in \cite{DL26}. 
In this article, we compute the associated graded module of its dual $\gr_\fm\pi_D(\bar{\rho})^\vee$, following the method of \cite[\S~2]{BHH$^+$25c}. 
Here is our main result (see Section~\ref{sec-2} for the notation used below). 

\begin{theorem}[Theorem~\ref{main-thm}]
        Let $\bar{\rho}:\Gal(\overline{K}/K)\to\GL_2(\bF)$ be a continuous representation which is $5$-generic. 
        Let $\pi$ be a smooth representation of $D^\times$ over $\bF$ with a central character, satisfying the following conditions: 
    
        (i) $\pi[\fm]=(\bigoplus_{\chi\in W_D(\bar{\rho})}\chi)^{\oplus r}$ for some $r\geq 1$,  
        with $Z_D$ acting by $\det(\bar{\rho})\omega^{-1}$.
          
        (ii)  For each $\chi\in W_D(\bar{\rho})$, $[\pi[\fm^3]:\chi]=r$. 
    
        (iii) For each $\chi: \cO_D^\times\to \bF^\times$ and $i\geq 0$, $\Ext^i_{\cO_D^\times/Z_D^1}(\chi,\pi)\ne 0$ only if $\chi\in W_D(\bar{\rho})$, 
            in which case $m_i:=\dim_{\bF}\Ext^i_{\cO_D^\times/Z_D^1}(\chi,\pi)=\binom{2f}{i}r$. 
        
        Then we have an isomorphism 
        $$\gr_\fm\pi^\vee\cong (\bigoplus_{\chi\in W_D(\bar{\rho})}\chi^{-1}\otimes R/\fa(\chi))^{\oplus r}.$$
        Moreover, $\gr_\fm\pi^\vee$ is a Cohen--Macaulay $\gr \Lambda$-module of grade $2f$, that is, 
        $\Ext_{\gr \Lambda}^i(\gr_\fm\pi^\vee,\gr \Lambda)\ne 0$ if and only if $i=2f$. 
        In addition, $\gr_\fm\pi^\vee$ is essentially self-dual in the sense that 
        $$\Ext_{\gr \Lambda}^{2f}(\gr_\fm\pi^\vee,\gr \Lambda)\cong \gr_\fm\pi^\vee\otimes(\det(\bar{\rho})\omega^{-1}).$$ 
\end{theorem}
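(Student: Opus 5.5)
I would follow the strategy of \cite[\S~2]{BHH$^+$25c}, adapted to $D^\times$. Write $\Lambda=\bF[\![U_D^1/Z_D^1]\!]$. Its graded ring $\gr\Lambda$ is a (non-commutative) enveloping algebra of a graded Lie algebra over $\bF$ of dimension $3f$, with generators $y_i,z_i$ in degree $1$ and their brackets $h_i$. The ring $R$ is the quotient of $\gr\Lambda$ by the central (or normal) ideal generated by the $h_i$, so $R$ is essentially $\bF[y_i,z_i]/(y_iz_i,z_iy_i)$-type data. The ideals $\fa(\chi)$ are generated by degree-one and degree-two elements $y_iz_i$, $z_iy_i$, together with one of $y_i$ or $z_i$ depending on $\chi$, as in Section~\ref{sec-2}.

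\textbf{Step 1: produce a surjection.} Decompose $\pi^\vee$ according to the $\cO_D^\times$-eigencharacters. Condition (i) says $(\pi^\vee)/\fm\pi^\vee\cong(\bigoplus_\chi\chi^{-1})^{\oplus r}$. Hence $\gr_\fm\pi^\vee$ is generated in degree $0$ by these eigenvectors, giving a surjection $(\bigoplus_\chi\chi^{-1}\otimes\gr\Lambda)^{\oplus r}\twoheadrightarrow\gr_\fm\pi^\vee$. Next, use (ii), the multiplicity of $\chi$ in $\pi[\fm^3]$. Combined with the weight-shift rule for $y_i,z_i$ (they change the character by $\alpha_i^{\pm1}$) and $5$-genericity of $W_D(\bar\rho)$, this shows the relevant degree-$1$ and degree-$2$ components vanish. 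Concretely, $y_i$ or $z_i$ kills $\chi^{-1}$ whenever the shifted character does not lie in $W_D(\bar\rho)$. Also $y_iz_i,z_iy_i$ and the $h_i$ act trivially, since otherwise they would produce extra copies of $\chi$ in $\pi[\fm^3]$ (here $h_i$ has degree $2$ and weight $0$). The conclusion is a surjection
$$(\textstyle\bigoplus_\chi\chi^{-1}\otimes R/\fa(\chi))^{\oplus r}\twoheadrightarrow\gr_\fm\pi^\vee.$$

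\textbf{Step 2: show it is an isomorphism.} I would compare Gelfand--Kirillov dimensions and multiplicities, or better, prove injectivity via a Cohen--Macaulay argument. First, condition (iii) with $m_i=\binom{2f}{i}r$ shows the $\Ext$-groups of $\pi$ look like those of a free module over the $2f$-dimensional ``Koszul'' part. A spectral-sequence (Lyndon--Hochschild--Serre or filtration) argument then converts this into the statement that $\pi^\vee$ is Cohen--Macaulay of grade $2f$ over $\Lambda$, using the fact that $U_D^1/Z_D^1$ is a Poincaré group of dimension $3f$. Second, each $R/\fa(\chi)$ is a Stanley--Reisner-type ring, a tensor product over $i$ of $\bF[y_i,z_i]/(y_iz_i)$ or $\bF[y_i]$-type factors. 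Such a ring is Cohen--Macaulay of dimension $f$, hence of grade $2f$ over $\gr\Lambda$. Its multiplicity (the Hilbert polynomial leading coefficient) can be computed directly. Third, matching the multiplicity of $\pi^\vee$, obtained from the Euler characteristic given by (iii), with that of the source forces the kernel $N$ to have dimension $<f$. The kernel is then zero, because a Cohen--Macaulay module (the source) has no nonzero submodules of lower dimension. This purity step is where I expect the main difficulty. One must verify that the $m_i$ data really pins down the multiplicity of $\gr_\fm\pi^\vee$, and that gradings behave well, i.e.\ $\Ext_{\gr\Lambda}$ versus $\gr\Ext_\Lambda$. For this I would use that the $\fm$-adic filtration is good and the standard inequality $j_\Lambda(M)\le j_{\gr\Lambda}(\gr M)$, with equality in the Auslander-regular setting.

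\textbf{Step 3: duality.} Once the isomorphism holds, Cohen--Macaulayness follows from the corresponding property of each $R/\fa(\chi)$. Each $R/\fa(\chi)$ has a Koszul-type resolution over $\gr\Lambda$: kill the $h_i$ (a regular normal sequence of length $f$) and then use the Stanley--Reisner resolution of length $f$, for total grade $2f$. Computing $\Ext^{2f}$ explicitly shows that $\Ext^{2f}_{\gr\Lambda}(\chi^{-1}\otimes R/\fa(\chi),\gr\Lambda)\cong\chi'\otimes R/\fa(\chi')$. Here $\chi'$ is obtained from $\chi^{-1}$ by twisting by the central character and the modulus (determinant) character of the resolution. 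The last step is to check, using the explicit description of $W_D(\bar\rho)$, that $\chi\mapsto\chi'$ permutes $W_D(\bar\rho)$ with $\fa$ preserved, up to the twist $\det(\bar\rho)\omega^{-1}$. This yields the self-duality.
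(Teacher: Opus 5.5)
Your Steps 1 and 3 agree in substance with the paper. The paper gets the surjection $N\twoheadrightarrow\gr_\fm\pi^\vee$ from (i), (ii) and Proposition~\ref{killed-by-I_D}, and gets grade $2f$ and self-duality of $N$ from the complete-intersection resolution and the involution $\chi\mapsto\chi^*$ of Corollary~\ref{ess-self-dual}. There is one slip in Step 1: you have the rule backwards. Condition (ii) shows that $y_j$ kills the generator of weight $\chi^{-1}$ exactly when $\chi\alpha_j^{-1}$ \emph{does} lie in $W_D(\bar{\rho})$, because that eigenspace is then already exhausted by $\pi[\fm]$. When neither $\chi\alpha_j^{\pm1}$ lies in $W_D(\bar{\rho})$, nothing is killed in degree $1$, which is why $t_j=y_jz_j$ there. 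Also, $R=\gr\Lambda/(h_j)$ is the full polynomial ring; the relations $y_jz_j$ sit inside $\fa(\chi)$.

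\textbf{The genuine gap is Step 2.} Condition (iii) only gives the ranks, with $H$-characters, of the terms of a minimal free resolution $P_\bullet$ of $\pi^\vee$. It says nothing about the filtration degrees of the syzygies, and the multiplicity (or cycle) of $\gr_\fm\pi^\vee$ is governed by exactly those degrees.
\begin{itemize}
\item The Euler characteristic you propose is $\sum_i(-1)^i\binom{2f}{i}r=0$, even $H$-equivariantly, so it carries no information.
\item Ungraded Betti numbers do not determine multiplicities in general: $\bF[[x]]/(x)$ and $\bF[[x]]/(x^2)$ have the same Betti numbers.
\item The spectral sequence from $\Tor^{\gr\Lambda}(\bF,\gr_\fm\pi^\vee)$ to $\gr\Tor^\Lambda(\bF,\pi^\vee)$ only yields inequalities.
\end{itemize}
So nothing in your outline forces the kernel of $N\twoheadrightarrow\gr_\fm\pi^\vee$ to have dimension $<f$. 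Likewise, projective dimension $2f$ from (iii) does not by itself give grade $2f$ for $\pi^\vee$. The purity step you flag as the main difficulty is actually easy once $N$ is known to be Cohen--Macaulay and self-dual.

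\textbf{The missing idea is a finite-dimensional comparison object that pins down the filtrations on $P_0,P_1,P_2$.}
\begin{enumerate}
\item The paper builds $\tau=\tau^{(3)}\hookrightarrow\pi|_{\cO_D^\times}$ from tensor products of the uniserial representations $E_j^{\pm}(s)$ and their amalgams. It satisfies $\gr_\fm\tau^\vee\cong N/IN$ with $I=(y_j^3,z_j^3,h_j)$, and the composite $N\to\gr_\fm\pi^\vee\to\gr_\fm\tau^\vee$ is the natural projection (Lemma~\ref{(2n-1)-generic}; $5$-genericity enters here).
\item The minimal resolution of $N/IN$ contains the minimal resolution $G'_\bullet$ of $N$ as a summand. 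It lifts to a minimal strict filt-free resolution $L_\bullet=L'_\bullet\oplus L''_\bullet$ of $\tau^\vee$. In homological degrees $\le 2$, the two parts are separated in degree and have disjoint $H$-characters (Lemma~\ref{min-of-L}).
\item The key input is Proposition~\ref{tor-inj}: the map $\Tor_i^\Lambda(\bF,\pi^\vee)\to\Tor_i^\Lambda(\bF,\tau^\vee)$ is injective for $i\le 2$. This is proved by a spectral-sequence argument using the character separation of Lemma~\ref{char-sep}.
\item This embeds $P_i$ as a direct summand of $L_i$ for $i\le2$; give $P_i$ the induced filtration. Condition (iii) identifies $\bF\otimes_\Lambda P_i$ with $\bF\otimes_\Lambda L'_i$ as $H$-modules, and Lemma~\ref{lemma-of-filt} then gives $\gr P_i=G'_i$.
\item Hence $\pi^\vee$ carries a good filtration $F$ with $\gr_F\pi^\vee\cong N$. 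By Lemma~\ref{cycle}, $Z(\gr_\fm\pi^\vee)=Z(\gr_F\pi^\vee)=Z(N)$, and only then does purity of $N$ kill the kernel.
\end{enumerate}
This is where (iii) is really used, and it is the heart of the proof. Without it, or some other way to control the degrees of the syzygies of $\pi^\vee$, your Step 2 does not go through.
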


We remark that, in the original work \cite{BHH$^+$25c}, 
the authors assume that $\bar{\rho}$ is $9$-generic. 
A refinement of the spectral-sequence argument shows that \(5\)-genericity is sufficient.

Here is the organization of this article. In Section~\ref{sec-2}, we recall some basic facts that will be used later. 
In Section~\ref{sec-3}, we state and explain our main theorem. 
In Section~\ref{sec-4}, we establish the analogous representation-theoretic result for the quaternion algebra, 
so that the method of \cite[\S~2]{BHH$^+$25c} can be applied, and we prove the main theorem. 
Finally, in Section~\ref{sec-5}, we verify that these conditions hold for representations arising from the global setting.

\medskip
\noindent\textbf{Acknowledgments.}
The author thanks his advisor Yongquan Hu, for suggesting the topic and pointing out that the genericity hypothesis in the main theorem might be weakened, and for many useful discussions.

\medskip
\noindent\textbf{Notation.}
Let $K$ be the unramified extension of $\bQ_p$ of degree $f$. 
 Write $q=p^f$. 
 Let $E$ be a sufficiently large finite unramified extension of $\bQ_p$ with residue field $\bF$. We use $E$ and $\bF$ as coefficient fields. 
 We assume that $\bF$ is large enough, and we always fix an embedding $\iota:\bF_{q^2}\hookrightarrow\bF$. 
 We define $\kappa:\bF_{q}\hookrightarrow \bF$ by $\iota^{q+1}=\kappa\circ \Nm_{\bF_{q^2}/\bF_q}$. 
Let $\omega_f$ be the fundamental character of $I_K$ defined by 
$$I_K\xrightarrow{Art_K^{-1}}\cO_K^\times\xrightarrow{\mod p} \bF_q^\times\stackrel{\kappa}{\longrightarrow}\bF^\times.$$
Similarly, let $\omega_{2f}$ be the fundamental character associated to $\iota$. 
Moreover, let $\omega=\omega_1$ be the fundamental character associated to the unique embedding $\bF_p\hookrightarrow\bF$. 
We have $$\omega_{2f}^{1+q}=\omega_f, \quad \omega_f^{1+p+\cdots+p^{f-1}}=\omega.$$

Let $\bar{\rho}:\Gal(\overline{K}/K)\to\GL_2(\bF)$ be a continuous representation. 
We say that $\bar{\rho}$ is $n$-generic for some integer $n\geq 0$ if, up to twist, $\bar{\rho}|_{I_K}^{ss}\ncong 1\oplus \omega$ and either 
\begin{equation}\label{red-galois-rep}
    \bar{\rho}|_{I_K}=\left(\begin{array}{cc}
    \omega_{f}^{\sum_{j=0}^{f-1}p^j(r_j+1)}&*\\
    0&1\\
    \end{array}\right), 
\end{equation}
with $n\leq r_j\leq p-3-n$ for all $0\leq j\leq f-1$, or 
\begin{equation}\label{irr-galois-rep}
    \bar{\rho}|_{I_K}=\left(\begin{array}{cc}
        \omega_{2f}^{\sum_{j=0}^{f-1}p^j(r_j+1)}&0\\
        0&  \omega_{2f}^{q\sum_{j=0}^{f-1}p^j(r_j+1)}\\
    \end{array}\right), 
\end{equation}
with $n+1\leq r_0\leq p-2-n$ and $n\leq r_j\leq p-3-n$ for all $1\leq j\leq f-1$. 
This notion of genericity is the same as that in \cite[\S~1.3]{BHH$^+$25c}, 
and if $\bar{\rho}$ is $n$-generic, then it is $n$-generic in the sense of \cite[Def.~2.3.4]{BHH$^+$23} and of \cite[\S~3.4]{DL26}. 

\section{Background}\label{sec-2}

\subsection{Iwasawa algebra for quaternion algebras}
Let $D$ be the unique non-split quaternion algebra over $K$, which can be described as follows. Let $L$ be the unramified extension of $K$ of degree $2$, 
and let $\sigma\in\Gal(L/K)$ be the Frobenius element. Then we have $D=L\oplus \varpi_D L$, whose multiplication is given by $\varpi_D^2=p$ and 
\begin{equation}
    x\varpi_D=\varpi_D\sigma(x),\quad x\in L.
\end{equation}

For $a\in D$, define $v_D(a)=v_p(\Nrd_D(a))$, where $\Nrd_D:D\to K$ is the reduced norm. 
Let $\cO_D=\{a\in D:v_D(a)\geq 0\}$ be the ring of integers and let $\fp_D=\{a\in D:v_D(a)\geq 1\}$ be the maximal ideal. 
Let $D^\times$ (resp. $\cO_D^\times$) be the multiplicative group of $D$ (resp. $\cO_D$). 
For $n\geq 1$, let $U_D^n=1+\varpi_D^n\cO_D$ be the $n$-th principal congruence subgroup. 
Let $Z_D\cong K^\times$ be the center of $D^\times$, and write $Z_D^1=Z_D\cap U_D^1$. 

Recall that, for a compact $p$-adic Lie group $G$, its Iwasawa algebra over $\bF$ is defined as 
$$\bF[[G]]=\varprojlim_{N}\bF[G/N],$$
where $N$ runs over open normal subgroups of $G$. The Pontryagin dual 
$$\pi\mapsto \pi^\vee=\Hom_{\bF}(\pi,\bF)$$ 
gives an anti-equivalence of categories between smooth $G$-representations over $\bF$ and profinite continuous $\bF[[G]]$-modules \cite[Lemma~2.2.7]{Eme10}. 
Moreover, $\pi$ is admissible if and only if $\pi^\vee$ is a finitely generated $\bF[[G]]$-module \cite[Lemma~2.2.11]{Eme10}.

Let $\Lambda=\bF[[U_D^1/Z_D^1]]$ be the Iwasawa algebra of $U_D^1/Z_D^1$, with maximal ideal $\fm$. 
The graded ring $\gr \Lambda$ with respect to the $\fm$-adic filtration is computed in \cite[\S~2.2, 2.3]{HW24a}. 

\begin{proposition}
    We have 
    $$\gr \Lambda\cong\bigotimes_{j=0}^{f-1}\bF[y_j,z_j,h_j],$$
    with the relations $[y_j,z_j]=h_j$ and $[y_j,h_j]=[z_j,h_j]=0$. 
    In particular, the sequence $(h_0,\ldots,h_{f-1})$ is a regular sequence of central elements, and 
    $\gr \Lambda/(h_0,\ldots,h_{f-1})$ is a commutative polynomial ring with $2f$ variables. 
\end{proposition}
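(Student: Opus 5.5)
The plan is to recover $\gr\Lambda$ from the Lie algebra of the uniform pro-$p$ group $U_D^1/Z_D^1$, and to keep track of the extra structure coming from the embedding $\iota$ (or $\bF_{q^2}\otimes_{\bF_p}\bF\cong\prod\bF$). First I would show that the $\fm$-adic filtration on $\Lambda$ matches the filtration induced by the $p$-valuation on $U_D^1/Z_D^1$. Here I set $\omega(g)=i/2$ for $g\in U_D^i\setminus U_D^{i+1}$; this is valid because $p>2$ is odd and $K/\bQ_p$ is unramified. With this choice, Lazard's theorem gives
\[
\gr\Lambda\cong U(\gr U_D^1/Z_D^1\otimes_{\bF_p[\pi]}\bF),
\]
where the graded Lie algebra $\mathfrak g=\bigoplus_i U_D^i Z_D/U_D^{i+1}Z_D$ is computed explicitly. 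The summand $U_D^1/U_D^2\cong\bF_{q^2}$ comes from $1+\varpi_D x$, while $U_D^2Z_D/U_D^3Z_D\cong\bF_{q^2}/\bF_q$. The commutator $(1+\varpi_D a,1+\varpi_D b)$ lies in $U_D^2$ and reduces to the class of $a\sigma(b)-b\sigma(a)$, which is the class of $\varpi_D^2 a^q b - \varpi_D^2 b^q a$. Tensoring with $\bF$ and using the $2f$ embeddings $\tau_j,\tau_j\sigma$ splits degree $1$ into eigenlines $y_j,z_j$ for the torus action of $\bF_{q^2}^\times$. Degree $2$ becomes the $f$-dimensional space spanned by the $h_j$, once the $\bF_q$ part has been modded out by the center.

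Second, I would compute the brackets in the eigenbasis. The bracket is $\bF_{q^2}^\times$-equivariant, so $[y_j,z_k]$ is nonzero only when the characters match, which forces $k=j$. The form $a^qb-b^qa$ shows that $[y_j,y_k]=[z_j,z_k]=0$ and that $[y_j,z_j]$ is a nonzero multiple of $h_j$, which I normalize to $h_j$. Next I need $\mathfrak g$ to vanish in degrees $\ge 3$ after passing to the mod-$\fm$ graded setting. This is the subtle point: Lazard's filtration also involves the $p$-power map $\pi$. The claim in the statement is about the $\fm$-adic filtration, so I would follow \cite[\S~2.2, 2.3]{HW24a} and show that $\gr_\fm\Lambda$ is the quotient of $U(\mathfrak g\otimes\bF)$ by the ideal generated by $\pi$, keeping only the degree-$1$ and degree-$2$ pieces. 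Equivalently, the degree-$3$ elements are $p$-th powers, or already lie in $\fm^3$. This identification is the step I expect to be the main obstacle, because one must check that the $p$-power map sends $U_D^1$ into the filtration deeply enough (namely $p$-th powers lie in $U_D^{2p}\subset\fm^{3}$). That check is where $p\ge 5$ and genericity-free size estimates enter. I would verify it directly from the formula $(1+x)^p\equiv 1+x^p \bmod p$ in $\cO_D$.

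Finally, the "in particular" parts follow from the presentation. Each $h_j$ is central, because $[y_j,h_j]=[z_j,h_j]=0$ and the tensor factors commute. $\gr\Lambda$ is an iterated Ore extension and therefore a domain, and in fact each factor is the enveloping algebra of a Heisenberg Lie algebra. Hence multiplication by $h_0$ is injective. Moreover, $\gr\Lambda/(h_0)\cong\bF[y_0,z_0]\otimes\bigotimes_{j\ge1}\bF[y_j,z_j,h_j]$ is again a domain, so by induction $(h_0,\dots,h_{f-1})$ is a regular sequence. The final quotient has all brackets zero, so it is the commutative polynomial ring $\bF[y_j,z_j]_{0\le j\le f-1}$ in $2f$ variables.
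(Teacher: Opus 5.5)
The paper gives no argument of its own here: it cites \cite[\S~2.2, 2.3]{HW24a}, where the computation follows the lines you propose. That is, Lazard theory for $\omega=i/2$ on $U_D^1/Z_D^1$, then the graded Lie algebra $\bF_{q^2}\oplus\bF_{q^2}/\bF_q$ with bracket induced by $a^qb-b^qa$, then the $H$-eigenbasis. Your last paragraph on centrality and regularity is fine. The problem is the step you yourself single out as the main obstacle: the claim that $p$-th powers lie in $U_D^{2p}$ is false. Since $p\cO_D=\fp_D^2$, for $a\in\cO_D^\times$ one has $(1+\varpi_D a)^p\equiv 1+p\varpi_D a \bmod \fp_D^4$, because the terms $\binom{p}{k}(\varpi_Da)^k$ with $2\le k\le p-1$ and $(\varpi_D a)^p$ all have valuation $\geq 4$ once $p\geq 5$. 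Hence $(1+\varpi_Da)^p\in U_D^3\setminus U_D^4$. The congruence $(1+x)^p\equiv 1+x^p \bmod p$ in $\cO_D$ is useless at this level, precisely because $p$ has $\varpi_D$-valuation only $2$. Moreover, if $p$-th powers really were that deep, the degree-$3/2$ piece (your degree $3$) $U_D^3Z_D^1/U_D^4Z_D^1\cong\bF_{q^2}$ would not lie in the image of $\pi$. It would then survive in $\bF\otimes_{\bF_p[\pi]}\gr(U_D^1/Z_D^1)$ and give extra generators of $\gr\Lambda$, which is the opposite of what you want.

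What you actually need is that $\pi\colon\gr_\nu\to\gr_{\nu+1}$ is bijective for every $\nu\geq 1/2$ on $G=U_D^1/Z_D^1$. This follows from $(1+y)^p\equiv 1+py \bmod \fp_D^{n+3}$ for $y\in\fp_D^n$, $n\geq 1$, $p\geq 5$. You also need the check that $Z_D^1$ contributes only in integral degrees, where it cuts out $\bF_q\subset\bF_{q^2}$ compatibly with $\pi$. Then $\gr G$ is free over $\bF_p[\pi]$ on $\gr_{1/2}\oplus\gr_1$, of dimension $2f+f$, and Lazard's base change $\bF\otimes_{\bF_p[\pi]}$ leaves exactly the $y_j,z_j,h_j$. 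On the algebra side, $\pi$ dies because $g^p-1=(g-1)^p\in\fm^p$ in $\Lambda$, not because $g^p$ is deep in $D^\times$.

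Your first step, matching the $\fm$-adic and $\omega$-filtrations with degrees doubled, also needs its reason stated. The reason is that $\gr_\omega\Lambda$ is generated in degree $1/2$: $[y_j,z_j]$ is $\pm2$ (nonzero as $p\neq 2$) times the $j$-th coordinate vector of $(\bF_{q^2}/\bF_q)\otimes_{\bF_p}\bF\cong\bF^f$. Hence $F_\omega^{n/2}\Lambda=\fm^n$. Finally, the condition $\omega(g)=1/2>1/(p-1)$ needs $p\geq 5$, not merely $p$ odd; this is harmless here, since genericity forces $p$ to be large.
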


Note that we have $\cO_D^\times/U_D^1\cong \bF_{q^2}^\times$. 
Moreover, let $H=\{[\mu]:\mu\in \bF_{q^2}^\times\}$. It normalizes $U_D^1$ and $U_D^1/Z_D^1$, so it acts on $\Lambda$ and $\gr \Lambda$. 
We will always identify $H$ with $\cO_D^\times/U_D^1\cong \bF_{q^2}^\times$.

Recall that we have fixed an embedding $\iota:\bF_{q^2}\hookrightarrow \bF$. 
For $0\leq j\leq 2f-1$, let $\alpha_j:\cO_D^\times\to \bF^\times$ be the character $x\mapsto\iota(\bar{x})^{p^j(q-1)}$. 
We usually write $\alpha_j=\iota^{p^j(q-1)}$ by abuse of notation. Note that $\alpha_{j+f}=\alpha_j^{-1}$.

\begin{proposition}
    For each $j$, the elements $y_j$ and $z_j$ are eigenvectors of $H$, with eigencharacters $\alpha_j$ and $\alpha_j^{-1}$ respectively. 
\end{proposition}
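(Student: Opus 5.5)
Since $H$ acts on $\gr\Lambda$ through the conjugation action on $U_D^1/Z_D^1$, and this action preserves the $\fm$-adic filtration, the degree-one part $\gr^1\Lambda=\fm/\fm^2$ is an $H$-stable subspace. By Lazard theory, $\fm/\fm^2$ is the dual of $\bF\otimes_{\bF_p}(U_D^1/Z_D^1)^{\rm ab}/(\cdot)^p$, which is the Frattini quotient of $U_D^1/Z_D^1$. The $H$-action on it is induced by conjugation. The plan is therefore to identify the Frattini quotient $H$-equivariantly and check that $y_j,z_j$ (as chosen in \cite[\S~2.2, 2.3]{HW24a}) are the eigenvectors with the claimed characters.

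\textbf{Step 1: the Frattini quotient.} Every element of $U_D^1$ can be written as $1+\varpi_D[a]+\varpi_D^2 b$ with $a\in\bF_{q^2}$. The map
$$U_D^1\to \bF_{q^2},\qquad 1+\varpi_D x\mapsto \bar{x}\bmod \varpi_D$$
gives an isomorphism $U_D^1/U_D^2\cong\bF_{q^2}$. Modulo $Z_D^1$ and commutators/$p$-th powers, $U_D^2$ contributes nothing new: commutators of $U_D^1$ and the center fill out $U_D^2$ modulo $Z_D^1$. This matches the fact that $\gr^1\Lambda$ has dimension $2f=[\bF_{q^2}:\bF_p]$, spanned by the $y_j,z_j$. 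So the Frattini quotient is $\bF_{q^2}$, viewed as an $\bF_p$-space.

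\textbf{Step 2: the $H$-action.} For $\mu\in\bF_{q^2}^\times$ we compute
$$[\mu](1+\varpi_D x)[\mu]^{-1}=1+\varpi_D\,\sigma([\mu])\,x\,[\mu]^{-1},$$
using $x\varpi_D=\varpi_D\sigma(x)$. On the residue $\bar{x}$ this is multiplication by $\mu^{q}\mu^{-1}=\mu^{q-1}$. Hence $\bF\otimes_{\bF_p}\bF_{q^2}$ decomposes via the $2f$ embeddings $\iota^{p^j}$, $0\le j\le 2f-1$. On the $j$-th factor, $\mu$ acts by $\iota(\mu)^{p^j(q-1)}=\alpha_j(\mu)$. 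Passing to the dual $\fm/\fm^2$ turns each character into its inverse, which only permutes the set $\{\alpha_j\}$, since $\alpha_{j+f}=\alpha_j^{-1}$.

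\textbf{Step 3: matching names.} The remaining point, and the one I expect to be the only real obstacle, is matching with the normalization of \cite{HW24a}. There $y_j$ and $z_j$ are defined as the images in $\gr^1$ of $\sum_{\lambda}\lambda^{-p^j}\ldots$-type elements, namely $[1+\varpi_D[\lambda]]-1$ weighted by $\iota$-characters. I would check directly that conjugation by $[\mu]$ rescales the defining element of $y_j$ by $\alpha_j(\mu)$, and that of $z_j$ (the $j+f$ component) by $\alpha_{j+f}(\mu)=\alpha_j^{-1}(\mu)$. This requires keeping careful track of the dualization sign convention.
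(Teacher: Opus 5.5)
\textbf{Verdict: there is a genuine gap.} The paper gives no proof of this statement; it is recalled from \cite[\S\S~2.2, 2.3]{HW24a}, where it amounts to a direct check on the elements defining $y_j$ and $z_j$. Your Step~3 is that check, but you only announce it.

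\textbf{The error in Steps~1--2.} These steps, which you use to frame Step~3, contain an error that bears exactly on the point at issue. For a pro-$p$ group $G$ (here $G=U_D^1/Z_D^1$) one has $\fm/\fm^2\cong\bF\otimes_{\bF_p}G/\Phi(G)$ canonically, via $g\mapsto g-1$. It is $\Hom(G,\bF)$, not $\fm/\fm^2$, that is dual to the Frattini quotient. Moreover, $H$ acts on $\fm/\fm^2$ by conjugation with no inversion, since $[\mu]\bigl((1+\varpi_D[\lambda])-1\bigr)[\mu]^{-1}=(1+\varpi_D[\mu^{q-1}\lambda])-1$. Hence the line of $\fm/\fm^2$ singled out by the embedding $\iota^{p^j}$ carries $\alpha_j$, not $\alpha_j^{-1}$.

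The set $\{\alpha_0,\ldots,\alpha_{2f-1}\}$ is stable under inversion, so your mistake is invisible at the level of which characters occur. But it swaps precisely the assignments $y_j\leftrightarrow\alpha_j$, $z_j\leftrightarrow\alpha_j^{-1}$ that the proposition asserts. ``Tracking the dualization sign convention'' cannot repair this, because there is no dualization to track.

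\textbf{The missing computation.} It is short and makes Steps~1--2 unnecessary. Let $Y_j=\sum_{\lambda\in\bF_{q^2}}\lambda^{-p^j}(1+\varpi_D[\lambda])$, and let $Z_j$ be the same sum with exponent $-qp^j$. Read exponents as residues in $[1,q^2-1]$, so that $\lambda=0$ contributes nothing. Both elements lie in $\fm$ because $\sum_{\lambda}\lambda^{-p^j}=0$.

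Starting from $[\mu]Y_j[\mu]^{-1}=\sum_\lambda\lambda^{-p^j}(1+\varpi_D[\mu^{q-1}\lambda])$ and replacing $\lambda$ by $\mu^{1-q}\lambda$, you get $[\mu]Y_j[\mu]^{-1}=\iota(\mu)^{p^j(q-1)}Y_j=\alpha_j(\mu)Y_j$. Likewise $[\mu]Z_j[\mu]^{-1}=\alpha_{j+f}(\mu)Z_j=\alpha_j(\mu)^{-1}Z_j$. These are identities in $\Lambda$ itself, so they pass to $\gr\Lambda$.

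As a check on the non-dual picture, use $\fm/\fm^2\cong\bF\otimes_{\bF_p}\bF_{q^2}\cong\prod_{k=0}^{2f-1}\bF$, $a\otimes\lambda\mapsto(a\,\iota(\lambda)^{p^k})_k$. Under this map the image of $Y_j$ has $k$-th coordinate $\sum_\lambda\lambda^{p^k-p^j}=-\delta_{jk}$, so it spans the $\iota^{p^j}$-line, whose character is $\alpha_j$.

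\textbf{What remains.} The characters $\alpha_0,\ldots,\alpha_{2f-1}$ are pairwise distinct, so the eigenlines of $\fm/\fm^2$ are one-dimensional. The proposition therefore amounts to $y_j$ and $z_j$ being nonzero multiples of the images of $Y_j$ and $Z_j$. That is the only thing you need to read off from the definitions in \cite{HW24a}.
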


Let $M$ (resp. $N$) be a $\Lambda$ (resp. $\gr \Lambda$)-module. We say that $M$ (resp. $N$) has a compatible $H$-action if $H$ acts on $M$ (resp. $N$) such that 
$h(rx)=h(r)h(x)$ for $r\in \Lambda$ and $x\in M$ (resp. $r\in \gr \Lambda$ and $x\in N$). 
For example, if $\pi$ is a smooth representation of $\cO_D^\times$ over $\bF$, 
then $\pi^\vee$ and $\gr_\fm\pi^\vee$ both have compatible $H$-actions. 

Finally, we introduce an important property for a smooth admissible representation of $D^\times$. 

\begin{proposition}\label{killed-by-I_D}
    Let $\pi$ be a smooth admissible representation of $D^\times$ over $\bF$. 
    Suppose that for each $\chi:\cO_D^\times\to\bF^\times$ such that $[\pi[\fm]:\chi]\ne 0$, 
    we have $[\pi[\fm^3]:\chi]=[\pi[\fm]:\chi]$. Then $\gr_\fm\pi^\vee$ is killed by the ideal 
    $I_D=(y_jz_j,h_j:0\leq j\leq f-1)$. 
\end{proposition}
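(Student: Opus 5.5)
We follow the strategy of the analogous statement for $\GL_2$ (the proof in \cite{BHH$^+$25c} that $\gr_\fm\pi^\vee$ is killed by the ideal $J=(y_jz_j,z_jy_j,h_j)$), adapted to $U_D^1/Z_D^1$. First we reduce to the degree-two part. Since $\gr_\fm\pi^\vee$ is generated in degree $0$ as a $\gr\Lambda$-module, and $I_D$ is generated by homogeneous elements of degree $2$, it suffices to show that every $y_jz_j$ and every $h_j$ acts by zero from $\gr^0_\fm\pi^\vee$ to $\gr^2_\fm\pi^\vee$. Here $\gr^0_\fm\pi^\vee=(\pi[\fm])^\vee$ and $\gr^2_\fm\pi^\vee=(\pi[\fm^3]/\pi[\fm^2])^\vee$. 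Note also that $z_jy_j=y_jz_j-h_j$, so $z_jy_j$ lies in $I_D$ as well. We decompose $\gr^0$ into $H$-isotypic pieces: write $v\in\gr^0$ for a vector of eigencharacter $\chi^{-1}$, where $\chi$ occurs in $\pi[\fm]$. Since $H$ acts on $\gr\Lambda$ compatibly, the elements $y_jz_j\cdot v$, $z_jy_j\cdot v$ and $h_j\cdot v$ all have the same $H$-eigencharacter $\chi^{-1}$. This holds because $y_j$ and $z_j$ have opposite eigencharacters $\alpha_j^{\pm1}$, and $h_j=[y_j,z_j]$ has trivial eigencharacter.

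The key step is to show that the $\chi^{-1}$-isotypic part of $\gr^2_\fm\pi^\vee$ vanishes. Dually, we need $[\pi[\fm^3]/\pi[\fm^2]:\chi]=0$. The hypothesis gives $[\pi[\fm^3]:\chi]=[\pi[\fm]:\chi]$. Since $\pi[\fm]\subseteq\pi[\fm^2]\subseteq\pi[\fm^3]$ and Jordan--Hölder multiplicities of $H$-characters are additive, the multiplicity of $\chi$ in both $\pi[\fm^2]/\pi[\fm]$ and $\pi[\fm^3]/\pi[\fm^2]$ is zero. Here we use that $\cO_D^\times$ acts on $\pi[\fm^k]$, since $U_D^1$ is normal, and that $U_D^1$ acts trivially on the graded pieces, so they are semisimple $H=\bF_{q^2}^\times$-modules. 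Admissibility makes these multiplicities finite, which justifies the subtraction. Consequently $\chi^{-1}$ does not occur in $\gr^2_\fm\pi^\vee=(\pi[\fm^3]/\pi[\fm^2])^\vee$, so $y_jz_j v=z_jy_jv=h_jv=0$. Summing over the isotypic components of $\gr^0$ kills all of $\gr^0$, and degree-zero generation then gives that $I_D$ kills $\gr_\fm\pi^\vee$.

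The main point requiring care is the identification of $\gr^k_\fm\pi^\vee$ with $(\pi[\fm^{k+1}]/\pi[\fm^k])^\vee$, compatibly with $H$. This amounts to checking that the annihilator of $\fm^k\pi^\vee$ in $\pi$ is $\pi[\fm^k]$, which is a standard Pontryagin duality argument for finitely generated $\Lambda$-modules. We also need the fact that a character $\chi$ of $\cO_D^\times$ is determined by its restriction to $H$, which holds because $U_D^1$ is pro-$p$ and $\cO_D^\times\cong H\ltimes U_D^1$. Neither the centrality of the $h_j$ nor any genericity hypothesis is needed.
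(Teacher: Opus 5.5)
The paper gives no proof of this proposition. It is recorded in the background section as a known fact, the quaternionic counterpart of the corresponding $\GL_2$ statement, so there is no in-paper argument to compare against.

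Your argument is the standard one and is correct. It uses the same $H$-eigencharacter bookkeeping the paper itself uses to show that $N\to\gr_\fm\pi^\vee$ is well defined. The only difference is that you use the $\chi$-isotypic part of $\pi[\fm^3]/\pi[\fm^2]$ in degree two, rather than the $\chi\alpha_j^{\mp1}$-isotypic parts of $\pi[\fm^2]/\pi[\fm]$ in degree one.

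The one step you should not leave at ``it suffices'' is the reduction to the degree-zero generators. Write $M=\gr_\fm\pi^\vee$ and $M_0$ for its degree-zero part. From $gM_0=0$ for $g\in\{y_jz_j,h_j\}$ and $M=\gr\Lambda\cdot M_0$, you get $I_DM=I_DM_0=0$ only if the two-sided ideal $I_D$ coincides with the left ideal $\sum_j\gr\Lambda\,y_jz_j+\sum_j\gr\Lambda\,h_j$.

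Otherwise you would also have to control elements such as $y_jz_j\,y_k\,v$. Such an element sits in degree three with weight $\chi^{-1}\alpha_k$, and your weight argument, which only sees the degree-two piece, gives no information there.

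The equality does hold. The element $h_j$ is central, and $y_jz_j$ commutes with $y_k,z_k,h_k$ for $k\neq j$. Moreover,
$(y_jz_j)y_j=y_j(y_jz_j)-y_jh_j$ and $(y_jz_j)z_j=z_j(y_jz_j)+z_jh_j$.
So the left ideal is stable under right multiplication by the generators, hence two-sided.

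This is exactly where the relations $[y_j,h_j]=[z_j,h_j]=0$ are used: they let you rewrite $h_ja$ and $h_jz_j$ as left multiples of $h_j$. Your closing remark that centrality of the $h_j$ is not needed should therefore be dropped.

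With this check added, the proof is complete.
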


Write $R=\gr \Lambda/(h_0,\ldots,h_{f-1})=\bF[y_j,z_j:0\leq j\leq f-1]$ and $\overline{R}=\gr \Lambda/I_D=R/(y_jz_j:0\leq j\leq f-1)$. 
For any $\pi$ satisfying the condition above, 
$\gr_\fm\pi^\vee$ can be naturally viewed as an $R$- or $\overline{R}$-module. 

\subsection{Quaternionic Serre weights}\label{quat-serre-weight}

Recall that any smooth irreducible $\GL_2(\cO_K)$-representation over $\bF$ factors through $\GL_2(\bF_q)$, and takes the form 
$\bigotimes_{j=0}^{f-1}(\Sym^{s_j}\bF^2)^{\Fr^j}\otimes\det^{\sum_{j=0}^{f-1}t_jp^j}$ 
with $0\leq s_j,t_j\leq p-1$, where the $t_j$ are not all equal to $p-1$. 
Write $\sigma_{t,s}$ for such a representation, with $s=(s_0,\ldots,s_{f-1})$ and $t=(t_0,\ldots,t_{f-1})$. 
The irreducible smooth $\cO_D^\times$-representations over $\bF$ are much simpler. 
They are just characters $\chi:\cO_D^\times\to\bF^\times$, which factor through $\cO_D^\times/U_D^1=\bF_{q^2}^\times$. 

Let $\bar{\rho}$ be a continuous Galois representation as in (\ref{red-galois-rep}) or (\ref{irr-galois-rep}) which is $0$-generic. 
We have a set $W_{\GL_2}(\bar{\rho})$ of Serre weights, consisting of irreducible representations of $\GL_2(\bF_q)$ over $\bF$, which is described in \cite[\S~11]{BP12}. 
Moreover, we have a set $W_D(\bar{\rho})$ of predicted quaternionic Serre weights for $\bar{\rho}$, consisting of irreducible representations of $\cO_D^\times$. 
It is described in \cite{CW23}, as we now explain. 

Recall that a character $\chi: \bF_{q^2}^\times\to\bF^\times$ is called of type I if it does not factor through the norm map $\Nm_{\bF_{q^2}/\bF_{q}}:\bF_{q^2}^\times\to\bF_{q}^\times$. 
For a type I character $\chi$, 
let $[\chi]: \bF_{q^2}^\times\to \cO_E^\times$ be its Teichm\"uller lift 
and let $\Theta([\chi])$ be the cuspidal type, an irreducible $E$-representation of $\GL_2(\bF_{q})$, as in \cite[\S~1]{Di07}. 

\begin{proposition}
    $W_D(\bar{\rho})$ consists of those type I characters $\chi: \bF_{q^2}^\times\to\bF^\times$ such that 
    $\JH(\overline{\Theta([\chi])})\cap W_{\GL_2}(\bar{\rho})\ne \emptyset$, where $\overline{\Theta([\chi])}$ is the mod $\varpi_{E}$ reduction of any $\GL_2(\bF_q)$-stable $\cO_E$-lattice in $\Theta([\chi])$. 
\end{proposition}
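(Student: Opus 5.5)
This proposition is essentially a reformulation of the definition in \cite{CW23}, so the plan is to recall that definition and check it agrees with the stated one. In \cite{CW23}, following the Buzzard--Diamond--Jarvis style recipe for $D^\times$ (cf.\ also Gee--Savitt's quaternionic Serre weights), a character $\chi$ of $\bF_{q^2}^\times$ lies in $W_D(\bar{\rho})$ when $\bar{\rho}$ has a potentially Barsotti--Tate lift of discrete series inertial type $\tau$ whose Jacquet--Langlands transfer is compatible with $\chi$. We will show this is equivalent to the condition $\JH(\overline{\Theta([\chi])})\cap W_{\GL_2}(\bar{\rho})\ne\emptyset$.

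\textbf{Step 1: the type I restriction.} For a non-type I character $\chi=\psi\circ\Nm$, the corresponding representation of $\GL_2(\bF_q)$ is not cuspidal, and there is no discrete series type attached. We check directly from the recipe in \cite{CW23} that such $\chi$ never occur. This uses that $\bar{\rho}$ is $0$-generic, so that no scalar-type lift can contribute.

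\textbf{Step 2: the main equivalence.} For type I $\chi$, let $\tau_\chi=\omega_{2f}^{\tilde\chi}\oplus\omega_{2f}^{q\tilde\chi}$ be the associated non-scalar tame inertial type. By the inertial Langlands and Jacquet--Langlands correspondences, $\Theta([\chi])$ is the $\GL_2(\cO_K)$-type $\sigma(\tau_\chi)$. By the Breuil--M\'ezard-type results of Gee--Kisin (or Diamond's computation in \cite{Di07} combined with \cite{BP12}), $\bar{\rho}$ has a potentially Barsotti--Tate lift of type $\tau_\chi$ if and only if some Jordan--Hölder factor of $\overline{\sigma(\tau_\chi)}=\overline{\Theta([\chi])}$ lies in $W_{\GL_2}(\bar{\rho})$. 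On the $D^\times$ side, the type $\tau_\chi$ is matched with the $\cO_D^\times$-representation induced from $[\chi]$ on the unramified part. Its reduction has constituents $\chi$ and $\chi^q$, corresponding to the two orientations, and \cite{CW23} selects the one compatible with the fixed embedding $\iota$. Combining these gives the equivalence.

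\textbf{Main obstacle.} The hard part is the normalization bookkeeping: matching the fixed embedding $\iota$ and the sign conventions of $\Theta$ in \cite{Di07} with those in \cite{CW23}. In particular we must confirm that the criterion singles out $\chi$ itself rather than $\chi^q$. First check that the condition is invariant under $\chi\mapsto\chi^q$, using $\Theta([\chi])\cong\Theta([\chi]^q)$. If it is, the orientation issue disappears, since \cite{CW23} includes both conjugates. Once this symmetry is verified, the proposition follows by citing \cite{CW23}.
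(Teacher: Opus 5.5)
Your proposal ends where the paper's proof does: the paper's entire proof is the citation \cite[Prop.~2.4]{CW23}, so your argument is acceptable as it stands, and the surrounding unpacking is optional. If you keep that unpacking, fix two points. First, a non-type I character $\psi\circ\Nm$ does have a discrete-series partner: under Jacquet--Langlands, the characters $\psi'\circ\Nrd$ of $D^\times$ containing it go to twisted Steinberg representations, with scalar inertial type and $N\neq 0$, and $\psi\circ\Nm$ is excluded only because such lifts are ordinary with $\bar{\rho}|_{I_K}^{ss}\cong\psi\omega\oplus\psi$, which genericity forbids. Second, Section~\ref{sec-5} of the paper indicates that \cite[\S~2.3]{CW23} defines $W_D(\bar{\rho})$ via modularity, as $\{\chi:\Hom_{\cO_D^\times}(\chi,\pi)\neq 0\}$, so the link to potentially Barsotti--Tate lifts is a theorem of Gee--Savitt type rather than a definition.
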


\begin{proof}
    This is \cite[Prop.~2.4]{CW23}.
\end{proof}

Moreover, we can give an explicit description of $W_{D}(\bar{\rho})$ as follows. 
If $\bar{\rho}$ is irreducible, then $W_D(\bar{\rho})$ is in bijection with the set of pairs $(w,d)$ with $w\in \{0,1\}^f$ and $d\in\{0,\pm 1\}^f$ satisfying the following relations: 

\begin{enumerate}[label={[\Roman*]},ref={[\Roman*]}]
    \item\label{RI}
    For $j>0$, $w_j=\begin{cases}
        1, \quad & d_j=-1,\\
        0, \quad & d_j=1,
    \end{cases}$ and if $d_j=0$, then $(w_{j-1},w_j)=(0,0)$ or $(1,1)$.
  
    \item\label{RII}
    $(w_{f-1}, w_{0})=\begin{cases}
        (1,1), \quad & d_0=-1,\\
        (0,0), \quad & d_{0}=1.
    \end{cases}$
  \end{enumerate}

The bijection is given as follows. After twisting by a character, write 
$$\bar{\rho}|_{I_K}=\left(\begin{array}{cc}
    \omega_{2f}^{\sum_{j=0}^{f-1}p^j(r_j+1)}&0\\
    0&  \omega_{2f}^{q\sum_{j=0}^{f-1}p^j(r_j+1)}\\
\end{array}\right).$$ 
Then $(w,d)$ corresponds to the character $\chi_{w,d}:=\iota^{\sum_{j=0}^{f-1}q^{w_j}p^jr_j+(1-q)\sum_{j=0}^{f-1}p^jd_j}$.

If $\bar{\rho}$ is split reducible, then $W_D(\bar{\rho})$ is in bijection with the set of pairs $(w,d)$ with $w\in \{0,1\}^f$ and $d\in\{0,\pm 1\}^f$ satisfying Relation~\ref{RI} and the following relation: 

\begin{enumerate}[label={[II$^\prime$]},ref={[II$^\prime$]}]
    \item\label{RII'}
$w_0=\begin{cases}
    1, \quad & d_0=-1,\\
    0, \quad & d_0=1, 
\end{cases}$ and if $d_0=0$, then $(w_{f-1}, w_{0})=(0,1)$ or $(1,0)$. 
  \end{enumerate}

The bijection is given as follows: Write 
$$\bar{\rho}|_{I_K}=\left(\begin{array}{cc}
    \omega_{f}^{\sum_{j=0}^{f-1}p^j(r_j+1)}&0\\
    0&1\\
\end{array}\right).$$ 
Then $(w,d)$ corresponds to the character $\chi_{w,d}=\iota^{\sum_{j=0}^{f-1}q^{w_j}p^jr_j+(1-q)\sum_{j=0}^{f-1}p^jd_j}$. 

If $\bar{\rho}$ is reducible non-split, the description is more involved. We need to use the explicit descriptions of $\JH(\overline{\Theta([\chi])})$ and $W_{\GL_2}(\bar{\rho})$, which we now recall. 

$\JH(\overline{\Theta([\chi])})$ can be parametrized by $u\in \{0,1\}^f$ as in \cite[\S~2.2]{CW23}. Write $\chi=\iota^{(q+1)b+1+c}$ and write $c=\sum_{j=0}^{f-1}c_jp^j$. 
Assume that $\chi$ is generic in the sense that $1\leq c_j\leq p-2$ for each $j$ (we remark that each $\chi\in W_D(\bar{\rho})$ is generic if $\bar{\rho}$ is $1$-generic). Let 
\begin{equation}
    s_{u,j}=\begin{cases}
        p-2+u^{(0)}_{j-1}-c_j, \quad & u_j=1,\\
        c_j-u^{(0)}_{j-1},\quad & u_j=0. 
    \end{cases},
\end{equation}

\begin{equation}
    t_{u,j}=\begin{cases}
        c_j+1-u_{j-1}\quad & u_j=1,\\
        0\quad & u_j=0.
    \end{cases},
\end{equation}
where $u^{(0)}\in\{0,1\}^f$ is defined by $u^{(0)}_j=u_j$ for $0\leq j<f-1$ and $u^{(0)}_{f-1}=1-u_{f-1}$. Let 
\begin{equation}
    \overline{\Theta([\chi])}_u=\sigma_{t,s}\otimes \det\nolimits^{b+u_0u_{f-1}+(1-u_0)(1-u_{f-1})}.
\end{equation}
Then we have $\JH(\overline{\Theta([\chi])})=\{\overline{\Theta([\chi])}_u: u\in\{0,1\}^f\}$. 

Next, $W_{\GL_2}(\bar{\rho})$ can be parametrized by $v\in \{0,1\}^f$ as follows. 
When $\bar{\rho}$ is split reducible, 
$W_{\GL_2}(\bar{\rho})$ is in bijection with a set $\cR\cD(x_0,\ldots,x_{f-1})$ of $f$-tuples as in \cite[\S~11]{BP12}. 
By \cite[\S~2.4.1]{CW23}, 
an $f$-tuple $\lambda\in \cR\cD(x_0,\ldots,x_{f-1})$ can be identified with a $v\in\{0,1\}^f$. 
We write $\sigma_v(\bar{\rho})$ for the Serre weight in $W_{\GL_2}(\bar{\rho})$ corresponding to $v$. 
When $\bar{\rho}$ is reducible non-split, we can define an $f$-tuple $v(\bar{\rho})\in \{0,1\}^f$ and we have 
$$W_{\GL_2}(\bar{\rho})=\{\sigma_v(\bar{\rho}):v\leq v(\bar{\rho})\}.$$
Here $v\leq v'$ means $v_j\leq v'_j$ for each $0\leq j\leq f-1$. 

Moreover, by \cite[Lemma~3.1]{CW23}, for each $u,v\in\{0,1\}^f$, there is a unique $\chi\in W_D(\bar{\rho}^{ss})$ such that 
$\overline{\Theta([\chi])}_u=\sigma_v(\bar{\rho}^{ss})$. 
In particular, $(w,d)$ can be uniquely determined by $(u,v)$ as follows.

\begin{equation}\label{w_i}
    w_i\equiv u_i+v_i\mod2,
\end{equation}

For $j>0$:

\begin{table}[H]
    \centering
    \caption{}
    \label{table1} 
    \begin{tabular}{|c|c|c|c|c|c|}
        \hline
        \multicolumn{2}{|c|}{\multirow{2}{*}{$d_j$ $(j>0)$}} & \multicolumn{4}{c|}{$(u_{j-1}, u_j)$} \\ \cline{3-6} 
        
        \multicolumn{2}{|c|}{} & $(0,0)$ & $(0,1)$ & $(1,0)$ & $(1,1)$ \\ \hline
        
        \multirow{4}{*}{$(v_{j-1}, v_j)$} & $(0,0)$ & 0 & $-1$ & 1 & 0 \\ \cline{2-6} 
                                          & $(0,1)$ & $-1$ & 0 & 0 & 1 \\ \cline{2-6} 
                                          & $(1,0)$ & 1 & $-1$ & 1 & $-1$ \\ \cline{2-6} 
                                          & $(1,1)$ & $-1$ & 1 & $-1$ & 1 \\ \hline
    \end{tabular}
\end{table}

For $j=0$:
\begin{table}[H]
    \centering
    \caption{}
    \label{table2}
    \begin{tabular}{|c|c|c|c|c|c|}
        \hline
        \multicolumn{2}{|c|}{\multirow{2}{*}{$d_0$}} & \multicolumn{4}{c|}{$(u_{f-1}, u_0)$} \\ \cline{3-6} 
        
        \multicolumn{2}{|c|}{} & $(0,0)$ & $(0,1)$ & $(1,0)$ & $(1,1)$ \\ \hline
        
        \multirow{4}{*}{$(v_{f-1}, v_0)$} & $(0,0)$ & 1 & 0 & 0 & $-1$ \\ \cline{2-6} 
                                          & $(0,1)$ & 0 & 1 & $-1$ & 0 \\ \cline{2-6} 
                                          & $(1,0)$ & 1 & $-1$ & 1 & $-1$ \\ \cline{2-6} 
                                          & $(1,1)$ & $-1$ & 1 & $-1$ & 1 \\ \hline
    \end{tabular}
\end{table}

\begin{remark}
    When $\bar{\rho}$ is irreducible, there is still a similar table, but we do not need it. 
\end{remark}

Since $W_{\GL_2}(\bar{\rho})$ is the subset of $W_{\GL_2}(\bar{\rho}^{ss})$ defined by the condition $v\leq v(\bar{\rho})$, we know that $W_{\GL_2}(\bar{\rho})$ forms a subcube of $W_{\GL_2}(\bar{\rho}^{ss})$. 
Moreover, for $\chi\in W_{D}(\bar{\rho}^{ss})$, $\JH(\overline{\Theta([\chi])})\cap W_{\GL_2}(\bar{\rho}^{ss})$ is also a subcube of $W_{\GL_2}(\bar{\rho}^{ss})$, 
parametrized by $V_\chi=\{v:\sigma_v(\bar{\rho}^{ss})\in\JH(\overline{\Theta([\chi])})\}$. 
We denote by $v_{\chi}$ the minimal element of $V_{\chi}$, which is unique since $V_\chi$ is a subcube. 
Then $W_D(\bar{\rho})$ can be described as $\{\chi\in W_D(\bar{\rho}^{ss}): v_{\chi}\leq v(\bar{\rho})\}$. 

\begin{remark}\label{cent-char}
    If $x\in Z_D\cap \cO_D^\times=\cO_K^\times$, 
    we have $\bar{x}^{q-1}=1$ and hence 
    $\chi(x)=\kappa(\bar{x})^{\sum_{j=0}^{f-1}p^jr_j}$. 
    We conclude that each $\chi\in W_D(\bar{\rho})$ has the same central character $\eta_c:=\kappa^{\sum_{j=0}^{f-1}p^jr_j}$,  
    corresponding to $\omega_f^{\sum_{j=0}^{f-1}p^jr_j}=\det(\bar{\rho})\omega^{-1}$ via class field theory. 
\end{remark}

\subsection{Some facts about filtered and graded modules}
We collect some facts which we will use later. 
Most of the results in this subsection are similar to those in \cite[\S~2.2]{BHH$^+$25c}. 

In this article, filtrations are always assumed to be increasing. In particular, 
the $\fm$-adic filtration on $\Lambda$ is given by 
$F_n\Lambda=\fm^{-n}$ for $n<0$ and $F_b\Lambda=\Lambda$ for $n\geq 0$.  
We say that a finitely generated filtered $\Lambda$-module $L$ is filt-free if $L\cong \bigoplus_{i=1}^{k}\Lambda(-k_i)$ for some integers $k_i$. 

\begin{lemma}\label{lemma-of-filt}
    Let $L$ be a filt-free $\Lambda$-module with compatible $H$-action. 
    Assume that $L=L'\oplus L''$ as filtered $\Lambda$-modules with compatible $H$-actions and that the following properties hold: 

    (1) As filtered $\Lambda$-modules we have 
    $$L'\cong \bigoplus_{i=1}^m\Lambda(-k_i),\quad L''\cong \bigoplus_{j=m+1}^{n}\Lambda(-l_j),$$
    with $k_i\geq l_j$ for any pair $(i,j)$. 

    (2) As $H$-modules, $\JH(\bF\otimes_\Lambda L')\cap \JH(\bF\otimes_\Lambda L'')=\emptyset$. 

    Moreover, assume that $P$ is an $H$-stable direct summand of $L$ such that 
    the composition 
    $$\bF\otimes_\Lambda P\to\bF\otimes_\Lambda L\to\bF\otimes_\Lambda L'$$
    is an isomorphism. Then $P$ is filt-free with the induced filtration, and we have 
    $\gr P=\gr L'$ inside $\gr L$.  
\end{lemma}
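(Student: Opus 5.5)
The natural route is to compare $P$ with $L'$ through the projection $p':L\to L'$ along $L''$, which is a filtered map commuting with $H$ and $\Lambda$.

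\emph{Step 1: $p'|_P$ is an isomorphism.} The composite $\bF\otimes_\Lambda P\to\bF\otimes_\Lambda L'$ is $\bF\otimes_\Lambda(p'|_P)$ and is an isomorphism by hypothesis. Since $L'$ is finitely generated over the local ring $\Lambda$, Nakayama gives that $p'|_P:P\to L'$ is surjective. Because $L'$ is free, this surjection splits, so $P\cong L'\oplus Q$ with $\bF\otimes_\Lambda Q=0$. Now $Q$ is a direct summand of $P$, and $P$ is a direct summand of $L$, so $Q$ is finitely generated and therefore $Q=0$. Hence $p'|_P:P\to L'$ is an isomorphism of $\Lambda$-modules with $H$-action. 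Its inverse has the form $x\mapsto x+\phi(x)$ for a unique $H$-equivariant $\Lambda$-linear map $\phi:L'\to L''$, so $P=\{x+\phi(x):x\in L'\}$ is the graph of $\phi$.

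\emph{Step 2: $\phi(F_nL')\subseteq F_{n-1}L''$, i.e.\ $\phi$ strictly lowers the filtration.} Choose $H$-eigen bases $e_i$ of $L'$ of degree $k_i$. This is possible because $H$ has order prime to $p$, so one can lift an $H$-eigenbasis of $\bF\otimes_\Lambda L'$ by averaging. It suffices to show $\phi(e_i)\in F_{k_i-1}L''$, since $\Lambda$-linearity then gives the general case (filtrations on filt-free modules being the obvious ones). Write $\phi(e_i)=\sum_j\lambda_{ij}f_j$ with $f_j$ basis vectors of $L''$ of degree $l_j$, chosen as $H$-eigenvectors as well.

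If $\lambda_{ij}$ is a unit, reduce mod $\fm$. Then $\bF\otimes\phi$ sends the eigenline of $\bar e_i$ nontrivially into the $\bar f_j$-components, and by $H$-equivariance the character of $\bar e_i$ occurs in $\JH(\bF\otimes_\Lambda L'')$. This contradicts (2). Thus $\bF\otimes\phi=0$, i.e.\ all $\lambda_{ij}\in\fm=F_{-1}\Lambda$. Consequently $\lambda_{ij}f_j\in F_{l_j-1}L''\subseteq F_{k_i-1}L''$, using $l_j\le k_i$ from (1).

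This is the key step. The ordering in (1) handles the degrees, and (2) kills the degree-zero part of $\phi$. I expect this to be the main point needing care, namely making the $H$-eigenbasis argument precise so that the vanishing of $\bF\otimes\phi$ follows cleanly from disjointness of Jordan--Hölder factors. An alternative is to argue directly that any $H$-equivariant map $\bF\otimes L'\to\bF\otimes L''$ is zero by Schur's lemma, since $H$ is abelian of order prime to $p$ and these modules are semisimple.

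\emph{Step 3: conclusion.} Equip $P$ with the induced filtration $F_nP=P\cap F_nL$, where $F_nL=F_nL'\oplus F_nL''$. For $x\in L'$ we have $x+\phi(x)\in F_nL$ if and only if $x\in F_nL'$: the forward direction holds because the components are independent, and the backward direction follows from Step 2. Hence $p'|_P:P\to L'$ is a filtered isomorphism, so $P$ is filt-free.

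Moreover, for $x\in F_nL'$ the symbol of $x+\phi(x)$ in $\gr_nL=\gr_nL'\oplus\gr_nL''$ equals the symbol of $x$, because $\phi(x)\in F_{n-1}$. Therefore $\gr P=\gr L'$ inside $\gr L$.
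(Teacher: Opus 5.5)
Your proof is correct and follows the standard argument behind [BHH$^+$25c, Lemma~2.2.3], which the paper invokes: $P$ is the graph of an $H$-equivariant map $\phi:L'\to L''$, condition (2) forces $\phi(L')\subseteq\fm L''$, and condition (1) upgrades this to $\phi(F_nL')\subseteq F_{n-1}L''$. The $H$-eigen filtered basis in Step 2 is unnecessary, and lifting an eigenbasis of $\bF\otimes_\Lambda L'$ alone does not ensure the lifts sit in the right filtration steps; your Schur-lemma alternative gives $\bF\otimes_\Lambda\phi=0$ directly, after which any filtered basis of $L'$ works because $\fm L''\subseteq F_{\max_j l_j-1}L''\subseteq F_{k_i-1}L''$.
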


\begin{proof}
    This is the same as \cite[Lemma~2.2.3]{BHH$^+$25c}; the proof uses only the fact that $\Lambda$ is a noetherian local filtered ring. 
\end{proof}

\begin{lemma}\label{inj-of-resolution}
    Let $\phi:P\to L$ be a morphism between two finite free $\Lambda$-modules. If $\bar{\phi}: \bF\otimes_{\Lambda} P\to \bF\otimes_{\Lambda} L$ is injective, 
    then $\phi$ is also injective and identifies $P$ with a direct summand of $L$. 

    The same statement holds if $P$ and $L$ are two finite gr-free $\gr \Lambda$-modules and $\phi$ is a graded morphism. 
\end{lemma}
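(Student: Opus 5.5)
We use Nakayama's lemma and treat the two parts of the statement separately.

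\emph{Ungraded case.} Let $\phi:P\to L$ be a morphism of finite free $\Lambda$-modules, and suppose $\bar\phi:\bF\otimes_\Lambda P\to\bF\otimes_\Lambda L$ is injective. Let $e_1,\dots,e_m$ be a basis of $P$. Their images $\bar\phi(\bar e_i)$ are linearly independent in $\bF\otimes_\Lambda L=L/\fm L$, so we can complete them by elements $\bar f_{m+1},\dots,\bar f_n$ to a basis of $L/\fm L$. Lift the $\bar f_j$ to elements $f_j\in L$ and define
$$\psi:P\oplus\Lambda^{n-m}\to L,\qquad (x,(a_j))\mapsto \phi(x)+\sum_j a_jf_j.$$
Then $\psi$ is surjective modulo $\fm$. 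Since $\Lambda$ is a local ring and $L$ is finitely generated, Nakayama's lemma shows that $\psi$ is surjective. Its source and target are free of the same rank $n$ ($n=\rk L=\dim_\bF L/\fm L$). A surjective endomorphism of a finitely generated module over a commutative ring is an isomorphism, and surjective maps $\Lambda^n\to\Lambda^n$ are also isomorphisms for the noncommutative $\Lambda$. Indeed, a surjection onto a free module splits, so $\Lambda^n\cong\Lambda^n\oplus K$. Reducing mod $\fm$ gives $K/\fm K=0$, and $K$ is finitely generated as a direct summand, so $K=0$ by Nakayama. Hence $\psi$ is an isomorphism. Therefore $\phi=\psi|_P$ is injective, and $L=\phi(P)\oplus\psi(\Lambda^{n-m})$, so $\phi(P)$ is a direct summand.

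\emph{Graded case.} The argument is identical, replacing Nakayama's lemma for the local ring by the graded Nakayama lemma for $\gr\Lambda$. This ring is nonnegatively graded, with $(\gr\Lambda)_0=\bF$ and augmentation ideal $\gr_+\Lambda$, and the modules are finitely generated and hence bounded below in degree.

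Choose the $\bar f_j$ to be homogeneous, which is possible because $\bF\otimes_{\gr\Lambda}L$ is graded and $\bar\phi$ is a graded map. Lift them to homogeneous $f_j$ and map shifted copies $\gr\Lambda(-\deg f_j)$ to $L$. The graded map $\psi$ is then surjective by graded Nakayama. Its kernel $K$ is again a graded direct summand of the gr-free source, because the target is gr-free and the splitting can be chosen graded. So $K$ is finitely generated with $K/\gr_+K=0$, and graded Nakayama gives $K=0$.

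The only point needing care is this rank/kernel argument in the noncommutative setting; the splitting argument above handles it.
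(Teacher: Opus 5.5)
Your proof is correct. Completing $\bar\phi(\bF\otimes P)$ to a basis, lifting, applying Nakayama, and then killing the kernel via the splitting and a dimension count (and doing the same homogeneously in the graded case) is the standard argument; the paper gives no proof here and only cites [BHH$^+$25c, Lemma~2.2.6]. One cosmetic slip: under the paper's increasing-filtration convention, $\gr\Lambda$ is concentrated in degrees $\le 0$ (for example, $y_j,z_j$ sit in degree $-1$), so finitely generated graded modules are bounded above rather than below, but graded Nakayama goes through verbatim.
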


\begin{proof}
    See \cite[Lemma~2.2.6]{BHH$^+$25c}.
\end{proof}

\begin{lemma}\label{truncation}
    Suppose that $n\in\bZ$, $i\geq 0$, and that $N$ is supported in degree $0$. 

    (1) We have a canonical isomorphism $(N\otimes_R M)_{\geq n}\cong N\otimes_{R}(M_{\geq n})$ of graded abelian groups. 

    (2) If $M\to M'$ is a morphism in the category of graded $R$-modules inducing an isomorphism $M_{\geq n}\xrightarrow{\sim}M'_{\geq n}$, then the natural map $\Tor^R_i(N,M)_{\geq n}\to \Tor^R_i(N,M')_{\geq n}$ of graded abelian groups is an isomorphism. 
\end{lemma}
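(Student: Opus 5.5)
Both parts are elementary facts about graded modules over a graded ring $R$, where $N$ is concentrated in degree $0$. So I will argue directly from the definitions. The key observation is that tensoring with a degree-$0$ module does not shift degrees.

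\emph{Part (1).} Write $M=M_{<n}\oplus M_{\geq n}$ as graded abelian groups; this is not a decomposition of $R$-modules. The natural map $N\otimes_R M_{\geq n}\to N\otimes_R M$ lands in degrees $\geq n$, since $N$ sits in degree $0$. To see it is an isomorphism onto $(N\otimes_R M)_{\geq n}$, I will use the presentation $N\otimes_R M=(N\otimes_{\bZ}M)/\langle xr\otimes m-x\otimes rm\rangle$.

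\begin{itemize}
\item $N\otimes_{\bZ}M$ is graded with degree-$k$ piece $N_0\otimes M_k$.
\item The relations are generated by homogeneous elements $xr\otimes m-x\otimes rm$ with $r\in R$ homogeneous.
\item Since $N=N_0$, we have $xr=0$ unless $\deg r=0$. So the relations in degree $k$ are $xr\otimes m-x\otimes rm$ with $\deg r+\deg m=k$, and either $\deg r=0$ or the first term vanishes.
\end{itemize}

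Taking the degree-$\geq n$ part of the quotient, the generators involve only $m\in M_{\geq n}$, or $m$ of lower degree with $rm\in M_{\geq n}$ and $xr=0$. Relations of the second kind simply say $x\otimes rm=0$ for $\deg r>0$. The same relations hold in $N\otimes_R M_{\geq n}$, because there $x\otimes rm'=xr\otimes m'=0$ for any $m'$ of degree $\geq n$ with $\deg r>0$.

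Alternatively, and more cleanly, $N$ is a module over $R_0$ killed by $R_+$, and $R_+$ acts on $N$ by zero. Hence
$$N\otimes_R M\cong N\otimes_{R_0}(M/R_+M)$$
degreewise, and $(M/R_+M)_k=M_k/\sum_{j>0}R_jM_{k-j}$. For $k\geq n$, passing from $M$ to $M_{\geq n}$ changes the denominator from $\sum_{j>0}R_jM_{k-j}$ to $\sum_{0<j\leq k-n}R_jM_{k-j}$. This is exactly the discrepancy to control: it requires the terms $R_jM_{k-j}$ with $k-j<n$ to be accounted for. I will check this carefully against the intended convention; I expect the grading convention (for instance $R$ concentrated in non-positive degrees, matching the filtration $F_n\Lambda=\fm^{-n}$) makes the sum over $j$ fall on the correct side. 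Granting the convention in which multiplication by $R_{\neq 0}$ moves degrees \emph{away} from the range $\geq n$, the canonical map is an isomorphism degreewise for $k\geq n$.

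\emph{Part (2).} Take a graded free resolution $F_\bullet\to N$. By balance of $\Tor$, $\Tor_i^R(N,M)=H_i(N\otimes_R G_\bullet)$ with $G_\bullet$ a graded free resolution of $M$. Better still, I will use $\Tor_i^R(N,M)=H_i(F_\bullet\otimes_R M)$. Taking the degree-$\geq n$ part is exact on graded abelian groups, so
$$\Tor_i^R(N,M)_{\geq n}=H_i\bigl((F_\bullet\otimes_R M)_{\geq n}\bigr).$$

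To make part (1) applicable termwise, I choose a minimal resolution of $N$ by copies of $R(-a)$ shifted in the appropriate direction. With the convention above, the $\Tor$ computation can then be reduced via part (1) to the complex $N\otimes_R$(resolution of $M_{\geq n}$), using the short exact sequence $0\to M_{\geq n}\to M\to M/M_{\geq n}\to 0$. The claim then amounts to $\Tor_i^R(N,Q)_{\geq n}=0$ for $Q$ concentrated in degrees $<n$. I will prove this by using a resolution of $Q$ by graded free modules generated in degrees $<n$, which the grading convention makes possible, and tensoring with $N$ in degree $0$, giving terms concentrated in degrees $<n$.

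Applying this to the kernel and cokernel of $M\to M'$, both concentrated in degrees $<n$, together with the long exact sequences and the five lemma, gives the isomorphism in degrees $\geq n$. The main obstacle is bookkeeping of the grading convention, namely in which direction $R_+$ shifts degrees. Everything hinges on the relevant resolutions staying in degrees $<n$.
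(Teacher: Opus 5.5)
\textbf{The gap: the grading convention is never fixed.} The whole content of this lemma is the direction of the grading, and that is exactly the point you leave open (``I expect'', ``granting the convention''). Where you do commit, you use the wrong convention.

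In the paper the relevant rings are concentrated in non-positive degrees. The degree $-k$ piece of $\gr\Lambda$ is $\fm^k/\fm^{k+1}$, so $y_j,z_j$ sit in degree $-1$, and the lemma is applied to $\gr\Lambda$ in the proof of Proposition~\ref{tor-inj}. Consequently $M_{<n}$ is a graded submodule of $M$, and $M_{\geq n}=M/M_{<n}$ is a \emph{quotient}. The canonical map in (1) is therefore $N\otimes_R M\to N\otimes_R M_{\geq n}$, induced by the projection.

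Your first argument for (1) treats $M_{\geq n}$ as a submodule, and its key step fails. If $\deg m<n$, $\deg r>0$ and $rm\in M_{\geq n}$, nothing forces $x\otimes rm$ to vanish in $N\otimes_R M_{\geq n}$, because $rm$ need not be of the form $rm'$ with $m'\in M_{\geq n}$. In fact, for non-negatively graded $R$ the statement is false. Take $R=\bF[x]$ with $\deg x=1$, $N=R/xR$, $M=R$, $n=1$. Then $(N\otimes_R M)_{\geq 1}=0$, while $N\otimes_R xR\cong\bF$ sits in degree $1$.

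The same example breaks your (2) in that convention. $\Tor_1^R(\bF,\bF)$ lives in degree $1$, so the vanishing of $\Tor_i^R(N,Q)_{\geq n}$ for $Q$ concentrated in degrees $<n$ fails. Also, in the correct convention the sequence $0\to M_{\geq n}\to M\to M/M_{\geq n}\to 0$ that you invoke is not a sequence of $R$-modules; the right one is $0\to M_{<n}\to M\to M_{\geq n}\to 0$.

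\textbf{The fix.} Once the convention is fixed, your ingredients go through.

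For (1), replace $R_+$ by $R_{<0}$.
\begin{itemize}
\item Since $N=N_0$ is killed by the two-sided ideal $R_{<0}$, we have $N\otimes_R M\cong N\otimes_{R_0}(M/R_{<0}M)$.
\item The relations $(R_{<0}M)_k=\sum_{j<0}R_jM_{k-j}$ involve only $M_{>k}$. For $k\geq n$ these pieces are the same in $M$ and in $M_{\geq n}$.
\item $N\otimes_R M_{\geq n}$ vanishes in degrees $<n$.
\end{itemize}
Together these give the isomorphism in (1).

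For (2), your last paragraph is then a complete argument on its own.
\begin{itemize}
\item The kernel and cokernel of $M\to M'$ are concentrated in degrees $<n$.
\item Such a $Q$ has a graded free resolution by sums of $R(-a)$ with $a<n$. Each $R(-a)$ is concentrated in degrees $\leq a$ because $R$ is non-positively graded, so kernels stay in degrees $<n$ and the construction continues.
\item Hence $N\otimes_R F_\bullet$, and therefore $\Tor_i^R(N,Q)$, is concentrated in degrees $<n$.
\item The long exact sequences for $0\to\ker\to M\to\mathrm{im}\to 0$ and $0\to\mathrm{im}\to M'\to\mathrm{coker}\to 0$ then give the isomorphism in degrees $\geq n$.
\end{itemize}
The detour through a resolution of $N$ and through $M_{\geq n}$ should simply be dropped.
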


\begin{proof}
    See \cite[Lemma~2.2.7]{BHH$^+$25c}. 
\end{proof}

Now we collect some facts about homological algebra. 

Let $M$ be a finitely generated $\Lambda$-module and let $P_\bullet$ be a free resolution of $M$. 
We say that $P_\bullet$ is minimal if the transition maps in the induced complex $\bF\otimes_\Lambda P_\bullet$ are all zero. 
This is equivalent to the condition that $\dim_\bF \Tor_i^\Lambda(\bF,M)=\rk_\Lambda(P_i)$ for each $i\geq 0$. 
We say that $P_\bullet$ is finite if each $P_i$ is finitely generated. 
Similarly, let $N$ be a finitely generated $\gr \Lambda$-module and let $G_\bullet$ be a gr-free resolution of $N$. 
We say that $G_\bullet$ is minimal if the transition maps in the induced complex $\bF\otimes_{\gr \Lambda} G_\bullet$ are all zero, which is equivalent to $\dim_\bF \Tor_i^{\gr \Lambda}(\bF,N)=\rk_{\gr \Lambda}(G_i)$ for each $i\geq 0$. 

Recall that a filtration $F$ on $M$ is called good if there exist $m_1,\ldots,m_s\in M$ and $k_1,\ldots,k_s\in\mathbb{Z}$ such that 
$F_nM=\sum_{i=1}^sF_{n-k_i}\Lambda\cdot m_i$ for each $n\in\bZ$.

\begin{proposition}\label{properties-of-resolution}
    Let $M$ (resp. $N$) be a finitely generated $\Lambda$ (resp. $\gr \Lambda$)-module. 

    (1) The minimal finite free (resp. gr-free) resolutions of $M$ (resp. $N$) always exist. 

    (2) If $M$ (resp. $N$) carries a compatible $H$-action, then we can make the minimal free resolutions above carry a compatible $H$-action as well. 

    (3) Suppose that $M$ carries a good filtration and let $P_\bullet$ be a minimal free resolution of $M$. Then we can always give a good filtration on each $P_i$ so that each $P_i$ is filt-free and each transition map is a filtered morphism. 
    In particular, we can give each $\Tor_i^\Lambda(\bF,M)$ a natural good filtration induced from the filtration on $\bF\otimes P_\bullet$. 

    (4) Suppose that $M$ carries a good filtration and let $G_\bullet$ be a minimal gr-free resolution of $\gr M$. Then we can lift $G_\bullet$ to a strict finite filt-free resolution $P_\bullet$ of $M$ in the sense that $\gr P_\bullet=G_\bullet$. 
\end{proposition}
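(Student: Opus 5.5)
The plan is to treat the four parts in turn, using only that $\Lambda$ (resp. $\gr \Lambda$) is a noetherian complete local (resp. positively graded connected) ring with residue field $\bF$, and that $H$ has order prime to $p$.

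\textbf{Part (1).} Here I would use the usual Nakayama construction.
\begin{itemize}
\item Choose $m_1,\ldots,m_s\in M$ whose images form a basis of $\bF\otimes_\Lambda M=M/\fm M$.
\item By Nakayama's lemma (valid since $\Lambda$ is local and $M$ is finitely generated), these elements generate $M$. This gives a surjection $P_0=\Lambda^{s}\to M$ that is an isomorphism after applying $\bF\otimes_\Lambda-$.
\item The kernel is finitely generated because $\Lambda$ is noetherian, so we can iterate.
\item Minimality holds because each $\bF\otimes P_{i+1}\to\bF\otimes P_i$ has image in $\fm P_i/\fm P_i=0$.
\end{itemize}
In the graded case $N$ is finitely generated over $\gr\Lambda$, and $\gr\Lambda$ is noetherian, being an iterated polynomial-type algebra. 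We choose homogeneous elements lifting a homogeneous basis of $N/(\gr\Lambda)_{>0}N$. The graded Nakayama lemma applies because $N$ is bounded below.

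\textbf{Part (2).} Since $|H|$ divides $q^2-1$, which is prime to $p$, the category of $\bF[H]$-modules is semisimple. The $H$-stable subspace $\fm M\subset M$ therefore has an $H$-stable complement modulo any $\fm^nM$. More concretely, $M/\fm M$ decomposes into $H$-eigenlines.
\begin{itemize}
\item Lift each eigenvector to an $H$-eigenvector in $M$. To do this, average an arbitrary lift $x$ by the projector $e_\chi=|H|^{-1}\sum_h\chi(h)^{-1}h$. The averaged lift $e_\chi x$ still reduces to the same class, since $e_\chi$ fixes that class.
\item Let $P_0=\bigoplus\Lambda\otimes\chi_i$, with $H$ acting diagonally through its action on $\Lambda$ and on the characters. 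The map to $M$ is then $H$-equivariant and compatible.
\item The kernel inherits a compatible $H$-action, so we can iterate.
\end{itemize}
The same argument works for $\gr\Lambda$, keeping eigenvectors homogeneous since $H$ preserves degrees.

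\textbf{Part (3).} Let the good filtration on $M$ be given by generators $m_i$ with shifts $k_i$.
\begin{itemize}
\item Give $P_0=\bigoplus\Lambda e_i$ the filtration making it $\bigoplus\Lambda(-k_i')$, where $k_i'$ is defined so that the generator $e_i$, which maps to the chosen minimal generator, lies in the filtration degree of its image in $M$. Then $P_0\to M$ is filtered.
\item Give $\ker(P_0\to M)$ the induced filtration. By the Artin–Rees type property, which holds for the noetherian $\fm$-adic filtration with noetherian $\gr\Lambda$ (Zariskian filtration), the induced filtration on a submodule is again good.
\item Repeat on the kernel. The induced filtrations on $\bF\otimes P_i$ give the filtration on $\Tor_i^\Lambda(\bF,M)=\bF\otimes P_i$; the equality holds by minimality.
\end{itemize}

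\textbf{Part (4).} This step is the main obstacle. Given homogeneous generators of $G_0$ mapping to homogeneous elements of $\gr M$ of degree $d$, lift them to elements of $F_dM$. This gives a filtered map $P_0\to M$ with $\gr P_0=G_0\to\gr M$ surjective. Surjectivity on $\gr$, together with completeness and separatedness of the filtrations, gives strict surjectivity, and then $\gr$ commutes with kernels: $\gr\ker=\ker(\gr P_0\to\gr M)$. Now $G_1$ resolves this graded kernel, so we iterate.

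The delicate point is this exactness of $\gr$ under strict maps, which needs the filtration to be complete and exhaustive. I would invoke the standard result for Zariskian filtered rings, namely Li–van Oystaeyen, exactly as in \cite[\S~2.2]{BHH$^+$25c}.
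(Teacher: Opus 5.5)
Your proposal is correct and is essentially what the paper relies on: the paper gives no argument of its own but cites \cite[Rk.~2.3.1]{BHH$^+$25c}, which rests on exactly these standard facts. These are Nakayama's lemma for the complete noetherian local ring $\Lambda$ and its graded analogue, lifting $H$-eigenvectors via the idempotents $e_\chi$ (possible since $|H|=q^2-1$ is prime to $p$), and lifting a gr-free resolution step by step using completeness and strictness for the Zariskian $\fm$-adic filtration, as in Li--van Oystaeyen. The only adjustment is cosmetic: with the paper's increasing-filtration convention, $\gr\Lambda$ sits in non-positive degrees, so \emph{positively graded} and \emph{bounded below} should read \emph{non-positively graded} and \emph{bounded above}.
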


\begin{proof}
    See \cite[Rk.~2.3.1]{BHH$^+$25c}. 
\end{proof}

\begin{remark}
    We point out that in Proposition~\ref{properties-of-resolution}(3), 
    we cannot ensure that each transition map is strict. 
    Moreover, in Proposition~\ref{properties-of-resolution}(4), even though $G_\bullet$ is minimal, we cannot ensure that $P_\bullet$ is still minimal. In fact, 
    $P_\bullet$ is minimal if and only if $G_\bullet$ is minimal and $\dim_\bF\Tor_i^\Lambda(\bF,M)=\dim_\bF\Tor_i^{\gr \Lambda}(\bF,\gr M)$.
\end{remark}

\section{The theorem}\label{sec-3}

For $\chi\in W_D(\bar{\rho})$ and $j\in\{0,\ldots,f-1\}$, 
define 
$$t_j=\begin{cases}
    y_j, &\chi\alpha_j^{-1}\in W_D(\bar{\rho}),\\
    z_j, &\chi\alpha_j\in W_D(\bar{\rho}),\\
    y_jz_j, &\text{otherwise}. 
\end{cases}$$
(We explain why $t_j$ is well-defined in Remark~\ref{t_j-well-defined}.)  
Then we define $\fa(\chi)=(t_0,\ldots,t_{f-1})$ to be an ideal of $R$. 

\begin{theorem}\label{main-thm}
    Let $\bar{\rho}:\Gal(\overline{K}/K)\to\GL_2(\bF)$ be a continuous representation which is $5$-generic. 
    Let $\pi$ be a smooth representation of $D^\times$ over $\bF$ with a central character, satisfying the following conditions: 

    \begin{enumerate}[label={(\roman*)},ref={[\roman*]}]
        \item\label{condition-i}
      As an $\cO_D^\times Z_D$-representation, $\pi[\fm]\cong (\bigoplus_{\chi\in W_D(\bar{\rho})}\chi)^{\oplus r}$ for some $r\geq 1$, 
      with $Z_D$ acting by $\det(\bar{\rho})\omega^{-1}$. 
      In particular, $\pi$ is admissible and has the central character $\det(\bar{\rho})\omega^{-1}$ (see Remark~\ref{cent-char}). 
      
        \item\label{condition-ii}
        For each $\chi\in W_D(\bar{\rho})$, $[\pi[\fm^3]:\chi]=r$. 

        \item\label{condition-iii}
        For each $\chi: \cO_D^\times\to \bF^\times$ and $i\geq 0$, $\Ext^i_{\cO_D^\times/Z_D^1}(\chi,\pi)\ne 0$ only if $\chi\in W_D(\bar{\rho})$, 
        in which case $m_i:=\dim_{\bF}\Ext^i_{\cO_D^\times/Z_D^1}(\chi,\pi)=\binom{2f}{i}r$. 
      \end{enumerate}
    
    Then we have an isomorphism 
    $$\gr_\fm\pi^\vee\cong (\bigoplus_{\chi\in W_D(\bar{\rho})}\chi^{-1}\otimes R/\fa(\chi))^{\oplus r}.$$
    Moreover, $\gr_\fm\pi^\vee$ is a Cohen--Macaulay $\gr \Lambda$-module of grade $2f$, that is, 
    $\Ext_{\gr \Lambda}^i(\gr_\fm\pi^\vee,\gr \Lambda)\ne 0$ if and only if $i=2f$. 
    In addition, $\gr_\fm\pi^\vee$ is essentially self-dual in the sense that 
    $$\Ext_{\gr \Lambda}^{2f}(\gr_\fm\pi^\vee,\gr \Lambda)\cong \gr_\fm\pi^\vee\otimes(\det(\bar{\rho})\omega^{-1}).$$
\end{theorem}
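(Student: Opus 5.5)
We follow the strategy of \cite[\S~2]{BHH$^+$25c}, transplanted to $D^\times$. First, by condition (ii) and Proposition~\ref{killed-by-I_D}, $\gr_\fm\pi^\vee$ is killed by $I_D$, so it is a graded $\overline{R}$-module with compatible $H$-action. Moreover, (i) gives $\bF\otimes_{\gr\Lambda}\gr_\fm\pi^\vee\cong(\bigoplus_{\chi\in W_D(\bar{\rho})}\chi^{-1})^{\oplus r}$ in degree $0$. Hence there is a surjection
$$\Big(\bigoplus_{\chi}\chi^{-1}\otimes\overline{R}\Big)^{\oplus r}\twoheadrightarrow \gr_\fm\pi^\vee .$$
The next step is to show that each $\chi^{-1}$-generator is killed by $\fa(\chi)$. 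If $t_j=y_j$, then $y_j$ times a $\chi^{-1}$-generator lands in the $\chi^{-1}\alpha_j$-isotypic part of degree $1$. If $\chi\alpha_j^{-1}\in W_D(\bar\rho)$ (and the case is not already covered by $z_j$), we must show that this part vanishes. I would use $\pi[\fm^2]/\pi[\fm]$ together with condition (iii) for $i=1$: the dimension count $m_1=2f\,r$ forces $\Ext^1_{\cO_D^\times/Z_D^1}(\chi',\pi)$ to be exactly the expected extensions, namely those of $\chi\alpha_j^{\pm1}$ by $\chi$. Remark~\ref{t_j-well-defined}, which says at most one of $\chi\alpha_j^{\pm1}$ lies in $W_D(\bar\rho)$, then kills the wrong direction. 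This gives a surjection
$$\Big(\bigoplus_\chi\chi^{-1}\otimes R/\fa(\chi)\Big)^{\oplus r}\twoheadrightarrow\gr_\fm\pi^\vee .$$

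To prove that this surjection is an isomorphism, we compare Tor dimensions. Each $R/\fa(\chi)$ is a complete intersection in $R$, defined by a regular sequence of $f$ monomials. Therefore, as a $\gr\Lambda$-module it has a Koszul-type minimal resolution: first use the regular central sequence $h_0,\ldots,h_{f-1}$, then $t_0,\ldots,t_{f-1}$. This resolution has length $2f$, and its $i$-th term has rank $\binom{2f}{i}$ with explicit $H$-eigencharacters and degrees. On the $\Lambda$ side, condition (iii) gives
$$\dim\Tor_i^\Lambda(\bF,\pi^\vee)_{\chi^{-1}}=\dim\Ext^i_{\cO_D^\times/Z_D^1}(\chi,\pi)=\binom{2f}{i}r$$
for $\chi\in W_D(\bar\rho)$, and zero for all other $\chi$. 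We then run the argument of \cite[Thm.~2.3.?]{BHH$^+$25c}. Take a minimal gr-free resolution $G_\bullet$ of $\gr_\fm\pi^\vee$ and lift it via Proposition~\ref{properties-of-resolution}(4) to a filt-free resolution of $\pi^\vee$. A spectral sequence from the filtration on $\bF\otimes P_\bullet$ gives $\dim\Tor_i^\Lambda\le\dim\Tor_i^{\gr\Lambda}$, component-wise in $H$-characters. Lemma~\ref{lemma-of-filt} and Lemma~\ref{inj-of-resolution} then let us split off the pieces of the resolution in the expected degrees and characters. We compare this with the explicit Koszul resolution of the candidate module via the surjection above, using Lemma~\ref{truncation} to control low degrees. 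The conclusion is that the kernel of the surjection has vanishing $\Tor_0$, hence is zero.

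The main obstacle is this spectral-sequence comparison. One must show that no cancellation occurs: the $H$-characters $\chi^{-1}\prod\alpha_j^{\pm1}$ appearing in different homological degrees and different internal degrees of the Koszul complexes must be distinct, so that the differentials $d_r$ vanish. This is where genericity enters. Since products of up to roughly $2f$ factors $\alpha_j^{\pm1}$ occur, we need distinctness of characters of the form $\chi\prod_{j\in S}\alpha_j^{\epsilon_j}$ that differ by bounded exponent shifts. I would verify that $5$-genericity (via the explicit exponents of $\chi_{w,d}$) suffices, by tracking only the shifts that actually appear, namely at most two per embedding. Finally, the Cohen--Macaulay property and self-duality follow formally. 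Each $R/\fa(\chi)$ is perfect of grade $2f$ over $\gr\Lambda$, and dualizing its Koszul resolution gives $\Ext^{2f}\cong R/\fa(\chi)$ twisted by the product of the characters of $h_j$ and $t_j$. Computing that character gives $\chi^{-1}\mapsto\chi^{-1}\eta_c$, which is the twist by $\det(\bar\rho)\omega^{-1}$ recorded in Remark~\ref{cent-char}.
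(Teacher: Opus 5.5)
There is a genuine gap in the central step, the injectivity of $N\twoheadrightarrow\gr_\fm\pi^\vee$. Lifting a minimal resolution of $\gr_\fm\pi^\vee$ only gives $\dim\Tor_i^\Lambda(\bF,\pi^\vee)\le\dim\Tor_i^{\gr\Lambda}(\bF,\gr_\fm\pi^\vee)$. That is the wrong direction for detecting $\cK=\ker(N\to\gr_\fm\pi^\vee)$. A generator of $\cK$ creates an extra class in $E^1_1$, and a priori nothing in the counts from (iii) stops a differential from $E_2$ from cancelling it. Ruling this out by $H$-characters would require controlling $\JH_H(N_d)$ in the degrees $d$ where $\cK$ is generated, and these are not bounded a priori. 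Moreover, $N_d$ contains $\chi^{-1}\alpha_j^{-b}$ (from $z_j^b$ when $t_j=y_j$) for every $b$. For $b$ of size about $r_j$ such characters can lie in $\cX$, because elements of $W_D(\bar\rho)$ with different $w$ differ by factors $\alpha_j^{\pm r_j}$ up to small corrections. So no fixed genericity makes all differentials vanish. Your plan also has no free $\Lambda$-module to which Lemmas~\ref{inj-of-resolution} and~\ref{lemma-of-filt} can be applied, since the Koszul resolution of $N$ lives on the graded side.

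The paper supplies the missing ingredients in four steps.
\begin{enumerate}
\item It builds the finite-dimensional $\tau=\tau^{(3)}$ with $\gr_\fm\tau^\vee\cong N/IN$ from the uniserial $E_j^\pm(\chi,s)$. It then embeds $\tau\hookrightarrow\pi$ (Lemma~\ref{(2n-1)-generic}, via Lemma~\ref{key-lemma} and Corollary~\ref{N-multi-free}). This is where $5$-genericity enters, and it already gives $\cK_n=0$ for $n\ge-2$.
\item It proves $\Tor_i^\Lambda(\bF,\pi^\vee)\hookrightarrow\Tor_i^\Lambda(\bF,\tau^\vee)$ for $i\le2$ (Proposition~\ref{tor-inj}). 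Character separation is used only here, and only in the bounded window of degrees $-3,-4$ (Lemma~\ref{char-sep}), because the spectral sequence for $\tau^\vee$ degenerates.
\item The resulting $P_i$ are direct summands of $L_i$, so Lemma~\ref{lemma-of-filt} gives a good filtration $F$ on $\pi^\vee$ with $\gr_F\pi^\vee\cong N$. This is not the $\fm$-adic filtration.
\item Purity of $N$ and invariance of characteristic cycles (Lemma~\ref{cycle}) then give $Z(\cK)=Z(N)-Z(\gr_\fm\pi^\vee)=0$, hence $\cK=0$.
\end{enumerate}

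Your self-duality computation is also wrong summand by summand. Dualizing the resolution of $\chi^{-1}\otimes R/\fa(\chi)$ gives $\chi\eta_\chi^{-1}\otimes R/\fa(\chi)$, where $\eta_\chi=\prod_{t_j\ne y_jz_j}\chi_{t_j}$. One has $\chi\eta_\chi^{-1}=(\chi^*)^{-1}(\eta_c\circ\Nm)$. This is never $\chi^{-1}(\eta_c\circ\Nm)$, since that would force $\chi^*=\chi$, whereas $w^*=1-w$. So the $\chi$-summand is dual to the $\chi^*$-summand. You need the involution $\chi\mapsto\chi^*$ of $W_D(\bar\rho)$ with $\fa(\chi^*)=\fa(\chi)$ (Corollary~\ref{ess-self-dual}); the paper also uses this self-duality to obtain purity of $N$.

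A minor point: $t_je_\chi=0$ follows directly from (ii). When $t_j=y_j$ (resp.\ $z_j$), the character $\chi\alpha_j^{-1}$ (resp.\ $\chi\alpha_j$) lies in $W_D(\bar\rho)$ and so already occurs $r$ times in $\pi[\fm]$. The $\Ext^1$ count from (iii) only bounds an image and does not force this vanishing.
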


\begin{remark}
    \begin{sloppypar}
    (1) These conditions are natural analogues of Assumptions (i), (ii), and (iv) in \cite[\S~2.1]{BHH$^+$25c}. 
    To see this, 
    note that since $U_D^1$ is pro-$p$ and $\cO_D^\times/U_D^1$ is prime to $p$, 
    we have $\soc_{\cO_D^\times}\pi=\pi^{U_D^1}=\pi[\fm]$. 

    (2) Take a minimal free resolution $P_\bullet$ of $\pi^\vee$ by $\Lambda$-modules with compatible $H$-action. 
    We have 
    $$\Tor_i^\Lambda(\bF,\pi^\vee)=(P_i/\fm P_i)=\Hom_{U_D^1/Z_D^1}(\bF,P_i^\vee)^\vee=\Ext_{U_D^1/Z_D^1}^i(\bF,\pi)^\vee=(\bigoplus_{\chi}\Ext_{U_D^1/Z_D^1}^i(\bF,\pi)^{H=\chi})^\vee,$$
    where $\chi$ runs over all smooth characters $\cO_D^\times\to\bF^\times$ and the superscript denotes the $H$-eigenspace with eigencharacter $\chi$. 
    Using $\Ext_{U_D^1/Z_D^1}^i(\bF,\pi)^{H=\chi}\cong \Ext_{\cO_D^\times/Z_D^1}^i(\chi,\pi)$ together with Condition~\ref{condition-iii}, we have 
    $$\Tor_i^\Lambda(\bF,\pi^\vee)\cong\bigoplus_{\chi\in W_D(\bar{\rho})}(\chi^{-1})^{\oplus m_i}.$$
    \end{sloppypar}
\end{remark}

\section{The proof}\label{sec-4}

\subsection{The module \texorpdfstring{$N$}{N} and its resolution}

Let $$N=(\bigoplus_{\chi\in W_D(\bar{\rho})}\chi^{-1}\otimes R/\fa(\chi))^{\oplus r}$$ be the candidate for $\gr_\fm\pi^\vee$. 

\begin{proposition}
    We have a natural surjection $N \to \gr_\fm\pi^\vee$ of $\gr \Lambda$-modules with compatible $H$-action.
\end{proposition}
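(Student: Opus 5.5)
We construct the surjection by choosing generators of $\gr_\fm\pi^\vee$ coming from the socle and then checking that each generator is killed by the relevant ideal $\fa(\chi)$.

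\textbf{Generators.} By Condition~(i), $\pi[\fm]\cong(\bigoplus_{\chi\in W_D(\bar\rho)}\chi)^{\oplus r}$. Hence
\[
(\gr_\fm\pi^\vee)_0=\pi^\vee/\fm\pi^\vee=(\pi[\fm])^\vee\cong\Bigl(\bigoplus_{\chi}\chi^{-1}\Bigr)^{\oplus r}
\]
as $H$-modules. Since $\gr_\fm\pi^\vee$ is generated in degree $0$ over $\gr\Lambda$, choosing $H$-eigenvectors $v_{\chi,k}$ ($1\le k\le r$) of eigencharacter $\chi^{-1}$ in degree $0$ gives a surjection of graded $\gr\Lambda$-modules with compatible $H$-action
\[
\Bigl(\bigoplus_\chi\chi^{-1}\otimes\gr\Lambda\Bigr)^{\oplus r}\twoheadrightarrow\gr_\fm\pi^\vee .
\]
By Condition~(ii) and Proposition~\ref{killed-by-I_D}, $I_D$ kills $\gr_\fm\pi^\vee$. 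So the map factors through $\overline{R}=R/(y_jz_j)$, and in particular through $R$. What remains is to show that each $v=v_{\chi,k}$ is killed by every $t_j$.

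\textbf{The case $t_j=y_jz_j$.} This case is already handled, since $y_jz_j\in I_D$.

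\textbf{The case $t_j=y_j$.} Here $\chi\alpha_j^{-1}\in W_D(\bar\rho)$, and we must show $y_jv=0$. Write $\chi'$ for the other character, $\chi\alpha_j$; the plan is to show $\chi'\notin W_D(\bar\rho)$. This is exactly the well-definedness statement (Remark~\ref{t_j-well-defined}): $\chi\alpha_j^{\pm1}$ cannot both lie in $W_D(\bar\rho)$. That follows from the explicit $(w,d)$ description, since multiplying by $\alpha_j=\iota^{p^j(q-1)}$ shifts $d_j$ by $-1$ (up to the genericity needed to avoid carries). Two characters in $W_D(\bar\rho)$ with $d_j$ differing by $2$ would force $w_j=1$ and $w_j=0$ with the other coordinates unchanged, which is incompatible with Relations~\ref{RI}/\ref{RII}/\ref{RII'}.

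Now $y_j$ has $H$-eigencharacter $\alpha_j$. Since $v$ has eigencharacter $\chi^{-1}$, the element $y_jv\in(\gr_\fm\pi^\vee)_1=\fm\pi^\vee/\fm^2\pi^\vee$ has eigencharacter $\chi^{-1}\alpha_j=(\chi')^{-1}$, where $\chi'=\chi\alpha_j^{-1}$ with the sign convention fixed appropriately. The degree-$1$ part is dual to $\pi[\fm^2]/\pi[\fm]$, whose $\cO_D^\times$-constituents embed in $\Ext^1_{\cO_D^\times/Z_D^1}(\pi[\fm]\text{-constituents},\cdot)$. More concretely, a nonzero $(\chi')^{-1}$-eigenvector in $(\gr\pi^\vee)_1$ gives a constituent $\chi'$ of $\pi[\fm^2]/\pi[\fm]$. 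If $\chi'\notin W_D(\bar\rho)$, I would argue as follows: take a subquotient extension of $\chi$ by $\chi'$ in $\pi[\fm^2]$ and deduce $\Ext^0$ or $\Ext^1$ contributions at $\chi'$. Alternatively, use the first step of the minimal resolution. The generators of degree $1$ of $\fm\pi^\vee/\fm^2\pi^\vee$ that are not in the image of the degree-$0$ generators are controlled by $\Tor_1$, and Condition~(iii) says $\Tor_i$ only involves characters in $W_D(\bar\rho)$.

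\textbf{Main obstacle.} The hard step is this one: the eigencharacter argument by itself does not kill $y_jv$, because $(\chi')^{-1}$ might appear in degree $1$ via some other generator $v_{\chi''}$ and some other variable. I would handle it by an eigencharacter bookkeeping. The degree-$1$ part is spanned by $x\,v_{\chi''}$ for $x\in\{y_i,z_i\}$, with eigencharacters $(\chi'')^{-1}\alpha_i^{\pm1}$. The relevant alternative is
\[
\chi' =\chi''\alpha_i^{\mp1}\quad\text{with }\chi''\in W_D(\bar\rho).
\]
The genericity of $\bar\rho$ should make the $\alpha_i^{\pm1}$, and their products, distinct enough that the only way to realize $\chi'$ is through $y_j$ acting on the $\chi$-generators. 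I would then use Condition~(ii), together with a dimension count of $\pi[\fm^2]$, to rule out nonzero $y_jv$.

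**Fallback.** If this gets stuck, I would follow \cite[\S~2]{BHH$^+$25c}. There one shows directly that $\pi[\fm^2]$ has no $\chi'$-isotypic part unless $\chi'\in W_D(\bar\rho)$, using the injectivity of $\pi|_{\cO_D^\times}$ implied by the $\Ext$ vanishing in Condition~(iii).
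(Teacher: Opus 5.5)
Your reduction matches the paper: $\gr_\fm\pi^\vee$ is generated by its degree-$0$ part $(\pi[\fm])^\vee$, it is an $R$-module by Proposition~\ref{killed-by-I_D}, and the case $t_j=y_jz_j$ is automatic. But the case $t_j=y_j$ has a genuine gap, because you aim at the wrong character and the wrong hypothesis. The eigencharacter of $y_jv_{\chi,k}$ is $\chi^{-1}\alpha_j=(\chi\alpha_j^{-1})^{-1}$, and $\chi\alpha_j^{-1}$ lies in $W_D(\bar{\rho})$ \emph{by the very definition} of $t_j=y_j$. So if $y_jv_{\chi,k}\neq 0$, then dually $\pi[\fm^2]/\pi[\fm]$ contains the constituent $\chi\alpha_j^{-1}$. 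Since $\chi\alpha_j^{-1}$ already occurs $r$ times in $\pi[\fm]$ by Condition~(i), this gives $[\pi[\fm^3]:\chi\alpha_j^{-1}]\geq r+1$, contradicting Condition~(ii). That is the paper's whole argument, and the case $t_j=z_j$ is symmetric. It needs no genericity and no bookkeeping of which other generators and variables could produce the same eigencharacter: \emph{any} nonzero vector of eigencharacter $(\chi\alpha_j^{-1})^{-1}$ in $\fm\pi^\vee/\fm^2\pi^\vee$ already yields the contradiction.

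Your plan instead sets out to show that the relevant character is \emph{not} in $W_D(\bar{\rho})$, and then to exclude constituents outside $W_D(\bar{\rho})$ from $\pi[\fm^2]/\pi[\fm]$. The switch between $\chi\alpha_j$ and $\chi\alpha_j^{-1}$ ``with the sign convention fixed appropriately'' hides exactly this error. That route cannot work, for three reasons.
\begin{itemize}
\item Condition~(ii) says nothing about characters outside $W_D(\bar{\rho})$.
\item Your fallback claim, that $\pi[\fm^2]$ has no $\chi'$-isotypic part for $\chi'\notin W_D(\bar{\rho})$, is false. Under the hypotheses of Theorem~\ref{main-thm} one has $\gr_\fm\pi^\vee\cong N$. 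When $t_j=y_j$, the nonzero element $z_jv_{\chi,k}\in\fm\pi^\vee/\fm^2\pi^\vee$ then yields the constituent $\chi\alpha_j$ of $\pi[\fm^2]/\pi[\fm]$, and $\chi\alpha_j\notin W_D(\bar{\rho})$ by Remark~\ref{t_j-well-defined}.
\item Condition~(iii) only controls the socle and the higher $\Ext$ groups, i.e.\ the generators of a minimal resolution, not the constituents of $\gr_\fm\pi^\vee$. It also does not make $\pi|_{\cO_D^\times}$ injective, since $\dim_{\bF}\Ext^1_{\cO_D^\times/Z_D^1}(\chi,\pi)=2fr\neq 0$ for $\chi\in W_D(\bar{\rho})$.
\end{itemize}
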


\begin{proof}
    Note that $\gr_\fm\pi^\vee$ is an $R$-module by Proposition~\ref{killed-by-I_D} and is generated by $\gr^0_\fm\pi^\vee=\pi[\fm]^\vee$ by Nakayama's lemma. 
    Let $\{v_{\chi,k}:\chi\in W_D(\bar{\rho}),\, k=1,\ldots,r\}$ be an $\bF$-basis of $\pi[\fm]$ consisting of $H$-eigenvectors, 
    and let $\{e_{\chi,k}\}\subset\gr^0_\fm\pi^\vee$ be the dual basis to $\{v_{\chi,k}\}$, so that $H$ acts on $e_{\chi,k}$ by $\chi^{-1}$. It suffices to show that $e_{\chi,k}$ is killed by $\fa(\chi)$. 
    Fix $0\leq j\leq f-1$ and first consider the case $t_j=y_j$. Note that $y_je_{\chi,k}\in \gr^{-1}_\fm\pi^\vee$ is an $H$-eigenvector with character $\chi^{-1}\alpha_j$. 
    Thus, if $t_je_{\chi,k}\ne 0$, then, dually, the $\chi\alpha_j^{-1}$-isotypic subspace of $\pi[\fm^2]/\pi[\fm]$ would be nonzero, contradicting Condition~\ref{condition-ii}. 
    The other cases are similar. 
\end{proof}

We then study the resolution of $N$. 
Let $\fb(\chi)=(t_j,h_j:0\leq j\leq f-1)$ be the preimage of $\fa(\chi)$ in $\gr \Lambda$. 
For $n\geq 1$, let $I^{(n)}$ be the $H$-stable graded ideal $(y_j^n,z_j^n,h_j)$. By abuse of notation, we also write $I^{(n)}$ for its image in $R$. 
We write $I$ for $I^{(3)}$. 

Write $(\gr \Lambda)_j=\bF\langle y_j,z_j,h_j\rangle$ for the subalgebra of $\gr \Lambda$ generated by $y_j,z_j,h_j$. 
We have $\gr \Lambda\cong\bigotimes_{j=0}^{f-1}(\gr \Lambda)_j$. 
Write $\fb(\chi)_j=(t_j,h_j)$ and $I^{(n)}_j=(y_j^n,z_j^n,h_j)$ as ideals of $(\gr \Lambda)_j$, and write $I_j=I_j^{(3)}$. 
We have $\gr \Lambda/\fb(\chi)\cong \bigotimes_{j=0}^{f-1}(\gr \Lambda)_j/\fb(\chi)_j$ and $\gr \Lambda/(\fb(\chi)+I)\cong \bigotimes_{j=0}^{f-1}(\gr \Lambda)_j/(\fb(\chi)_j+I_j)$.

\begin{lemma}\label{resolution-of-N}
    There is a minimal gr-free resolution $G_\bullet$ of $N/IN$ with a compatible $H$-action, 
    which admits an $H$-stable direct summand $G'_\bullet$ that is a minimal gr-free resolution of $N$, 
    and such that the induced map $H_0(G'_\bullet)\to H_0(G_\bullet)$ is the natural map 
    $N\to N/IN$. 
\end{lemma}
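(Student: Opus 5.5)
Since $N$ is a direct sum of copies of $\chi^{-1}\otimes R/\fa(\chi)=\chi^{-1}\otimes\gr\Lambda/\fb(\chi)$ and $N/IN$ is the corresponding sum of $\chi^{-1}\otimes\gr\Lambda/(\fb(\chi)+I)$, it suffices to treat a single $\chi$. Direct sums and twists by $\chi^{-1}$ preserve minimality and compatibility of the $H$-action, because all generators and differentials can be chosen $H$-homogeneous. Fix $\chi$. By the tensor decompositions $\gr\Lambda/\fb(\chi)\cong\bigotimes_j(\gr\Lambda)_j/\fb(\chi)_j$ and $\gr\Lambda/(\fb(\chi)+I)\cong\bigotimes_j(\gr\Lambda)_j/(\fb(\chi)_j+I_j)$, I reduce to the rank-one factors $(\gr\Lambda)_j=\bF\langle y_j,z_j,h_j\rangle$ and take tensor products of resolutions over $\bF$. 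The outer tensor product of minimal gr-free resolutions of the factors is a minimal gr-free resolution over $\gr\Lambda=\bigotimes_j(\gr\Lambda)_j$. It is exact by Künneth over the field $\bF$, and it is minimal because the differentials of each factor become zero after $\bF\otimes-$. The tensor product of the $H$-stable direct-summand inclusions is again an $H$-stable direct summand, and on $H_0$ it is the tensor product of the natural surjections, which is $N\to N/IN$.

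For a single $j$, write $A=(\gr\Lambda)_j$, a graded enveloping algebra of the Heisenberg Lie algebra, and let $t=t_j\in\{y_j,z_j,y_jz_j\}$. The resolution of $A/(t,h)$ is a Koszul-type complex. The element $h$ is central and regular, and $t$ is regular on $A/h=\bF[y,z]$. This gives
\[
0\to A(-\deg t-1)\to A(-\deg t)\oplus A(-1)\to A\to A/(t,h)\to 0,
\]
with maps written carefully using right multiplication and the commutation relation $[y,z]=h$. When $t=y$ we have $[y,h]=0$, so the complex is the honest Koszul complex. When $t=yz$, $h$ still commutes with $t$, so the complex is again Koszul. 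The quotient $A/(t,h)$ is $\bF[y,z]/(t)$.

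For $A/(t,h,y^3,z^3)$, note that modulo $h$ we are in $\bF[y,z]$. If $t=y$ the quotient is $\bF[y,z]/(y,z^3)$, a complete intersection. If $t=yz$ it is $\bF[y,z]/(yz,y^3,z^3)$, which has a Hilbert–Burch resolution of length $2$ over $\bF[y,z]$. Lifting through the central regular element $h$ gives the resolution over $A$. I then construct the resolution $G_\bullet$ of $A/(t,h,y^3,z^3)$ so that it contains the resolution $G'_\bullet$ of $A/(t,h)$ as a direct summand. Concretely, I choose the generators of the ideal as $t,h$ together with the extra generators $y^3,z^3$ (when $t=y$ the generator $y^3$ is redundant and is dropped).

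The key tool is Lemma~\ref{inj-of-resolution}. I take the comparison map $G'_\bullet\to G_\bullet$ lifting $N\to N/IN$ and check that it is injective after applying $\bF\otimes-$ in each homological degree. The lemma then makes it a split injection degreewise. The main obstacle is ensuring that the complement is compatible, so that $G'_\bullet$ is a subcomplex direct summand. For this I would write both resolutions explicitly and check that the Tor-classes of $G'$, namely $\Tor_i^A(\bF,A/(t,h))$, map injectively to $\Tor_i^A(\bF,A/(t,h,I))$. The degree bookkeeping should work here. In homological degree $1$ the generators $t,h$ remain minimal generators of the larger ideal, because $y^3$ and $z^3$ have degree $3>\deg t$. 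In degree $2$ the Koszul relation between $t$ and $h$ remains a minimal syzygy. In degree $3$ the top class $A(-\deg t-1)$ maps to a minimal generator, and I would verify this directly using the explicit form of the resolution. If this direct verification fails for $t=yz$, the fallback is to take $G_\bullet$ to be a mapping cone of $G'_\bullet$ with the resolution of the $I$-part. That construction exhibits $G'$ as a summand on the nose, and one then checks its minimality using the degree gap.
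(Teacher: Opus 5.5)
Your proposal is correct and follows the paper's route, which defers to the $\GL_2$ case: reduce to one $\chi$ and one factor $(\gr\Lambda)_j$, resolve $(\gr\Lambda)_j/\fb(\chi)_j$ and $(\gr\Lambda)_j/(\fb(\chi)_j+I_j)$ with compatible $H$-action so that the first resolution is a subcomplex and degreewise direct summand of the second, and then tensor over $j$.

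There are only small slips, and none affects the argument. Since $h_j\in\fm^2$, your shifts should be $2$ and $\deg t+2$ rather than $1$ and $\deg t+1$. The Koszul resolution of $(\gr\Lambda)_j/\fb(\chi)_j$ has length $2$, so there is no homological-degree-$3$ class to check. Finally, the $\Tor$-injectivity in every homological degree follows at once from the long exact sequence of $0\to(\fb(\chi)_j+I_j)/\fb(\chi)_j\to(\gr\Lambda)_j/\fb(\chi)_j\to(\gr\Lambda)_j/(\fb(\chi)_j+I_j)\to 0$: the kernel is generated in degree $3$, so its $\Tor_i$ lives in degrees $\geq i+3$, while $\Tor_i$ of the Koszul quotient lives in degrees $\leq 2i\leq 4$. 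This makes the mapping-cone fallback unnecessary.
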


\begin{proof}
    This is the same as \cite[Lemma~2.3.2]{BHH$^+$25c}. Note that we just need to construct minimal gr-free resolutions of $(\gr \Lambda)_j/\fb(\chi)_j$ and of $(\gr \Lambda)_j/(\fb(\chi)_j+I_j)$ by $(\gr \Lambda)_j$-modules with compatible $H$-action. 
    These resolutions have the same form as their counterparts in the $\GL_2$ case.
\end{proof}

Since these resolutions have the same form as their counterparts in \cite{BHH$^+$25c}, 
the arguments there also prove the following results. 

\begin{corollary}
    The natural map $N\to N/IN$ induces injective graded morphisms with compatible $H$-action 
    $$\Tor_i^{\gr \Lambda}(\bF,N)\to \Tor_i^{\gr \Lambda}(\bF,N/IN),$$
    for $i\geq 0$. 
\end{corollary}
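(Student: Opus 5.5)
The plan is to compute both sides through the explicit resolutions from Lemma~\ref{resolution-of-N}, reducing everything to a single factor $(\gr\Lambda)_j$ via the Künneth decomposition. Since $N$ is a direct sum of copies of $\chi^{-1}\otimes\gr\Lambda/\fb(\chi)$, and $\gr\Lambda/\fb(\chi)\cong\bigotimes_j(\gr\Lambda)_j/\fb(\chi)_j$ (likewise for $N/IN$ with $\fb(\chi)_j+I_j$), the minimal resolutions $G'_\bullet$ and $G_\bullet$ can be taken to be tensor products over $j$ of minimal resolutions of the one-variable-pair quotients. By minimality, $\Tor_i^{\gr\Lambda}(\bF,-)$ equals $\bF\otimes G'_i$ (resp.\ $\bF\otimes G_i$). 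The map on $\Tor$ induced by $N\to N/IN$ is then $\bF\otimes(G'_\bullet\to G_\bullet)$.

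The key input is that $G'_\bullet$ is an $H$-stable direct summand of $G_\bullet$ and that the induced map on $H_0$ is the natural surjection $N\to N/IN$. I would first make sure the inclusion $G'_\bullet\hookrightarrow G_\bullet$ is a morphism of complexes lifting $N\to N/IN$. By the lemma, $G'_\bullet$ is a subcomplex which is a direct summand in each degree, and its $H_0$ maps as stated. By the comparison theorem, any lift of $N\to N/IN$ to the resolutions is homotopic to this inclusion, so the induced map on $\Tor_i(\bF,-)$ is computed by $\bF\otimes_{\gr\Lambda}G'_i\to\bF\otimes_{\gr\Lambda}G_i$.

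Since $G'_i$ is a direct summand of $G_i$ as gr-free modules, this map is split injective. Both complexes $\bF\otimes G'_\bullet$ and $\bF\otimes G_\bullet$ have zero differentials by minimality, so the map on homology is exactly this injection. Gradings and $H$-compatibility are preserved because all constructions are graded and $H$-equivariant.

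The main obstacle is hidden in Lemma~\ref{resolution-of-N}: one must verify, factor by factor, that the minimal resolution of $(\gr\Lambda)_j/\fb(\chi)_j$ embeds as a direct summand of that of $(\gr\Lambda)_j/(\fb(\chi)_j+I_j)$, compatibly with the quotient map. This must be checked separately for $t_j=y_j$, $t_j=z_j$ and $t_j=y_jz_j$.

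For $t_j=y_j$: the ideal $(y_j,h_j)$ is generated by a regular sequence (Koszul, ranks $1,2,1$), whereas $(y_j,z_j^3,h_j)$ has a length-$3$ Koszul-type resolution. The Koszul complex on $(y_j,h_j)$ sits inside as the sub-Koszul complex, and it is a summand because its generators are part of a minimal generating set. Noncommutativity is harmless, since $h_j$ is central and the relevant commutators lie in the ideal.

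For $t_j=y_jz_j$: the sequence $(y_jz_j,h_j)$ is regular. Here $y_jz_j\notin$ a minimal generating set of $(y_j^3,z_j^3,h_j,y_jz_j)$ is false, since $y_jz_j$ is itself a minimal generator. So one checks directly that the resolution of $(y_jz_j,y_j^3,z_j^3,h_j)$ contains the Koszul complex of $(y_jz_j,h_j)$ as a summand. This matches the $\GL_2$ computation of \cite{BHH$^+$25c}, which I would transcribe.

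Tensoring the factorwise summand inclusions over $j$ yields the global statement.
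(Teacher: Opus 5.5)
Your proof is correct and is the same as the paper's, which defers to the $\GL_2$ case. In both, $G'_\bullet\subset G_\bullet$ is a subcomplex that is termwise an $H$-stable graded direct summand and lifts $N\to N/IN$, and both resolutions are minimal, so the induced map on $\Tor_i$ is the split injection $\bF\otimes_{\gr\Lambda}G'_i\hookrightarrow\bF\otimes_{\gr\Lambda}G_i$. Your factor-by-factor discussion only re-derives Lemma~\ref{resolution-of-N}, which the paper already takes as given, so it is not needed for this corollary.
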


\begin{corollary}
    The graded right $\gr \Lambda$-module $\Ext^{2f}_{\gr \Lambda}(N,\gr \Lambda)$ is supported in degrees $\leq 4f$, and $\bF\otimes_{\gr \Lambda}\Ext^{2f}_{\gr \Lambda}(N,\gr \Lambda)$ is supported in degrees $d\in[3f,4f]$.  
\end{corollary}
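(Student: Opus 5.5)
The plan is to compute $\Ext^{2f}_{\gr \Lambda}(N,\gr \Lambda)$ factor by factor through the tensor decomposition. Since $N$ is a direct sum of twists of $\gr \Lambda/\fb(\chi)$ and $\gr \Lambda/\fb(\chi)\cong\bigotimes_{j}(\gr \Lambda)_j/\fb(\chi)_j$, it suffices to treat a single cyclic module $\gr \Lambda/\fb(\chi)$ and track degrees and $H$-characters.

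For each $j$, the ideal $\fb(\chi)_j=(t_j,h_j)$ is generated by a regular sequence of length two in $(\gr \Lambda)_j$, with $h_j$ central and $t_j$ regular modulo $h_j$. This holds because $(\gr \Lambda)_j/(h_j)=\bF[y_j,z_j]$ and $t_j\in\{y_j,z_j,y_jz_j\}$ is a nonzero element there. Hence $(\gr \Lambda)_j/\fb(\chi)_j$ has the Koszul-type minimal resolution
$$0\to(\gr \Lambda)_j(-2-\deg t_j)\to(\gr \Lambda)_j(-2)\oplus(\gr \Lambda)_j(-\deg t_j)\to(\gr \Lambda)_j\to 0 .$$
These are the same resolutions used in Lemma~\ref{resolution-of-N}, taking $h_j$ in degree $2$ and $y_j,z_j$ in degree $1$ with respect to the $\fm$-adic grading, up to the paper's sign convention for degrees. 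Tensoring over $j$ gives the minimal resolution $G'_\bullet$ of $\gr \Lambda/\fb(\chi)$, which has length $2f$. Its top term is a single free module of rank one, shifted by $\sum_j(2+\deg t_j)$.

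Next apply $\Hom_{\gr \Lambda}(-,\gr \Lambda)$. The dual of a Koszul complex on a regular sequence is again a Koszul complex, so $\Ext^{2f}$ is a cyclic right module isomorphic to $\gr \Lambda/\fb(\chi)$ placed in degree $\sum_j(2+\deg t_j)$, again up to the sign convention. Each $\deg t_j\in\{1,2\}$, so the generator of $\Ext^{2f}_{\gr\Lambda}(N,\gr\Lambda)$ sits in some degree $d\in[3f,4f]$. Since $\bF\otimes_{\gr \Lambda}\Ext^{2f}$ is the span of these generators, it is supported in degrees in $[3f,4f]$. The full module $\Ext^{2f}$ is then the generator degree minus the nonnegative $\fm$-adic degrees of $\gr \Lambda/\fb(\chi)$, in the increasing-filtration convention where $\gr\Lambda$ is supported in degrees $\leq 0$. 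Hence it is supported in degrees $\leq 4f$.

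The main point needing care is the degree convention: checking that the top shift lands at $+\sum_j(2+\deg t_j)$ rather than its negative, consistently with the convention that $\gr_\fm$ lives in nonpositive degrees. I will fix this by computing the case $f=1$, $t_0=y_0$ by hand. I will also verify that left/right issues from the noncommutativity $[y_j,z_j]=h_j$ do not affect the Koszul duality. This is fine because $h_j$ is central and $t_j$ is normal modulo $h_j$.
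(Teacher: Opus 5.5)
Your proposal is correct and follows essentially the argument the paper cites: the minimal resolution $G'_\bullet$ of Lemma~\ref{resolution-of-N} has length $2f$, its top term is generated in degrees $-\sum_j(2+\deg t_j)\in[-4f,-3f]$, and dualizing into $\gr \Lambda$, which lives in degrees $\leq 0$, gives both bounds. The full Koszul self-duality computation is more than you need: since $G'_{2f+1}=0$, $\Ext^{2f}_{\gr \Lambda}(N,\gr \Lambda)$ is already a quotient of $\Hom_{\gr \Lambda}(G'_{2f},\gr \Lambda)$, which suffices for both statements (your exact identification is consistent with Corollary~\ref{ess-self-dual}).
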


\begin{proof}
    See \cite[Cor.~2.3.4]{BHH$^+$25c}.
\end{proof}

\begin{lemma}\label{tor-of-N}
    For each $i\geq 0$ we have 
    $$\Tor_i^{\gr \Lambda}(\bF,N)\cong \bigoplus_{\chi\in W_D(\bar{\rho})}(\chi^{-1})^{\oplus m_i}$$
    as $H$-modules, where $m_i$ is defined in Condition~\ref{condition-iii}. 
    In particular, $\Tor_i^{\gr \Lambda}(\bF,N)\cong\Tor^\Lambda_i(\bF,\pi^\vee)$ as $H$-modules. 
    Moreover, $\Tor_i^{\gr \Lambda}(\bF,N)$ is supported in degrees $[-2i,-i]$. 
\end{lemma}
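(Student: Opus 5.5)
The plan is to compute $\Tor_i^{\gr \Lambda}(\bF,N)$ from an explicit Koszul-type resolution. The tensor decomposition $\gr \Lambda/\fb(\chi)\cong\bigotimes_j(\gr \Lambda)_j/\fb(\chi)_j$ lets us work one factor at a time, and then we match the resulting characters with the combinatorics of $W_D(\bar{\rho})$.

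\textbf{Step 1: one factor.} Fix $\chi$ and $j$. If $t_j\in\{y_j,z_j\}$, then $(t_j,h_j)$ is generated by a normal sequence. In $(\gr\Lambda)_j$, $h_j$ is central and regular, and $t_j$ is regular modulo $h_j$. So the minimal resolution is a Koszul complex of length $2$ with ranks $1,2,1$. The generators sit in degrees $-1$ (for $t_j$) and $-2$ (for $h_j$), so $\Tor_1$ lives in degrees $\{-1,-2\}$ and $\Tor_2$ in degree $-3$.

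If $t_j=y_jz_j$, then $\fb(\chi)_j=(y_jz_j,h_j)$, and $y_jz_j$ is central modulo $h_j$. Again we get a Koszul resolution of ranks $1,2,1$, now with generator degrees $-2,-2$ and $\Tor_2$ in degree $-4$. Tracking $H$-weights, $t_j$ carries $\alpha_j^{\pm1}$ or $1$, and $h_j$ carries the trivial character, since $h_j=[y_j,z_j]$.

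\textbf{Step 2: tensor the factors.} Taking the tensor product over $j$ gives a minimal resolution of $\gr\Lambda/\fb(\chi)$ of length $2f$. Its $i$-th term has rank $\binom{2f}{i}$, because it is a Koszul complex on $2f$ elements. Minimality holds since all differentials have entries in the augmentation ideal.

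The degrees check out as follows. Each chosen generator has degree $-1$ or $-2$, so a basis element of $\Tor_i$ has degree in $[-2i,-i]$. The case $t_j=y_jz_j$ contributes $-4$ for the pair, but that is still $\ge -2\cdot 2$, so the bound $[-2i,-i]$ holds uniformly.

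The $H$-character of the basis element indexed by a subset $S\subset\{t_j,h_j\}$ is $\chi^{-1}\prod_{t_j\in S}\mathrm{wt}(t_j)$. Here $\mathrm{wt}(t_j)$ is $\alpha_j$ when $t_j=y_j$, $\alpha_j^{-1}$ when $t_j=z_j$, and $1$ when $t_j=y_jz_j$.

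\textbf{Step 3: the combinatorial identity (main obstacle).} Summing over $\chi$ and taking $r$ copies, we must show that the multiset
$$\{\chi^{-1}\textstyle\prod_{j\in T}\mathrm{wt}_\chi(t_j)\cdot(\text{choices for }h_j)\}$$
gives each $\chi'^{-1}$, $\chi'\in W_D(\bar{\rho})$, with total multiplicity $\binom{2f}{i}$.

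The $h_j$'s contribute trivially, so the requirement becomes
$$\sum_{a+b=i}\binom{f}{b}\,\#\{(\chi,T):|T|=a,\ \chi\prod_{j\in T}\mathrm{wt}_\chi(t_j)^{-1}=\chi'\}=\binom{2f}{i}.$$
By Vandermonde, it suffices to prove that for each $\chi'$ and each $a$ the count is $\binom{f}{a}$. Equivalently, for each $\chi'$ and each subset $T$ of size $a$ there is exactly one $\chi\in W_D(\bar\rho)$ reached.

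The map $(\chi,T)\mapsto\chi'$ moves $\chi$ along the directions $j\in T$ to its neighbour $\chi\alpha_j^{\mp1}$ in $W_D(\bar\rho)$ when one exists, and fixes direction $j$ otherwise. So we need: for fixed $T$, the map $\chi\mapsto\chi\prod_{j\in T}(\ldots)$ is a bijection of $W_D(\bar{\rho})$. This is a cube-structure statement for $W_D(\bar\rho)$ in each coordinate $j$: along direction $j$, the weights pair up into pairs $\{\chi,\chi\alpha_j\}$ or stay as singletons, independently of the other coordinates.

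We will verify this from the $(w,d)$ parametrization and Tables~\ref{table1},~\ref{table2}. Multiplying by $\alpha_j$ changes $d_j$ by one, with genericity preventing collisions. In the nonsplit case, the compatibility with $v_\chi\le v(\bar\rho)$ has to be checked, and this is the step I expect to be hardest. It is the same fact that underlies the well-definedness in Remark~\ref{t_j-well-defined}, and we may mirror the $\GL_2$ argument of \cite{BHH$^+$25c}.

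Once Step 3 is established, comparing with Remark (2) of Section~\ref{sec-3} gives the isomorphism with $\Tor^\Lambda_i(\bF,\pi^\vee)$.
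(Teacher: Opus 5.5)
Your proposal is correct and follows the same route as the paper. The paper's proof just invokes the $\GL_2$ argument: a Koszul resolution of each factor $(\gr\Lambda)_j/\fb(\chi)_j$, tensored over $j$, with $H$-weights tracked ($h_j$ of trivial weight) and degrees landing in $[-2i,-i]$. The combinatorial bijection you isolate in Step 3 is exactly what the paper proves in Lemma~\ref{convex-of-d} and Corollary~\ref{key-lemma-inverse}: the map $\chi\mapsto\chi\prod_{j\in T,\,t_j\neq y_jz_j}\chi_{t_j}^{-1}$ stays in $W_D(\bar{\rho})$ and is an involution, with Lemma~\ref{key-lemma} forcing $w'=w$ and the non-split case handled by the coordinatewise formula for $\Psi_w$ in Tables~\ref{table3} and~\ref{table4}, so your Vandermonde count goes through.
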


\begin{proof}
    See \cite[Lemma~2.3.5]{BHH$^+$25c}. 
\end{proof}

\subsection{A detailed study of quaternionic Serre weights}

First, we prove a result about the multiplicity of $N$, which requires the following key lemma. 

\begin{lemma}\label{key-lemma}
    Suppose that $\bar{\rho}$ is $(m+1)$-generic,  
    and suppose that $\chi,\chi'\in W_D(\bar{\rho})$ and that there exist integers $i_j$ with $|i_j|\leq m$ for all $j$ such that $\chi\prod_{j=0}^{f-1}\alpha_j^{i_j}=\chi'$. 
    Write $\fa(\chi)=(t_0,\ldots,t_{f-1})$. Then we have $|i_j|\leq 1$ for all $j$, and $i_j=-1$ (resp. $i_j=1$) implies $t_j=y_j$ (resp. $t_j=z_j$). 
\end{lemma}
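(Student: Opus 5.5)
We will argue by comparing exponents of $\iota$ directly. Write $\chi=\chi_{w,d}$ and $\chi'=\chi_{w',d'}$ using the explicit parametrization of $W_D(\bar{\rho})$ (or of $W_D(\bar{\rho}^{ss})$ in the non-split reducible case, since $W_D(\bar{\rho})\subseteq W_D(\bar{\rho}^{ss})$ and the definition of $t_j$ only involves membership in $W_D(\bar{\rho})$). Since $\alpha_j=\iota^{p^j(q-1)}$, the hypothesis $\chi\prod_j\alpha_j^{i_j}=\chi'$ becomes a congruence modulo $q^2-1$:
\[
\sum_{j}(q^{w_j}-q^{w'_j})p^jr_j+(1-q)\sum_j p^j(d_j-d'_j)+(q-1)\sum_j p^j i_j\equiv 0 \pmod{q^2-1}.
\]
The first step is to reduce this modulo $q-1$. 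Because $q\equiv 1$, the last two sums vanish, and so does the first, so this gives no information. Instead, we reduce modulo $q+1$, where $q\equiv-1$. The term $q^{w_j}-q^{w'_j}$ becomes $0$ or $\pm2$, giving $\sum_{j:w_j\ne w'_j}\pm2p^jr_j\equiv 2\sum_j p^j(d_j-d'_j-i_j)\pmod{q+1}$. Both sides are integers of absolute value well below $q+1$, thanks to the genericity bound $m+1\le r_j\le p-3-(m+1)$ and $|d_j-d'_j-i_j|\le m+2$. This is where $(m+1)$-genericity enters. I then expect a digit-by-digit comparison in base $p$ (carrying being impossible by genericity) to force $w=w'$ first: a $p$-adic digit $\pm r_j$ cannot equal a small number $d_j-d'_j-i_j$ modulo $p$ when $r_j$ is far from $0$ and $p$. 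Once $w=w'$, the same argument gives $d_j-d'_j=i_j$ for all $j$, as an identity of integers rather than a congruence.

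The second step combines this with the combinatorial relations. From $w=w'$ and Relations~\ref{RI}, \ref{RII} or \ref{RII'}, $d_j$ is constrained by $(w_{j-1},w_j)$: for $j>0$, if $w_{j-1}\ne w_j$ then $d_j$ is determined ($-1$ if $w_j=1$, $1$ if $w_j=0$), and if they are equal then $d_j\in\{0,\pm1\}$ is forced to be $0$ or the value matching $w_j$. So for fixed $w$, each $d_j$ ranges over a two-element set $\{0,\epsilon_j\}$ or a single value, with $\epsilon_j=-1$ if $w_j=1$ and $\epsilon_j=1$ if $w_j=0$. Hence $d_j-d'_j=i_j\in\{0,\pm\epsilon_j\}$, so $|i_j|\le1$. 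Moreover, if $i_j\neq0$, then $\chi\alpha_j^{i_j}$ is the character $\chi_{w,d''}$ with $d''$ differing from $d$ only in coordinate $j$, namely $d''=d$ with $d_j$ replaced by $d'_j$. This is a valid pair, and for the reducible non-split case it lies in $W_D(\bar{\rho})$, because it has the same $j$-th coordinate as $\chi'$ and all other coordinates as $\chi$. I will verify this last claim using the subcube description $v_\chi\le v(\bar{\rho})$ and Tables~\ref{table1} and \ref{table2}; the minimal $v$ should be computed coordinatewise. Then $i_j=-1$ means $\chi\alpha_j^{-1}\in W_D(\bar{\rho})$, so $t_j=y_j$, and $i_j=1$ means $t_j=z_j$. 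Well-definedness of $t_j$ (Remark~\ref{t_j-well-defined}) ensures the two cases do not overlap.

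The main obstacle I expect is the first step: making the congruence argument rigorous, with careful handling of the wrap-around at $j=0$ (the factor $q$ shifts $p$-digits and the cyclic index $f-1\to0$ as in Relation~\ref{RII}) and of the irreducible case, where $r_0$ has slightly different bounds. A second, lesser obstacle is the non-split case, where one must check that changing a single coordinate of $d$ stays within $W_D(\bar{\rho})$. For that I would first try to show, from the tables, that $v_{\chi''}\le \max(v_\chi,v_{\chi'})$ coordinatewise.
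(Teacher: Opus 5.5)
Your first step has a genuine gap: reducing modulo $q+1$ throws away exactly the information you need. Since $q^{w_j}-q^{w'_j}=(q-1)(w_j-w'_j)$, the left side of your congruence is $(q-1)X$, where $X=\sum_j p^jc_j$ and $c_j=(w_j-w'_j)r_j+d'_j-d_j+i_j$. So the hypothesis is equivalent to $X\equiv 0\pmod{q+1}$. Reducing modulo $q+1$ instead only gives $2X\equiv 0\pmod{q+1}$. This is strictly weaker, because $q+1$ is even: the moduli $q-1$ and $q+1$ are not coprime, and the reduction modulo $q-1$ carries no information.

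Your claim that both sides are well below $q+1$ is also false, since $\sum_{j:w_j\ne w'_j}2p^jr_j$ can be close to $2q$. So you only get $2X\in\{0,\pm(q+1)\}$, and no size argument excludes $X=\pm(q+1)/2$. The fix, which is what the paper does, is to divide by $q-1$ first. Then $(m+1)$-genericity gives $|X|\le\sum_j p^j(r_j+m+2)<q+1$, which forces $X=0$ as an integer. Since $|c_j|\le p-1$ (in the irreducible case, $r_0\le p-3-m$ still gives this), uniqueness of base-$p$ expansions yields $c_j=0$ for every $j$. There is then no wrap-around issue at $j=0$.

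Second, even granting $c_j=0$, the bound you invoke does not force $w=w'$. You compare $|d_j-d'_j-i_j|\le m+2$ against $r_j\ge m+1$, but for $w_j=1$, $w'_j=0$ the equation $r_j=d_j-d'_j-i_j$ is compatible with, say, $r_j=m+1$, $d_j-d'_j=1$, $i_j=-m$. The crude bound would need $(m+3)$-genericity. You must already use at this point the sign constraints from Relations~\ref{RI}, \ref{RII}, \ref{RII'}, which you only bring in at step two: $w_j=1$ forces $d_j\in\{0,-1\}$ and $w'_j=0$ forces $d'_j\in\{0,1\}$. Hence $r_j=d_j-d'_j-i_j\le m$, a contradiction.

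With these two repairs, the rest of your proposal coincides with the paper's argument via Lemma~\ref{convex-of-d} and Tables~\ref{table3} and~\ref{table4}. That is: $i_j=d_j-d'_j$, hence $|i_j|\le 1$; then change a single coordinate of $d$; and handle the non-split case by computing the minimal $v$ coordinatewise from $(w_{j-1},w_j,d_j)$.
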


\begin{proof}
    Write $\chi=\chi_{w,d}$, $\chi'=\chi_{w',d'}$. Then the equation $\chi\prod_{j=0}^{f-1}\alpha_j^{i_j}=\chi'$ becomes 
    $$\sum_{j=0}^{f-1}q^{w_j}p^jr_j+(1-q)\sum_{j=0}^{f-1}p^j(d_j-i_j)\equiv\sum_{j=0}^{f-1}q^{w'_j}p^jr_j+(1-q)\sum_{j=0}^{f-1}p^jd'_j\mod(q^2-1).$$
    That is, 
    $$\sum_{j=0}^{f-1}(q^{w_j}-q^{w'_j})p^jr_j- (1-q)\sum_{j=0}^{f-1}p^j(d'_j-d_j+i_j)\equiv 0\mod(q^2-1).$$
    Note that
    $$|\sum_{j=0}^{f-1}(q^{w_j}-q^{w'_j})p^jr_j- (1-q)\sum_{j=0}^{f-1}p^j(d'_j-d_j+i_j)|\leq (q-1)\sum_{j=0}^{f-1}p^j(r_j+m+2)<q^2-1,$$ 
    since $\bar{\rho}$ is $(m+1)$-generic. So we have 
    $$\sum_{j=0}^{f-1}(q^{w_j}-q^{w'_j})p^jr_j- (1-q)\sum_{j=0}^{f-1}p^j(d'_j-d_j+i_j)=0.$$
    Using that $q^{w_j}-q^{w'_j}=(q-1)(w_j-w_j')$, we have 
    $$\sum_{j=0}^{f-1}p^j((w_j-w'_j)r_j+d'_j-d_j+i_j)=0,$$
    and hence 
    $$r_j(w_j-w_j')+d_j'-d_j+i_j=0,$$
    for each $0\leq j\leq f-1$ since $\bar{\rho}$ is $(m+1)$-generic. 
    
    If $w_j\ne w'_j$, 
    we may assume that $w_{j}=1, w'_{j}=0$.
    Assume first that $j>0$. Then, by Relation~\ref{RI}, we have $d_j\in\{0,-1\}$ and $d_j'\in\{0,1\}$, 
    so $r_j=d_j-d_j'-i_j\leq m$, contradicting the assumption that $\bar{\rho}$ is $(m+1)$-generic. 
    Now suppose that $j=0$. If $\bar{\rho}$ is irreducible, 
    Relation~\ref{RII} again gives $d_j\in\{0,-1\}$ and $d_j'\in\{0,1\}$,
    yielding the same contradiction. 
    The case where $\bar{\rho}$ is reducible is similar. Thus, we conclude that we must have $w=w'$. 
    
    Now the equation becomes 
    $$\sum_{j=0}^{f-1}p^j(d'_j-d_j+i_j)=0.$$
    We conclude that $i_j=d_j-d'_j$. 
    When $w_j=0$, $d_{j}\in\{0,1\}$; when $w_j=1$, $d_j\in\{0,-1\}$. 
    So we always have $|i_j|\leq 1$. 

    To finish the proof, it suffices to prove the following lemma. 
    
    \begin{lemma}\label{convex-of-d}
    For a fixed $w$, let $S_w=S_w(\bar{\rho})$ be the set of all $d\in\{0,\pm1\}^f$ such that 
    $(w,d)$ corresponds to a $\chi\in W_{D}(\bar{\rho})$. 
    If $d,d'\in S_w$, then for any $f$-tuple $d''$ such that 
    $d''_j=d_j$ or $d_j'$, we have $d''\in S_w$. 
    \end{lemma}
    
    Suppose that Lemma~\ref{convex-of-d} holds. Then if $i_j=-1$, we have $d_j'=d_j+1$. 
    Define $d''$ by $d''_k=d_k+\delta_{j,k}$. Then $d''\in S_{w}$ by Lemma~\ref{convex-of-d}. 
    Note that $\chi_{w,d''}$ is exactly $\chi\alpha_j^{-1}$, so we have $t_j=y_j$. 
    Similarly $i_j=1$ implies $t_j=z_j$. 
\end{proof}

\begin{proof}[Proof of Lemma~\ref{convex-of-d}]
    First, consider the case where $\bar{\rho}$ is semisimple. 
    We need to show that $(w,d'')$ still satisfies Relation~\ref{RI} together with Relation~\ref{RII} or Relation~\ref{RII'}, 
    which is tautological. We check Relation~\ref{RI}, for example. 
    For $j>0$, note that Relation~\ref{RI} only involves $w_{j-1}$, $w_j$, and $d_j$. 
    Since both $(w_{j-1},w_j,d_j)$ and $(w_{j-1},w_j,d'_j)$ satisfy Relation~\ref{RI}, 
    and since $d''_j=d_j$ or $d_j'$, the tuple $(w_{j-1},w_j,d_j'')$ also satisfies this relation. 

    Now, assume that $\bar{\rho}$ is reducible non-split. 
    Recall that, in Section~\ref{quat-serre-weight}, we defined $V_{\chi}=\{v:\sigma_v(\bar{\rho}^{ss})\in\JH(\overline{\Theta([\chi])})\}$ and $v_\chi$ to be its minimal element. 
    We also showed that $W_D(\bar{\rho})=\{\chi\in W_D(\bar{\rho}^{ss}): v_{\chi}\leq v(\bar{\rho})\}$. 
    Now define the map 
    $$\Psi_w:S_w(\bar{\rho}^{ss})\to \{0,1\}^f,\quad d\mapsto v_{\chi},$$
    where $\chi=\chi_{w,d}$. 
    Then $S_w(\bar{\rho})=\{d\in S_w(\bar{\rho}^{ss}):\Psi_w(d)\leq v(\bar{\rho})\}$. 
    Hence it suffices to show that $\Psi_{w}(d'')\leq v(\bar{\rho})$. 

    From Equation~\ref{w_i} and Tables~\ref{table1} and~\ref{table2}, we can deduce the formula for $\Psi_{w}$. 
    We consider two cases to illustrate the general idea. Consider the case $j>0$ and $(w_{j-1},w_j)=(0,0)$. 
    Then we have $u_{j-1}+v_{j-1}\equiv u_j+v_j\equiv 0 \mod 2$, 
    so $(u_{j-1},u_j)=(v_{j-1},v_j)$, and we can read from Table~\ref{table1} that 
    $$d_j=\begin{cases}
        0, \quad (v_{j-1},v_j)=(0,0)\, or \,(0,1),\\
        1, \quad (v_{j-1},v_j)=(1,0)\, or\, (1,1).\\
    \end{cases}$$
    So we have $d_j=v_{j-1}$. That is, for any $v\in V_\chi$,  
    we must have $v_{j-1}=d_j$. 
    Hence $\Psi_{w}(d)_{j-1}=d_j$ by definition. Now consider another case $j>0$ and $(w_{j-1},w_j)=(0,1)$. 
    The same computation shows that $d_j$ is always $-1$. So for each $a\in \{0,1\}$, 
    we can find a $(u,v)$ such that $v_{j-1}=a$ and 
    all the relations given by Equation~\ref{w_i} and Tables~\ref{table1} and~\ref{table2} are satisfied (recall that $w$ and $d$ are fixed). 
    So we have $\Psi_{w}(d)_{j-1}=0$. The general formula is given as follows.

    For $j>0$:
    \begin{table}[H]
        \centering
        \caption{}
        \label{table3}
        \begin{tabular}{|c|c|c|c|c|}
            \hline
           $(w_{j-1}, w_j)$ &(0,0) & $(0,1)$ & $(1,0)$ & $(1,1)$ \\ \hline
            
           $d_j$ & $v_{j-1}$ & $-1$ & $1$ & $-v_{j-1}$\\ \hline
            
            $\Psi_{w}(d)_{j-1}$ & $d_j$ & $0$&$0$&$-d_j$\\ \hline
        \end{tabular}
    \end{table}

For $j=0$:
\begin{table}[H]
    \centering
    \caption{}
    \label{table4}
    \begin{tabular}{|c|c|c|c|c|}
        \hline
       $(w_{f-1}, w_0)$ &(0,0) & $(0,1)$ & $(1,0)$ & $(1,1)$ \\ \hline
        
       $d_0$ & $1$ & $-v_{f-1}$ & $v_{f-1}$ & $-1$ \\ \hline
        
        $\Psi_{w}(d)_{f-1}$ & $0$ &$-d_0$& $d_0$&$0$\\ \hline
    \end{tabular}
\end{table}

In any case, we have $\Psi_w(d'')_{j}\leq\max\{\Psi_w(d)_{j}, \Psi_w(d')_j\}\leq v(\bar{\rho})_j$, as desired. 
\end{proof}

\begin{remark}\label{t_j-well-defined}
    In the proof of Lemma~\ref{key-lemma}, we showed that it cannot happen that $\chi,\chi\alpha_j,\chi\alpha_j^{-1}\in W_D(\bar{\rho})$ simultaneously, 
    since, for a fixed $w$, either $d_j\in\{0,1\}$ or $d_j\in\{0,-1\}$. 
    So $t_j$ is well-defined.
\end{remark}

\begin{corollary}\label{key-lemma-inverse}
    For $\chi\in W_D(\bar{\rho})$ with $\fa(\chi)=(t_0,\ldots,t_{f-1})$, 
    let $S$ be a subset of $\{j:t_j\ne y_jz_j\}$. 
    Then $\chi':=\chi\prod_{j\in S}\chi_{t_j}^{-1}\in W_{D}(\bar{\rho})$, where $\chi_{t_j}$ is the eigencharacter of $t_j$. 
    Moreover, if we write $\fa(\chi')=(t'_0,\ldots,t'_{f-1})$, then $t'_j=\begin{cases}
        \frac{y_jz_j}{t_j}\quad &j\in S,\\
        t_j\quad &j\notin S.\\
    \end{cases}$
\end{corollary}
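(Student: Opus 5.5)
We work in the $(w,d)$-parametrization and reduce everything to Lemma~\ref{convex-of-d}. Write $\chi=\chi_{w,d}$. By definition of $t_j$, we have $t_j=y_j$ exactly when $\chi\alpha_j^{-1}\in W_D(\bar{\rho})$, and $t_j=z_j$ exactly when $\chi\alpha_j\in W_D(\bar{\rho})$. The eigencharacter $\chi_{t_j}$ is $\alpha_j$ for $t_j=y_j$ and $\alpha_j^{-1}$ for $t_j=z_j$, so $\chi\chi_{t_j}^{-1}\in W_D(\bar{\rho})$ for each $j$ with $t_j\neq y_jz_j$.

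The first step is to translate this into $d$. Since $\alpha_j=\iota^{p^j(q-1)}$, multiplying $\chi_{w,d}$ by $\alpha_j^{-1}$ replaces $d$ by $d+e_j$, and multiplying by $\alpha_j$ replaces $d$ by $d-e_j$, with $w$ unchanged. Now apply Lemma~\ref{key-lemma} with $m=1$ (allowed since $\bar\rho$ is $5$-generic) to $\chi$ and $\chi\alpha_j^{\mp1}$. The argument of its proof gives $w'=w$ and shows that the second character is $\chi_{w,d\pm e_j}$, with $d\pm e_j\in S_w$. Concretely, $t_j=y_j$ means $d_j=0$, $w_j=0$ and $d+e_j\in S_w$; $t_j=z_j$ means $d_j=0$, $w_j=1$ and $d-e_j\in S_w$. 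Here we use that $d_j\in\{0,1\}$ when $w_j=0$ and $d_j\in\{0,-1\}$ when $w_j=1$, as in the proof of Lemma~\ref{key-lemma}.

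For the first assertion, define $d'$ by $d'_j=d_j+\epsilon_j$ for $j\in S$ and $d'_j=d_j$ otherwise. Here $\epsilon_j=+1$ if $t_j=y_j$ and $\epsilon_j=-1$ if $t_j=z_j$. Each coordinate of $d'$ agrees with the corresponding coordinate of one of the elements $d$ or $d+\epsilon_je_j$ of $S_w$. The convexity statement of Lemma~\ref{convex-of-d} only allows choosing coordinatewise between two elements, so we iterate it. Inductively, if $d^{(k)}\in S_w$ has been modified on a subset $S_k\subset S$, we combine $d^{(k)}$ with $d+\epsilon_je_j$ for a new $j\in S$, taking coordinate $j$ from the latter and all other coordinates from $d^{(k)}$. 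This yields $d'\in S_w$, so $\chi'=\chi_{w,d'}\in W_D(\bar{\rho})$. It remains to check that $\chi_{w,d'}=\chi\prod_{j\in S}\chi_{t_j}^{-1}$, which follows from the formula for $\chi_{w,d}$.

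For the description of $\fa(\chi')$, fix $j$. If $j\in S$ with $t_j=y_j$, then $\chi'\alpha_j=\chi_{w,d'-e_j}$, and $d'-e_j$ takes the $j$-th coordinate of $d$ and the rest from $d'$. By convexity, $d'-e_j\in S_w$, hence $t'_j=z_j=y_jz_j/t_j$; the case $t_j=z_j$ is symmetric. If $j\notin S$, then $d'_j=d_j$. When $t_j=y_j$, the element $d'+e_j$ lies in $S_w$ by convexity applied to $d'$ and $d+e_j$, so $t'_j=y_j$; similarly for $z_j$. When $t_j=y_jz_j$, we must show that neither $\chi'\alpha_j^{\pm1}$ lies in $W_D(\bar\rho)$. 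Suppose, say, $\chi'\alpha_j^{-1}=\chi_{w,d'+e_j}\in S_w$. Combining it with $d$ by convexity, taking coordinate $j$ from $d'+e_j$ and the others from $d$, gives $d+e_j\in S_w$, so $\chi\alpha_j^{-1}\in W_D(\bar\rho)$, contradicting $t_j=y_jz_j$. Here we use Lemma~\ref{key-lemma} again to know that any element of $W_D(\bar\rho)$ of the form $\chi'\alpha_j^{\pm1}$ has the same $w$. The $\alpha_j$ case is identical.

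The main obstacle is bookkeeping: making sure that every element of $W_D(\bar\rho)$ differing from $\chi$ or $\chi'$ by a product of the $\alpha_j^{\pm1}$ is indeed of the form $\chi_{w,\cdot}$ with the same $w$, so that Lemma~\ref{convex-of-d} applies. This is exactly the content of Lemma~\ref{key-lemma} under the genericity assumption.
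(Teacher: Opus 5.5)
Your argument is correct and follows the paper's approach: the paper's proof only says the corollary follows from Lemma~\ref{convex-of-d}, and you have written out the implicit steps (Lemma~\ref{key-lemma} to keep $w$ fixed, convexity iterated over $S$, and the three cases for $\fa(\chi')$). One slip, which does not affect the proof: your ``concretely'' sentence is wrong. The condition $t_j=y_j$ means exactly $d+e_j\in S_w$, which allows $(w_j,d_j)=(0,0)$ or $(1,-1)$, and $t_j=z_j$ allows $(1,0)$ or $(0,1)$; since you only ever use $d\pm e_j\in S_w$, the argument stands.
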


\begin{proof}
   It follows directly from Lemma~\ref{convex-of-d}. 
\end{proof}

\begin{corollary}\label{N-multi-free}
    Suppose that $n>1$ and $\bar{\rho}$ is $(2n-1)$-generic. For each character $\chi$ of $H$ such that $[N/I^{(n)}N:\chi]\ne 0$, we have $[N/I^{(n)}N:\chi]=r$. 
\end{corollary}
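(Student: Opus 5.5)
We will compute the $H$-eigencharacters of $N/I^{(n)}N$ explicitly. Since $N=(\bigoplus_{\chi\in W_D(\bar{\rho})}\chi^{-1}\otimes R/\fa(\chi))^{\oplus r}$, we have
$$N/I^{(n)}N\cong\Bigl(\bigoplus_{\chi\in W_D(\bar{\rho})}\chi^{-1}\otimes R/(\fa(\chi)+I^{(n)})\Bigr)^{\oplus r},$$
and $R/(\fa(\chi)+I^{(n)})\cong\bigotimes_j \bF[y_j,z_j]/(t_j,y_j^n,z_j^n)$. This ring has an $\bF$-basis of monomials $\prod_j m_j$, where $m_j$ depends on $t_j$:
\begin{itemize}
\item if $t_j=y_j$, then $m_j=z_j^{a}$ with $0\leq a\leq n-1$;
\item if $t_j=z_j$, then $m_j=y_j^{a}$ with $0\leq a\leq n-1$;
\item if $t_j=y_jz_j$, then $m_j$ is $1$, $y_j^a$ or $z_j^a$ with $1\leq a\leq n-1$.
\end{itemize}
Each monomial is an $H$-eigenvector. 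By the proposition on eigencharacters, $y_j$ has eigencharacter $\alpha_j$ and $z_j$ has $\alpha_j^{-1}$. So the monomial $\prod_j y_j^{a_j}z_j^{b_j}$ (with $a_jb_j=0$) in the $\chi$-summand has eigencharacter $\chi^{-1}\prod_j\alpha_j^{i_j}$, where $i_j=a_j-b_j$ and $|i_j|\leq n-1$. Moreover, the monomial is determined by $(i_j)_j$. Hence $[N/I^{(n)}N:\psi]=r\cdot\#\{(\chi,(i_j)) \text{ admissible}: \chi^{-1}\prod\alpha_j^{i_j}=\psi\}$, and it suffices to show that for each $\psi$ at most one pair $(\chi,(i_j))$ occurs.

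Suppose two pairs $(\chi,(i_j))$ and $(\chi',(i'_j))$ give the same $\psi$. Then $\chi'=\chi\prod_j\alpha_j^{i'_j-i_j}$ with $|i'_j-i_j|\leq 2n-2$. Since $\bar{\rho}$ is $(2n-1)$-generic, Lemma~\ref{key-lemma} applies with $m=2n-2$. It gives $|i'_j-i_j|\leq1$ for all $j$, with $i'_j-i_j=-1$ forcing $t_j=y_j$ and $i'_j-i_j=1$ forcing $t_j=z_j$. Applying the lemma symmetrically with $\chi,\chi'$ exchanged, $i_j-i'_j=-1$ forces $t'_j=y_j$, and so on.

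We then argue coordinatewise. Suppose $i'_j-i_j=-1$, so $t_j=y_j$. The $\chi$-summand only allows $z_j$-powers in the $j$-th slot, so $i_j\leq0$ and hence $i'_j\leq -1$. On the other hand, Lemma~\ref{key-lemma} applied to $\chi'$ with exponent $i_j-i'_j=1$ forces $t'_j=z_j$. Then the $\chi'$-summand only allows $y_j$-powers, so $i'_j\geq 0$. This is a contradiction. The case $i'_j-i_j=1$ is symmetric. Hence $i_j=i'_j$ for all $j$, so $\chi=\chi'$, and the monomial is unique.

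The main obstacle is getting the degree bound $|i'_j-i_j|\leq 2n-2$ to match the genericity hypothesis $m+1=2n-1$ of Lemma~\ref{key-lemma}, and checking the sign conventions for the eigencharacters of $t_j$. For the latter, Corollary~\ref{key-lemma-inverse} confirms that $t'_j=y_jz_j/t_j$ when $\chi'=\chi\chi_{t_j}^{-1}$, which is consistent with the contradiction above.
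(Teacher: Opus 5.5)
Your proof is correct and follows the paper's argument essentially step for step. You reduce to multiplicity-freeness of the $r=1$ summand, apply Lemma~\ref{key-lemma} with $m=2n-2$ to $\chi'=\chi\prod_j\alpha_j^{i'_j-i_j}$ in both directions, and get a sign contradiction in any coordinate with $i_j\neq i'_j$ from the constraints $i_j\leq 0$ when $t_j=y_j$ and $i_j\geq 0$ when $t_j=z_j$. The only difference is cosmetic: you state explicitly that each monomial is determined by its exponent tuple, which the paper leaves implicit.
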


\begin{proof}
    It suffices to show that $N^{(1)}/I^{(n)}N^{(1)}$ is multiplicity-free, with $N^{(1)}=\bigoplus_{\chi\in W_{D}(\bar{\rho})}\chi^{-1}\otimes R/\fa(\chi)$. 
    Note that the characters of $H$ occurring in $\chi^{-1}\otimes R/(\fa(\chi)+I^{(n)})=\chi^{-1}\otimes (\bigotimes_{j=0}^{f-1}\bF[y_j,z_j]/(t_j,y_j^n,z_j^n)) $ are given by $\chi^{-1}\prod_{j=0}^{f-1}\alpha_j^{i_j}$ with $|i_j|\leq n-1$, and $i_j\leq 0$ if $t_j=y_j$ (resp. $i_j\geq 0$ if $t_j=z_j$). 
    Suppose that $N^{(1)}/I^{(n)}N^{(1)}$ is not multiplicity-free. Then there exist $\chi,\chi'$ and $i,l\in \{0,\pm1,\ldots,\pm(n-1)\}^f$ such that $\chi^{-1}\prod_{j=0}^{f-1}\alpha_j^{i_j}=\chi'^{-1}\prod_{j=0}^{f-1}\alpha_j^{l_j}$ with $(\chi,i)\ne(\chi',l)$. 
    In particular, $i\ne l$, so we may assume $l_{j_0}>i_{j_0}$ for some $j_0$. 
    Write $\fa(\chi)=(t_j)$ and $\fa(\chi')=(t_j')$, 
    and apply Lemma~\ref{key-lemma} to $\chi\prod_{j=0}^{f-1}\alpha_j^{l_j-i_j}=\chi'$. 
    We conclude that $l_{j_0}-i_{j_0}=1$ and $t_{j_0}=z_{j_0}$. 
    Interchanging the roles of $\chi$ and $\chi'$, we have $t'_{j_0}=y_{j_0}$. 
    But then $i_{j_0}\geq 0\geq l_{j_0}$, a contradiction. 
\end{proof}

Next we prove a result about essential self-duality. 
Note that, for $\chi\in W_D(\bar{\rho})$, we have $\chi^{q}\in W_D(\bar{\rho})$ since $\JH(\overline{\Theta([\chi])})=\JH(\overline{\Theta([\chi^q])})$.

\begin{lemma}\label{chi^q}
    Let $\chi\in W_D(\bar{\rho})$ with $\fa(\chi)=(t_0,\ldots,t_{f-1})$. 

    (1) Write $\fa(\chi^q)=(t_0^-,\ldots,t_{f-1}^-)$. 
    Then $t_j^-=\begin{cases}
        \frac{y_jz_j}{t_j}\quad & t_j\ne y_jz_j,\\
        y_jz_j\quad & t_j=y_jz_j.\\
    \end{cases}$
    
    (2) If $\chi$ corresponds to $(w,d)$, then
    $\chi^{q}$ corresponds to $(w^-, d^-)$ with $w^-_j=1-w_j, d^-_j=-d_j$. 
\end{lemma}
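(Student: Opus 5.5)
I would prove (2) first, since (1) follows from it.

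\textbf{Part (2).} Write $\chi=\chi_{w,d}=\iota^{e}$ with
\[
e=\sum_j q^{w_j}p^jr_j+(1-q)\sum_j p^jd_j .
\]
Then $\chi^q=\iota^{qe}$, and I reduce $qe$ modulo $q^2-1$.

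\begin{itemize}
\item On the first sum, $q\cdot q^{w_j}\equiv q^{1-w_j}$, since $q^2\equiv 1$.
\item On the second sum, $q(1-q)=q-q^2\equiv q-1=(1-q)\cdot(-1)$.
\end{itemize}

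Hence $qe\equiv \sum_j q^{1-w_j}p^jr_j+(1-q)\sum_j p^j(-d_j)$, which is precisely the exponent attached to $(w^-,d^-)$.

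It remains to see that $(w^-,d^-)$ is an admissible pair, i.e.\ that $\chi^q\in W_D(\bar\rho)$ is indexed by this pair. Membership itself is noted just before the lemma. The parametrisation $(w,d)\mapsto\chi_{w,d}$ is injective on admissible pairs; this is the computation in Lemma~\ref{key-lemma} with all $i_j=0$, using genericity. So $\chi^q=\chi_{w^-,d^-}$ as long as $(w^-,d^-)$ is one of the parameters.

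In the semisimple cases I can also check this directly:
\begin{itemize}
\item Relation~\ref{RI} is invariant under $w\mapsto 1-w$, $d\mapsto -d$.
\item So is \ref{RII} in the irreducible case, and so is \ref{RII'}.
\end{itemize}
In the non-split case, $\chi^q\in W_D(\bar\rho)\subset W_D(\bar\rho^{ss})$ is therefore indexed by $(w^-,d^-)$.

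\textbf{Part (1).} By definition, $t_j=y_j$ exactly when $\chi\alpha_j^{-1}\in W_D(\bar\rho)$, and $t_j=z_j$ exactly when $\chi\alpha_j\in W_D(\bar\rho)$.

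Since $\chi\mapsto\chi^q$ is an involution of $W_D(\bar\rho)$ and $\alpha_j^q=\alpha_j^{-1}$ (indeed $\alpha_j^q=\alpha_{j+f}=\alpha_j^{-1}$), I get
\[
(\chi\alpha_j^{-1})^q=\chi^q\alpha_j .
\]
Thus $\chi\alpha_j^{-1}\in W_D(\bar\rho)$ if and only if $\chi^q\alpha_j\in W_D(\bar\rho)$. The first condition says $t_j=y_j$, and the second says $t_j^-=z_j$.

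Symmetrically, $t_j=z_j$ if and only if $t_j^-=y_j$. In the remaining case neither neighbour lies in $W_D(\bar\rho)$ for $\chi$, hence neither does for $\chi^q$, so $t_j=t_j^-=y_jz_j$.

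\textbf{Main obstacle.} Only the identification in (2) in the non-split case needs care. If one wanted a direct check instead of the argument above, one would verify from Tables~\ref{table3} and \ref{table4} that $\Psi_{w^-}(d^-)=\Psi_w(d)$. However, (1) only uses that $\chi\mapsto\chi^q$ preserves $W_D(\bar\rho)$, which the paper already states, so (1) does not depend on this.
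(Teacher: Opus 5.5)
Your proposal is correct and follows essentially the same route as the paper. Part (2) is the identical reduction of $q\cdot\bigl(\sum_j q^{w_j}p^jr_j+(1-q)\sum_j p^jd_j\bigr)$ modulo $q^2-1$, and part (1) is the same observation that $(\chi\alpha_j^{\mp1})^q=\chi^q\alpha_j^{\pm1}$, combined with the stability of $W_D(\bar\rho)$ under $\chi\mapsto\chi^q$. Your extra remarks make explicit two details the paper leaves implicit: using $\chi^{q^2}=\chi$ to obtain the converse implications needed in the case $t_j=y_jz_j$, and checking that $(w^-,d^-)$ again satisfies Relation~\ref{RI} together with Relation~\ref{RII} or~\ref{RII'}.
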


\begin{proof}
    (1) Note that if $\chi\alpha_j^{-1}\in W_D(\bar{\rho})$, then 
     $\chi^q\alpha_j=(\chi\alpha_j^{-1})^q\in W_D(\bar{\rho})$. 
     Similarly, $\chi\alpha_j\in W_D(\bar{\rho})$ implies $\chi^q\alpha_j^{-1}\in W_D(\bar{\rho})$. 

     (2) We have $\chi=\iota^{\sum_{j=0}^{f-1}q^{w_j}p^jr_j+(1-q)\sum_{j=0}^{f-1}p^jd_j}$. 
     Note that 
     $$\begin{aligned}
        q(\sum_{j=0}^{f-1}q^{w_j}p^jr_j+(1-q)\sum_{j=0}^{f-1}p^jd_j)&={\sum_{j=0}^{f-1}q^{1+w_j}p^jr_j+(1-q)q\sum_{j=0}^{f-1}p^jd_j}\\
        &\equiv \sum_{j=0}^{f-1}q^{1-w_j}p^jr_j+(1-q)\sum_{j=0}^{f-1}p^j(-d_j)\mod q^2-1. \\
     \end{aligned}$$
     so we have $\chi^q=\iota^{\sum_{j=0}^{f-1}q^{w_j^-}p^jr_j+(1-q)\sum_{j=0}^{f-1}p^jd_j^-}$, as desired. 
\end{proof}

\begin{definition}
    For $\chi=\chi_{w,d}\in W_D(\bar{\rho})$ with $\fa(\chi)=(t_0,\ldots,t_{f-1})$, let $S=\{j:t_j\ne y_jz_j\}$ and 
    define $\eta_{\chi}=\prod_{j\in S}\chi_{t_j}$. 
    Moreover, let $\chi'=\chi\eta_{\chi}^{-1}$ and define $\chi^*=(\chi')^q$. 
\end{definition}

Write $\chi'=\chi_{w',d'}$ and $\chi^*=\chi_{w^*,d^*}$. We know that $w'=w$ by the proof of Lemma~\ref{key-lemma}, 
and we have $w^*=w^{-}$ and $d^*=(d')^-$. 

\begin{lemma}
    (1) We have $\fa(\chi^*)=\fa(\chi)$, and $\chi^{**}=\chi$. In particular, $\chi\mapsto\chi^*$ is a bijection on $W_D(\bar{\rho})$. 

    (2) We have $\chi\chi^*=\eta_\chi\cdot(\eta_c\circ \Nm_{\bF_{q^2}/\bF_q})$, where $\eta_c$ is the central character defined in Remark~\ref{cent-char}.
\end{lemma}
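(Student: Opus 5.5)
We work throughout with the parametrization $\chi=\chi_{w,d}$ and the two results just proved: Corollary~\ref{key-lemma-inverse}, which says that passing from $\chi$ to $\chi'=\chi\eta_\chi^{-1}$ swaps $t_j\leftrightarrow y_jz_j/t_j$ for $j\in S$ and keeps the other $t_j$, and Lemma~\ref{chi^q}, which describes $\chi\mapsto\chi^q$ on both $\fa$ and $(w,d)$.

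\textbf{Part (1).} First we compute $\fa(\chi')$. Since $S=\{j:t_j\ne y_jz_j\}$, Corollary~\ref{key-lemma-inverse} gives $t'_j=y_jz_j/t_j$ for $j\in S$ and $t'_j=t_j=y_jz_j$ for $j\notin S$. Next, Lemma~\ref{chi^q}(1) applied to $\chi'$ swaps $y_j$ and $z_j$ again on $S$ and leaves $y_jz_j$ unchanged. Hence $\fa(\chi^*)=\fa(\chi)$.

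For $\chi^{**}=\chi$, note that the set $S$ attached to $\chi^*$ is the same $S$, since it depends only on $\fa$. For $j\in S$, the eigencharacter of $t^*_j=t_j$ is $\chi_{t_j}$, so $\eta_{\chi^*}=\eta_\chi$. Then
$$\chi^{**}=(\chi^*\eta_\chi^{-1})^q=\bigl((\chi')^q\eta_\chi^{-1}\bigr)^q=\chi'^{q^2}\eta_\chi^{-q}=\chi'\,\eta_\chi^{-q},$$
using that characters of $\bF_{q^2}^\times$ satisfy $\psi^{q^2}=\psi$. Since $\alpha_j^q=\alpha_j^{-1}$ (because $\alpha_j^{q+1}=\iota^{p^j(q^2-1)}=1$), we get $\eta_\chi^{-q}=\eta_\chi$. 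Therefore $\chi^{**}=\chi'\eta_\chi=\chi$. Bijectivity follows because $*$ is an involution; we already know $\chi^*\in W_D(\bar\rho)$, since $\chi'\in W_D(\bar\rho)$ by Corollary~\ref{key-lemma-inverse} and $W_D(\bar\rho)$ is stable under $\chi\mapsto\chi^q$.

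\textbf{Part (2).} We compute $\chi\chi^*=\chi\chi'^q$ on exponents. Write $\chi'=\chi_{w,d'}$, with $w'=w$ as noted before the lemma. By Lemma~\ref{chi^q}(2), $\chi'^q=\chi_{w^-,-d'}$, so the exponent of $\chi\chi^*$ is
$$\sum_j (q^{w_j}+q^{1-w_j})p^jr_j+(1-q)\sum_j p^j(d_j-d'_j).$$
The first sum equals $(1+q)\sum_j p^jr_j$, which is the exponent of $\eta_c\circ\Nm_{\bF_{q^2}/\bF_q}$. This uses $\iota^{q+1}=\kappa\circ\Nm$ and $\eta_c=\kappa^{\sum p^jr_j}$.

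For the second sum, the proof of Lemma~\ref{key-lemma} shows that $\chi'=\chi\prod_j\alpha_j^{i_j}$ with $i_j=d_j-d'_j$. On the other hand, $\chi'=\chi\eta_\chi^{-1}$, so $\prod_j\alpha_j^{i_j}=\eta_\chi^{-1}$. Since $\alpha_j=\iota^{p^j(q-1)}$, the character $\iota^{(1-q)\sum_j p^j(d_j-d'_j)}$ equals $\prod_j\alpha_j^{-i_j}=\eta_\chi$. This gives $\chi\chi^*=\eta_\chi\cdot(\eta_c\circ\Nm_{\bF_{q^2}/\bF_q})$.

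The only step requiring care is the sign bookkeeping between $i_j$, $d_j-d'_j$ and the eigencharacters $\chi_{t_j}$; no genericity input beyond what Lemma~\ref{key-lemma} already provides is needed. Concretely, $t_j=y_j$ means $\chi\alpha_j^{-1}\in W_D(\bar\rho)$ and $\chi_{t_j}=\alpha_j$, so $\chi'$ involves $\alpha_j^{-1}$ and $d'_j=d_j+1$, consistent with the proof of Lemma~\ref{key-lemma}.
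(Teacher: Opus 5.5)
Your proof is correct. Part (1) is essentially the paper's argument: you compute $\fa(\chi')$ via Corollary~\ref{key-lemma-inverse}, then $\fa(\chi^*)$ via Lemma~\ref{chi^q}(1), deduce $\eta_{\chi^*}=\eta_\chi$, and use $\eta_\chi^q=\eta_\chi^{-1}$. You also check explicitly that $\chi^*\in W_D(\bar\rho)$, which the paper leaves implicit.

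For part (2) you take a more computational route than the paper. You expand $\chi\chi'^q$ in the $(w,d)$-exponents and use $w'=w$ and Lemma~\ref{chi^q}(2). You then identify the $d$-part with $\eta_\chi$ via $i_j=d_j-d'_j$ from the proof of Lemma~\ref{key-lemma}.

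The paper instead reuses the identity $\eta_\chi^{-q}=\eta_\chi$ that you already proved in part (1). This gives $\chi^*=(\chi\eta_\chi^{-1})^q=\chi^q\eta_\chi$, hence $\chi\chi^*\eta_\chi^{-1}=\chi^{1+q}$. Finally $\chi^{1+q}=\iota^{(q+1)\sum_j p^jr_j}=\eta_c\circ\Nm_{\bF_{q^2}/\bF_q}$, because $(1+q)q^{w_j}\equiv 1+q$ and $(1+q)(1-q)\equiv 0$ modulo $q^2-1$. Equivalently, $\chi^{1+q}=\chi|_{\bF_q^\times}\circ\Nm_{\bF_{q^2}/\bF_q}$, and $\chi|_{\bF_q^\times}=\eta_c$ by Remark~\ref{cent-char}.

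The paper's route is shorter and makes part (2) independent of the Key Lemma and of the fact $w'=w$. Yours is valid, since you only need the Key Lemma with $m=1$, i.e.\ $2$-genericity. It is just redundant given what you had already established in part (1).
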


\begin{proof}
    (1) Write $\fa(\chi')=(t'_0,\ldots,t'_{f-1})$ and $\fa(\chi^*)=(t_0^*,\ldots,t_{f-1}^*)$. 
    For $j\in S=\{j:t_j\ne y_jz_j\}$, we have $t'_j=\frac{y_jz_j}{t_j}$ by Corollary~\ref{key-lemma-inverse}, 
    and $t_j^*=\frac{y_jz_j}{t'_j}$ by Lemma~\ref{chi^q}(1). 
    For $j\notin S$, we have $t'_j=t_j$ and hence $t_j^*=t_j$ by Lemma~\ref{chi^q}(1). So in all cases $t_j^*=t_j$. 

    If we write $\eta_{\chi}=\prod_{j\in S}\chi_{t_j}$, then 
    $\eta_{\chi^*}=\eta_\chi$ by the argument above. Note that $\eta_\chi^q=\eta_\chi^{-1}$ since $\alpha_j^q=\alpha_j^{-1}$ for each $j$. 
    So we have  
    $$\chi^{**}=(\chi^*\eta_{\chi^*}^{-1})^q=(\chi^q\eta_\chi\eta_\chi^{-1})^q=\chi^{q^2}=\chi.$$

    (2) Note that 
    $\chi\chi^*\eta_\chi^{-1}=\chi\chi^{q}=\iota^{\sum_{j=0}^{f-1}(q+1)p^jr_j}=\eta_c\circ \Nm_{\bF_{q^2}/\bF_q}$. 
\end{proof}

\begin{corollary}\label{ess-self-dual}
    We have an isomorphism 
    $$\Ext_{\gr \Lambda}^{2f}(N, \gr \Lambda)\cong N\otimes (\eta_c\circ \Nm_{\bF_{q^2}/\bF_q})$$
    as $\gr \Lambda$-modules with compatible $H$-action. 
\end{corollary}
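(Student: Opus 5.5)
Since $N$ is a direct sum of copies of $\chi^{-1}\otimes R/\fa(\chi)$ and $\Ext$ commutes with finite direct sums, I will compute $\Ext^{2f}_{\gr \Lambda}(\chi^{-1}\otimes R/\fa(\chi),\gr \Lambda)$ for each $\chi\in W_D(\bar{\rho})$ separately. I will show it is isomorphic to $(\chi^*)^{-1}\otimes R/\fa(\chi)\otimes(\eta_c\circ\Nm_{\bF_{q^2}/\bF_q})$ as a graded $\gr\Lambda$-module with compatible $H$-action, up to a uniform degree shift. By the preceding lemma, $\chi\mapsto\chi^*$ is a bijection on $W_D(\bar{\rho})$ with $\fa(\chi^*)=\fa(\chi)$. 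Summing over $\chi$ then reassembles $N\otimes(\eta_c\circ \Nm)$.

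For a single $\chi$, write $\gr \Lambda/\fb(\chi)\cong\bigotimes_j(\gr\Lambda)_j/(t_j,h_j)$. I will use the tensor-product resolution from Lemma~\ref{resolution-of-N}. Each factor $(\gr\Lambda)_j/(t_j,h_j)$ has a Koszul-type resolution of length $2$, because $h_j$ is central and regular and $t_j$ is regular on $(\gr\Lambda)_j/(h_j)=\bF[y_j,z_j]$. Hence $\Ext^i$ vanishes for $i\neq 2$, and
$$\Ext^2_{(\gr\Lambda)_j}\bigl((\gr\Lambda)_j/(t_j,h_j),(\gr\Lambda)_j\bigr)\cong (\gr\Lambda)_j/(t_j,h_j)\otimes \chi_{t_j}\chi_{h_j},$$
where $\chi_{t_j}$ and $\chi_{h_j}$ are the $H$-eigencharacters. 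The twist arises because the top term of the Koszul complex is generated in the degree of $t_jh_j$, and the dualized complex carries the inverse of that eigencharacter on the source side. Since $h_j=[y_j,z_j]$, its eigencharacter is $\alpha_j\alpha_j^{-1}=1$. Taking the tensor product over $j$ (Künneth for the $\bF$-tensor decomposition of $\gr\Lambda$) gives
$$\Ext^{2f}_{\gr\Lambda}(\gr\Lambda/\fb(\chi),\gr\Lambda)\cong \gr\Lambda/\fb(\chi)\otimes\Bigl(\prod_j\chi_{t_j}\Bigr)^{\pm1}.$$
One subtlety is that this is a right module. I will turn it into a left module using the anti-involution of $\gr\Lambda$ that fixes the $y_j,z_j$ and negates the $h_j$, as in \cite{BHH$^+$25c}. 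This is harmless because the quotient is commutative.

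Next I account for the characters. Tensoring with $\chi^{-1}$ and dualizing produces $\chi$. For $j\notin S$ we have $t_j=y_jz_j$, whose eigencharacter is trivial. Therefore $\prod_j\chi_{t_j}=\eta_\chi$, and the generator carries the character $(\chi\eta_\chi^{\mp1})^{-1}$ up to sign conventions. By part (2) of the preceding lemma, $\chi\chi^*=\eta_\chi\,(\eta_c\circ\Nm)$, so $(\chi^*)^{-1}\otimes(\eta_c\circ\Nm)$ has character $\chi\eta_\chi^{-1}$. It then remains to check that the sign in the Koszul computation gives $\eta_\chi^{-1}$, which pins down the convention.

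The main obstacle is getting this sign and the left/right conventions exactly right. I will verify it in the case $f=1$, $t_0=y_0$ by computing the dual of the Koszul complex explicitly. The remaining ingredients, namely the Koszul/Künneth computation (as in \cite[Cor.~2.3.4]{BHH$^+$25c}) and the bijectivity of $\chi\mapsto\chi^*$, are straightforward.
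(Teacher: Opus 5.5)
Your argument is correct, but you do the homological step differently from the paper.

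\textbf{The paper's route.} The paper does not dualize a resolution at this point. It uses the general d\'evissage
\[
\Ext^{2f}_{\gr \Lambda}(N,\gr \Lambda)\cong\Ext^{f}_{R}(N,R)\cong\Hom_{\overline{R}}(N,\overline{R}).
\]
This holds for any $\overline{R}$-module, because the $h_j$ are central and regular in $\gr \Lambda$ and the $y_jz_j$ form a regular sequence in $R$. It then computes $\Hom_{\overline{R}}(R/\fa(\chi),\overline{R})$ as the annihilator of $\fa(\chi)$ in $\overline{R}$. That annihilator is generated by $\prod_{j\in S}y_jz_j/t_j$, so it is $\eta_\chi^{-1}\otimes R/\fa(\chi)$.

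\textbf{Your route.} You dualize the tensor product of the length-two Koszul resolutions of $(\gr \Lambda)_j/(t_j,h_j)$ and read the character off the top term. This is self-contained, and it gives at the same time the vanishing of $\Ext^i$ for $i\neq 2f$ and the degrees of the generators. The paper's d\'evissage works for arbitrary $\overline{R}$-modules and moves the computation to the commutative ring $\overline{R}$, which is the setting reused later in the purity and cycle argument.

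From the formula $\chi\eta_\chi^{-1}\otimes R/\fa(\chi)$ onward, the two proofs coincide. Both use $\fa(\chi^*)=\fa(\chi)$, the identity $\chi\chi^*=\eta_\chi\cdot(\eta_c\circ\Nm_{\bF_{q^2}/\bF_q})$, and the bijectivity of $\chi\mapsto\chi^*$.

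\textbf{Two details to fix.}

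First, the sign you leave open is forced, and your first display has it backwards. On $\Hom_{\gr \Lambda}(P_2,\gr \Lambda)$ use the action $(h\phi)(x)=h(\phi(h^{-1}x))$. The top Koszul generator has character $\chi_{t_j}\chi_{h_j}=\chi_{t_j}$, so its dual has character $\chi_{t_j}^{-1}$. The product over $j$ is exactly $\eta_\chi^{-1}$, as required.

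Second, the degree shift is not uniform. For the $\chi$-summand, the top Koszul generator lies in degree
\[
\sum_j(\deg t_j+\deg h_j)=-(4f-|S|),\qquad |S|=\#\{j:t_j\neq y_jz_j\}.
\]
In general $|S|$ depends on $\chi$; this is why $\bF\otimes_{\gr \Lambda}\Ext^{2f}_{\gr \Lambda}(N,\gr \Lambda)$ is spread over degrees $[3f,4f]$. Since $\fa(\chi^*)=\fa(\chi)$, the shifts of the $\chi$- and $\chi^*$-summands agree. So you get a graded isomorphism only after shifting each summand separately. The ungraded statement of the corollary is unaffected.
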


\begin{proof}
    Recall that $\overline{R}=R/(y_0z_0,\ldots,y_{f-1}z_{f-1})$. 
    Since $N$ is an $\overline{R}$-module, 
    by a standard d\'evissage argument as in \cite[Lemma~3.3.1.9]{BHH$^+$25a}, 
    we have 
    $$\Ext_{\gr \Lambda}^{2f}(N, \gr \Lambda)=\Ext_{R}^{f}(N, R)=\Hom_{\overline{R}}(N,\overline{R}).$$
    By the same argument as in \cite[3.3.1.10]{BHH$^+$25a}, we have 
    $$\Hom_{\overline{R}}(R/\fa(\chi),\overline{R})\cong \eta_{\chi}^{-1}\otimes R/\fa(\chi).$$
    So 
    $$\begin{aligned}
        \Hom_{\overline{R}}(N,\overline{R})&=(\bigoplus_{\chi\in W_D(\bar{\rho})}(\chi\eta_{\chi}^{-1}\otimes R/\fa(\chi)))^{\oplus r}\\
        &=(\bigoplus_{\chi\in W_D(\bar{\rho})}(\eta_c\circ \Nm_{\bF_{q^2}/\bF_q})(\chi^*)^{-1}\otimes R/\fa(\chi^*))^{\oplus r}\\
        &=N\otimes (\eta_c\circ \Nm_{\bF_{q^2}/\bF_q}),
    \end{aligned}
    $$
    as desired. 
\end{proof}

\subsection{The representation \texorpdfstring{$\tau$}{tau} and a \texorpdfstring{$\Tor$}{Tor}-injective property}

We first extend some results of \cite[\S\S~2.1, 2.2]{HW24b} concerning uniserial $\cO_D^\times$-representations. 
Let $\chi: \cO_D^\times\to \bF^\times$ be a smooth character. We study the induced representation $\Indu{U_D^2H}{\cO_D^\times}{\chi}$. 
Note that the quotient $U_D^2H\backslash \cO_D^\times$ has a set of representatives 
$$\{1+\varpi_{D}[\lambda]:\lambda\in \bF_{q^2}\}.$$
Thus, by fixing a basis vector $v$ of $\chi$, we obtain a basis of $\Indu{U_D^2H}{\cO_D^\times}{\chi}$ as follows: 
$$\{[1+\varpi_D[\lambda],v]:\lambda\in\bF_{q^2}\}.$$
For $0\leq k\leq q^2-1$, define 
$$f_{k,v}=\sum_{\lambda\in \bF_{q^2}}\lambda^k[1+\varpi_D[\lambda],v].$$
These vectors also form a basis of $\Indu{U_D^2H}{\cO_D^\times}{\chi}$. Moreover, 
$$[\mu](1+\varpi_D[\lambda])=(1+\varpi_D[\mu^{q-1}\lambda])[\mu].$$
So 
$$[\mu]f_{k,v}=\sum_{\lambda\in \bF_{q^2}}\lambda^k[1+\varpi_D[\mu^{q-1}\lambda],\chi(\mu)v]=\chi(\mu)\mu^{(1-q)k}f_{k,v},$$
and hence $f_{k,v}$ is an $H$-eigenvector of character $\chi\iota^{(1-q)k}$. 

For $0\leq k\leq q^2-1$, write $k=\sum_{j=0}^{2f-1}p^jk_j$. 
We say that $k\preccurlyeq k'$ if $k_j\leq k'_j$ for each $0\leq j\leq 2f-1$, and 
$k\prec k'$ if $k\preccurlyeq k'$ but $k\ne k'$.  

\begin{lemma}
    The subrepresentation of $\Indu{U_D^2H}{\cO_D^\times}{\chi}$ generated by $f_{k,v}$ is spanned by 
    $\{f_{k',v}:k'\preccurlyeq k\}$. Moreover, the $\cO_D^\times$-socle of $\Indu{U_D^2H}{\cO_D^\times}{\chi}$ is $\bF f_{0,v}$. 
\end{lemma}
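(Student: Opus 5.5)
We will compute the action of $U_D^1$ on the basis $f_{k,v}$ explicitly. Since $U_D^2$ is normal in $\cO_D^\times$ and $U_D^1/U_D^2\cong \bF_{q^2}$ (additively, via $1+\varpi_D[\lambda]$), and $U_D^2H$ acts on $\chi$ with $U_D^2$ acting trivially, the action of $g_\mu:=1+\varpi_D[\mu]$ on $[1+\varpi_D[\lambda],v]$ is, modulo $U_D^2$, translation: $g_\mu(1+\varpi_D[\lambda])\equiv 1+\varpi_D[\lambda+\mu] \pmod{U_D^2}$. The element $g_\mu(1+\varpi_D[\lambda])(1+\varpi_D[\lambda+\mu])^{-1}$ lies in $U_D^2$, which acts trivially on $v$. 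The first step is therefore the formula
$$g_\mu f_{k,v}=\sum_{\lambda}\lambda^k[1+\varpi_D[\lambda+\mu],v]=\sum_{\lambda}(\lambda-\mu)^k[1+\varpi_D[\lambda],v]=\sum_{k'\le k}\binom{k}{k'}(-\mu)^{k-k'}f_{k',v}.$$
Here we take $0^0=1$, with care at $k=0$ versus $k=q^2-1$; since $\lambda^{q^2-1}$ and $\lambda^0$ differ only at $\lambda=0$, we keep $f_{q^2-1,v}$ as a genuine separate basis vector. By Lucas' theorem, $\binom{k}{k'}\not\equiv 0 \pmod p$ if and only if $k'\preccurlyeq k$. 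So the span $V_k$ of $\{f_{k',v}:k'\preccurlyeq k\}$ is stable under $U_D^1$, and it is stable under $H$ since the $f_{k',v}$ are $H$-eigenvectors. Hence $V_k$ is a subrepresentation containing $f_{k,v}$.

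For the converse inclusion, let $W$ be the subrepresentation generated by $f_{k,v}$. Then $W$ contains $(g_\mu-1)f_{k,v}=\sum_{k'\prec k}\binom{k}{k'}(-\mu)^{k-k'}f_{k',v}$ for all $\mu\in\bF_{q^2}$. To separate the terms, we note that the exponents $k-k'$ for distinct $k'\prec k$ are distinct integers in $[1,q^2-1]$, so the functions $\mu\mapsto\mu^{k-k'}$ on $\bF_{q^2}$ are linearly independent: distinct exponents in $[1,q^2-1]$ give distinct characters of $\bF_{q^2}^\times$, and they all vanish at $0$. A Vandermonde/character-orthogonality argument then shows that the span of the vectors $(g_\mu-1)f_{k,v}$ contains each $\binom{k}{k'}f_{k',v}$, and hence each $f_{k',v}$ with $k'\prec k$. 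Alternatively, one can use $H$-eigenspace projection, which is available since $|H|$ is prime to $p$; but $f_{k',v}$ for different $k'$ may share an eigencharacter (e.g.\ $k'=0$ and $q^2-1$), so we prefer the independence-of-monomials argument. This gives $W=V_k$.

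For the socle statement, $f_{0,v}=\sum_\lambda[1+\varpi_D[\lambda],v]$ is fixed by $U_D^1$ by the formula above, and $H$ acts on it by $\chi$, so $\bF f_{0,v}$ is a subrepresentation. Conversely, any nonzero subrepresentation $W'$ contains some nonzero vector $w=\sum_k c_kf_{k,v}$. Choose $k$ maximal for $\preccurlyeq$ with $c_k\neq0$; more robustly, apply $(g_\mu-1)$ repeatedly as in the previous step to lower the support. Since every nonempty set of indices with the order $\preccurlyeq$ eventually reduces to $0$, we obtain $f_{0,v}\in W'$. Concretely, the socle equals the $U_D^1$-invariants, since $U_D^1$ is pro-$p$ and normal; and $w$ is $U_D^1$-invariant if and only if $\sum_k c_k\sum_{k'\prec k}\binom{k}{k'}(-\mu)^{k-k'}f_{k',v}=0$ for all $\mu$. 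The linear independence of the monomials in $\mu$ then forces $c_k\binom{k}{k'}=0$ for all $k'\prec k$. Taking $k'=0$, which is $\prec k$ for every $k\ne0$ with binomial coefficient $1$, gives $c_k=0$ for $k\neq0$. So the socle is $\bF f_{0,v}$.

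The main obstacle is the monomial-independence step in $\mu$. Different pairs $(k,k')$ can give the same exponent $k-k'$, so in the socle computation we must group terms by exponent. We would first isolate the exponent $e=k$, which arises only from $(k,0)$, coming from the coefficient of $f_{0,v}$. The functions $\mu\mapsto\mu^{e}$ for $1\le e\le q^2-1$ are linearly independent on $\bF_{q^2}$, so each such coefficient $c_k$ must vanish.
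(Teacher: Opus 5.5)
Your proof is correct and follows essentially the same route as the paper. Both arguments rest on the Lucas-theorem expansion of $(1+\varpi_D[\mu])f_{k,v}$ and then separate the monomials in $\mu$; the paper does this explicitly with the character sums $\sum_{\mu}\mu^{q^2-1-e}(1+\varpi_D[\mu])$, which are an explicit inverse of your Vandermonde step, and both identify the socle with the $U_D^1$-invariants. For the socle, your ``concretely'' paragraph, which reads off the $f_{0,v}$-coefficient of $(g_\mu-1)w$, is the rigorous version, so the vaguer ``apply $(g_\mu-1)$ repeatedly'' sketch before it can be dropped.
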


\begin{proof}
    The method is the same as that in \cite[Prop.~2.7]{HW24b}. 
    Since $f_{k,v}$ is an $H$-eigenvector and $U^2_D$ acts trivially, it suffices to consider the action of $U_D^1/U_D^2$. 
    We have 
    $$(1+\varpi_D[\mu])f_{k,v}=\sum_{\lambda\in\bF_{q^2}}\lambda^k[(1+\varpi_D[\mu+\lambda]),v]=\sum_{\lambda\in\bF_{q^2}}(\lambda-\mu)^k[(1+\varpi_D[\lambda]),v].$$
    Using the formula
    $$(\lambda-\mu)^k=\sum_{k'\preccurlyeq k}\prod_{j=0}^{2f-1}\binom{k_j}{k'_j}(-\mu)^{k-k'}\lambda^{k'}$$
    in $\bF_{q^2}$, we get 
    $$(1+\varpi_D[\mu])f_{k,v}=\sum_{k'\preccurlyeq k}\prod_{j=0}^{2f-1}\binom{k_j}{k'_j}(-\mu)^{k-k'} f_{k',v}.$$
    So we conclude that $\langle\cO_D^\times\cdot f_{k,v}\rangle\subset \bigoplus_{k'\preccurlyeq k}\bF f_{k',v}$. 
    On the other hand, for fixed $k'\prec k$, we have 
    $$\sum_{\mu\in {\bF_{q^2}}}\mu^{q^2-1-(k-k')}(1+\varpi_D[\mu])f_{k,v}=(-1)^{k-k'+1}\prod_{j=0}^{2f-1}\binom{k_j}{k'_j}f_{k',v}.$$
    We conclude that $f_{k',v}\in \langle\cO_D^\times\cdot f_{k,v}\rangle$. 

    For the last assertion, let $f\in (\Indu{U_D^2H}{\cO_D^\times}{\chi})^{U_D^1}$ be a nonzero element. 
    Write $f=\sum_{k=0}^{q^2-1} a_kf_{k,v}$ and let $l$ be a maximal index with respect to $\preccurlyeq$ such that $a_l\ne 0$. 
    We want to show that $l=0$. Note that for each $k\preccurlyeq l$, 
    we have 
    $$\sum_{\mu\in {\bF_{q^2}}}\mu^{q^2-1-l}(1+\varpi_D[\mu])f_{k,v}=\begin{cases}
        (-1)^{l+1}f_{0,v}, & l=k.\\
        0, & l\ne k.
    \end{cases} $$
    So $\sum_{\mu\in {\bF_{q^2}}}\mu^{q^2-1-l}(1+\varpi_D[\mu]) f\ne 0$. 
    But if $l\ne 0$, then $\sum_{\mu\in {\bF_{q^2}}}\mu^{q^2-1-l}(1+\varpi_D[\mu])\in\fm$, 
    and hence kills $f$, which is a contradiction. 
\end{proof}

\begin{proposition}\label{ext1-of-character}
    Let $\chi,\psi:\cO_D^\times\to \bF^\times$ be two smooth characters. 
    Then $\Ext^1_{\cO_D^\times/Z_D^1}(\psi,\chi)\ne 0$ if and only if $\psi=\chi\alpha_j$ or $\psi=\chi\alpha_j^{-1}$, 
    and in either case $\dim_{\bF}\Ext^1_{\cO_D^\times/Z_D^1}(\psi,\chi)=1$. 
\end{proposition}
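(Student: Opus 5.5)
Since $\cO_D^\times/U_D^1\cong\bF_{q^2}^\times$ has order prime to $p$, the Hochschild--Serre spectral sequence for $U_D^1/Z_D^1\triangleleft \cO_D^\times/Z_D^1$ degenerates. This gives
$$\Ext^1_{\cO_D^\times/Z_D^1}(\psi,\chi)\cong \Hom_H\bigl(\psi,\ H^1(U_D^1/Z_D^1,\bF)\otimes\chi\bigr)=\bigl(H^1(U_D^1/Z_D^1,\bF)\otimes\chi\psi^{-1}\bigr)^H.$$
Moreover $H^1(U_D^1/Z_D^1,\bF)=\Hom(U_D^1/Z_D^1,\bF)$, which is dual to the Frattini quotient of $U_D^1/Z_D^1$. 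The plan is therefore to compute this Frattini quotient as an $H$-module and check that its characters are exactly $\alpha_j^{\pm1}$ for $0\le j\le f-1$, each with multiplicity one. Equivalently, the $H$-module is $\fm/\fm^2=\gr^{-1}\Lambda$, spanned by the $y_j,z_j$. By the earlier proposition, these are $H$-eigenvectors with eigencharacters $\alpha_j,\alpha_j^{-1}$, and there are $2f$ of them.

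First I will confirm directly that $\fm/\fm^2$ has dimension $2f$ with the stated characters. The Frattini quotient of $U_D^1/Z_D^1$ is $U_D^1/\bigl(Z_D^1 U_D^2\bigr)$, using that $[U_D^1,U_D^1]\subset U_D^2$ and that $p$-th powers lie in $U_D^2$. Now $U_D^1/U_D^2\cong\bF_{q^2}$ via $1+\varpi_D[\lambda]\mapsto\lambda$, and $Z_D^1=1+p\cO_K\subset U_D^2$. So the Frattini quotient is $\bF_{q^2}$ viewed as an $\bF_p$-space of dimension $2f$. The conjugation formula $[\mu](1+\varpi_D[\lambda])[\mu]^{-1}=1+\varpi_D[\mu^{q-1}\lambda]$, already used above, shows that $H$ acts by multiplication by $\mu^{q-1}$. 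After tensoring with $\bF$, the space $\bF_{q^2}\otimes_{\bF_p}\bF$ decomposes as $\bigoplus_{j=0}^{2f-1}\bF$, with $\mu$ acting on the $j$-th factor by $\iota(\mu)^{p^j(q-1)}=\alpha_j(\mu)$. The dual $\Hom(\bF_{q^2},\bF)$ therefore carries the characters $\alpha_j^{-1}$, $0\le j\le 2f-1$, which is the same set $\{\alpha_j^{\pm1}:0\le j\le f-1\}$ because $\alpha_{j+f}=\alpha_j^{-1}$.

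Next, $\Ext^1(\psi,\chi)$ is the $\chi^{-1}\psi$-isotypic part of $H^1$ (up to the sign convention), so it is nonzero exactly when $\psi\chi^{-1}=\alpha_j^{\pm1}$ for some $j$. The dimension equals the multiplicity of that character among $\alpha_0,\dots,\alpha_{2f-1}$. The main point needing care is to check that these $2f$ characters are pairwise distinct, which gives multiplicity one. The exponents $p^j(q-1)$ for $0\le j\le 2f-1$ are distinct modulo $q^2-1$, since $p^j$ are distinct modulo $q+1$ for $0\le j<2f$ (as $p^j\le p^{2f-1}$, and one checks $p^j\not\equiv p^{j'}$ directly: for $j,j'<f$ both are less than $q+1$, and $p^{j+f}\equiv -p^j$). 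The remaining requirement is $-p^j\not\equiv p^{j'}\pmod{q+1}$, i.e.\ $q+1\nmid p^j+p^{j'}$ for $j,j'<f$. This holds because $p^j+p^{j'}\le 2p^{f-1}<q+1$ when $p>2$, and the paper implicitly has $p\ge 5$ from genericity.

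Alternatively, one could use the induced representation $\Indu{U_D^2H}{\cO_D^\times}{\chi}$ and the preceding lemma. The subrepresentation generated by $f_{p^j,v}$ is the span of $f_{0,v}$ and $f_{p^j,v}$, which gives nonsplit extensions of $\chi\iota^{(1-q)p^j}=\chi\alpha_j^{-1}$ by $\chi$. This realizes the nonvanishing concretely, but the cohomological count above is cleaner for the dimension statement, so I would use it as the main argument.
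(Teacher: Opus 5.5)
Your argument is correct, but it does not follow the paper: the paper gives no computation and simply deduces the statement from Corollary~2.11 of [HW24a]. You instead make the computation explicit:
\begin{itemize}
\item Since the quotient $\cO_D^\times/U_D^1\cong H$ has order prime to $p$, Hochschild--Serre collapses. So $\Ext^1_{\cO_D^\times/Z_D^1}(\psi,\chi)$ is the $\psi\chi^{-1}$-isotypic part of $H^1(U_D^1/Z_D^1,\bF)\cong(\fm/\fm^2)^\vee$.
\item You read off the $H$-weights of this module from the presentation of $\gr\Lambda$: the $2f$ classes $y_j,z_j$ in $\gr^{-1}\Lambda$, with characters $\alpha_j^{\pm1}$.
\item Multiplicity one then reduces to the pairwise distinctness of $\alpha_0,\dots,\alpha_{2f-1}$, i.e.\ $p^j\not\equiv p^{j'}\pmod{q+1}$, which you verify correctly.
\end{itemize}
Your route is more transparent: it shows the only input is the $H$-module $\fm/\fm^2$, and that no genericity is needed. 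Your bound $p^j+p^{j'}\le 2p^{f-1}<q+1$ in fact holds for every $p$, not just $p>2$. The citation buys brevity.

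One small point concerns your ``direct confirmation''. The facts $[U_D^1,U_D^1]\subset U_D^2$ and $(U_D^1)^p\subset U_D^2$ only give $\Phi(U_D^1/Z_D^1)\subseteq Z_D^1U_D^2/Z_D^1$. Hence they give a surjection from the Frattini quotient onto $U_D^1/U_D^2\cong\bF_{q^2}$. This shows that the characters $\alpha_j^{\pm1}$ occur in $H^1$, but not that no other characters occur. The reverse inclusion comes from $\dim_{\bF}\fm/\fm^2=2f$, i.e.\ from the $\gr\Lambda$ proposition, so you should say explicitly that you are combining the two inputs.

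A truly self-contained check would have to show that commutators, $p$-th powers and $Z_D^1$ fill out each $U_D^n/U_D^{n+1}$ for $n\ge 2$. For example, at $n=2$ the commutators contribute the elements $\nu\in\bF_{q^2}$ with $\nu^q=-\nu$, and $Z_D^1$ contributes $\bF_q$. These together span $\bF_{q^2}$ only because $p$ is odd.
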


\begin{proof}
    This is a corollary of \cite[Cor.~2.11]{HW24a}.
\end{proof}

Let $E_j^-(\chi)$ be the unique non-split extension 
\begin{equation}
    0\to\chi\to E_j^-(\chi)\to\chi\alpha_j^{-1}\to 0,
\end{equation}
and similarly let $E^+_j(\chi)$ be the unique non-split extension 
\begin{equation}
    0\to\chi\to E_j^+(\chi)\to\chi\alpha_j\to 0.
\end{equation}

\begin{lemma}
    Suppose that $0\leq s\leq p-1$. There exists a unique smooth representation of $\cO_D^\times/Z_D^1$ over $\bF$, 
    denoted by $E_j^-(\chi,s)$, which is uniserial of dimension $s+1$, satisfies $\dim_{\bF} E^-_j(\chi,s)[\fm^i]=i$ for $0\leq i\leq s+1$, and has graded pieces $\chi,\chi\alpha_j^{-1},\ldots,\chi\alpha_j^{-s}$. 
\end{lemma}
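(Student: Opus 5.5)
Existence comes from a subquotient of $\Indu{U_D^2H}{\cO_D^\times}{\chi}$; uniqueness comes from Proposition~\ref{ext1-of-character} together with an induction on $s$.

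\textbf{Existence.} Write $k^{(s)}:=s\,p^{j+f}$, with indices of $p$ read modulo $2f$. Since $\alpha_{j+f}=\alpha_j^{-1}$, the vector $f_{k,v}$ with $k=i\,p^{j+f}$ has $H$-eigencharacter
$$\chi\,\iota^{(1-q)ip^{j+f}}=\chi\,\iota^{-ip^{j}(q-1)q}=\chi\alpha_j^{-iq}=\chi\alpha_j^{i}.$$
To get the graded pieces $\chi\alpha_j^{-i}$ instead, take $k=ip^{j}$ (or swap $j\leftrightarrow j+f$, which only changes a sign convention); I will fix the index $m\in\{j,j+f\}$ for which $f_{ip^m,v}$ has character $\chi\alpha_j^{-i}$.

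Let $V$ be the subrepresentation generated by $f_{sp^m,v}$. Because $s\le p-1$, the digits of $sp^m$ are concentrated in position $m$. By the lemma just proved, $V$ is spanned by the vectors $f_{ip^m,v}$ with $0\le i\le s$. Moreover, the subrepresentation generated by $f_{ip^m,v}$ is exactly $\mathrm{span}\{f_{i'p^m,v}:i'\le i\}$. Hence the subrepresentations of $V$ are totally ordered, and $V$ is uniserial of dimension $s+1$ with socle $\bF f_{0,v}$.

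The socle filtration has $i$-th layer spanned by $f_{ip^m,v}$, which gives $\dim V[\fm^i]=i$. One checks this directly: the vector $(1+\varpi_D[\mu])-1$ sends $f_{ip^m,v}$ into $\mathrm{span}\{f_{i'p^m,v}:i'<i\}$, with a nonzero coefficient $\binom{i}{i-1}$ on $f_{(i-1)p^m,v}$ for suitable $\mu$, since $i\le p-1$. The graded pieces are the characters $\chi\alpha_j^{-i}$ for $0\le i\le s$, as required.

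Finally, $Z_D^1\subset U_D^2H$ only if we first check that $Z_D^1$ acts trivially. This holds because $Z_D^1=1+p\cO_K\subset U_D^2$, and $U_D^2$ acts trivially on the induction, which is inflated from $U_D^2H$ with $\chi$ trivial on $U_D^1$. So $V$ is a representation of $\cO_D^\times/Z_D^1$. Set $E_j^-(\chi,s)=V$.

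\textbf{Uniqueness.} I will induct on $s$; the cases $s=0$ and $s=1$ are trivial, the latter by Proposition~\ref{ext1-of-character}, since $\dim\Ext^1=1$.

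Let $E$ satisfy the hypotheses for $s$. Then $E[\fm^s]$ is uniserial of dimension $s$ with pieces $\chi,\dots,\chi\alpha_j^{-(s-1)}$, so by induction $E[\fm^s]\cong E_j^-(\chi,s-1)$. Thus $E$ is an extension of $\chi\alpha_j^{-s}$ by $E_j^-(\chi,s-1)$, and it is nonsplit on the top layer because $E$ is uniserial.

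The main obstacle is to show that $\Ext^1(\chi\alpha_j^{-s},E_j^-(\chi,s-1))$ has dimension at most one, and that its nonzero classes give isomorphic representations, so that they differ only by scaling. I would argue via the long exact sequence from $0\to E_j^-(\chi,s-2)\to E_j^-(\chi,s-1)\to\chi\alpha_j^{-(s-1)}\to0$:
$$\Ext^1(\chi\alpha_j^{-s},E_j^-(\chi,s-2))\to\Ext^1(\chi\alpha_j^{-s},E_j^-(\chi,s-1))\to\Ext^1(\chi\alpha_j^{-s},\chi\alpha_j^{-(s-1)}).$$
The right-hand term is one-dimensional by Proposition~\ref{ext1-of-character}. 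For the left-hand term, $E_j^-(\chi,s-2)$ has pieces $\chi\alpha_j^{-i}$ with $i\le s-2$. By dévissage and Proposition~\ref{ext1-of-character}, the group $\Ext^1(\chi\alpha_j^{-s},\chi\alpha_j^{-i})$ vanishes unless $\alpha_j^{s-i}=\alpha_j^{\pm1}$. Since $\alpha_j$ has order $\frac{q^2-1}{p^j\text{-stuff}}$, which is large compared with $p$, and $s-i\ge 2$, this does not happen.

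Hence the left-hand term vanishes, and the middle term injects into a one-dimensional space. Any two nonsplit classes are therefore proportional, and scaling gives isomorphic extensions. This proves uniqueness.
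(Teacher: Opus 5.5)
Your proof is correct and follows the paper's own route. Existence comes from the subrepresentation of $\Indu{U_D^2H}{\cO_D^\times}{\chi}$ generated by $f_{sp^j,v}$ (so your index is simply $m=j$). Uniqueness comes from the long exact sequence attached to $0\to E_j^-(\chi,s-2)\to E_j^-(\chi,s-1)\to\chi\alpha_j^{-(s-1)}\to 0$ together with Proposition~\ref{ext1-of-character}, and your write-up fills in steps the paper leaves implicit: uniseriality, triviality of $Z_D^1$, and passing from $\dim\Ext^1\le 1$ to an isomorphism.

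One small fix is needed. Proposition~\ref{ext1-of-character} allows $\psi=\chi\alpha_k^{\pm1}$ for any $k$, not only $k=j$, so to kill the left-hand term you must check that $(q+1)\nmid (s-i)p^j\pm p^k$ for every $k$. This holds because that integer is nonzero, since $2\le s-i\le p-1$ is not a power of $p$, and its absolute value is at most $q$. Also, the order of $\alpha_j$ is exactly $q+1$, not your vaguer expression.
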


\begin{proof}
    Existence follows by taking the subrepresentation of $\Indu{U_D^2H}{\cO_D^\times}{\chi}$ generated by $f_{p^js,v}$. 
    For the uniqueness, it suffices to show that $\dim_{\bF}\Ext^1_{\cO_D^\times/Z_D^1}(\chi\alpha_j^{-(i+1)},E_j^-(\chi,i))\leq 1$. 
    Using the exact sequence 
    $$0\to E^-_j(\chi,i-1)\to E^-_j(\chi,i)\to \chi\alpha_j^{-i}\to 0,$$
    we have 
    $$\Ext^1(\chi\alpha_j^{-(i+1)},E^-_j(\chi,i-1))\to \Ext^{1}(\chi\alpha_j^{-(i+1)},E^-_j(\chi,i))\to\Ext^1(\chi\alpha_j^{-(i+1)},\chi\alpha_j^{-i}).$$
    By Proposition~\ref{ext1-of-character}, the first term is zero and the third term has dimension $1$, so the result follows. 
\end{proof}

\begin{remark}
    \begin{sloppypar}
    In \cite[Lemma~2.3]{HW24b}, the case $f=1$ is treated by using the fact that 
    $\Ext^2_{\cO_D^\times/Z_D^1}(\psi,\chi)\ne 0$ if and only if $\psi=\chi\alpha_j$ or $\psi=\chi\alpha_j^{-1}$, 
    which may not be true for general $f$. 
    \end{sloppypar}
\end{remark}

Similarly, there is a uniserial representation $E_j^+(\chi,s)$ with graded pieces $\chi,\chi\alpha_j,\ldots,\chi\alpha_j^{s}$. 
Once we have constructed these uniserial representations, we can construct the representation $\tau$ analogous to the one in 
\cite[\S~2.4]{BHH$^+$25c}.

\begin{lemma}
    Suppose that $1\leq n\leq p$. There exists a finite-dimensional smooth representation $\tau^{(n)}$ of $\cO_D^{\times}$ over $\bF$ such that 
    $$\gr_\fm((\tau^{(n)})^\vee)\cong N/I^{(n)}N$$
    as graded $\gr \Lambda$-modules with compatible $H$-action. 
    More precisely, $\tau^{(n)}\cong (\bigoplus_{\chi\in W_D(\bar{\rho})}\tau_{\chi}^{(n)})^{\oplus r}$ with 
    $$\gr_\fm((\tau_\chi^{(n)})^\vee)\cong \chi^{-1}\otimes R/(I^{(n)}+\fa(\chi))$$
    as graded  $\gr \Lambda$-modules with compatible $H$-action. 
    In particular, $\tau_{\chi}^{(n)}[\fm]\cong \chi$. 
\end{lemma}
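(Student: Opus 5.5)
The plan is to build $\tau_\chi^{(n)}$ as a subrepresentation of $\Indu{U_D^2H}{\cO_D^\times}{\chi}$, generated by a single explicit vector $f_{k,v}$, and then compute its graded dual using the preceding lemma. Write $\fa(\chi)=(t_0,\ldots,t_{f-1})$. For each $j$ we choose digits $k_j$ (position $j$) and $k_{j+f}$ (position $j+f$) as follows. If $t_j=y_j$, take $k_j=0$ and $k_{j+f}=n-1$. If $t_j=z_j$, take $k_j=n-1$ and $k_{j+f}=0$. If $t_j=y_jz_j$, take $k_j=k_{j+f}=n-1$. Set $k=\sum_j p^jk_j+\sum_j p^{j+f}k_{j+f}$, and define $\tau_\chi^{(n)}$ to be the subrepresentation generated by $f_{k,v}$. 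This is possible because $n-1\leq p-1$. We then take $\tau^{(n)}=(\bigoplus_\chi\tau_\chi^{(n)})^{\oplus r}$. The direct-sum statement reduces immediately to the single-$\chi$ statement, since $N/I^{(n)}N=(\bigoplus_\chi \chi^{-1}\otimes R/(I^{(n)}+\fa(\chi)))^{\oplus r}$.

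By the preceding lemma, $\tau_\chi^{(n)}$ has basis $\{f_{k',v}:k'\preccurlyeq k\}$. Its socle is $\bF f_{0,v}\cong\chi$, since it sits inside the induced representation whose socle is $\bF f_{0,v}$. The vector $f_{k',v}$ has $H$-eigencharacter $\chi\iota^{(1-q)k'}$. Because $\iota^{(1-q)p^j}=\alpha_j^{-1}$ and $\iota^{(1-q)p^{j+f}}=\alpha_{j+f}^{-1}=\alpha_j$, the eigencharacters are $\chi\prod_j\alpha_j^{k'_{j+f}-k'_j}$. After dualizing, these match exactly the monomials $\prod_j y_j^{a_j}z_j^{b_j}$ that survive in $\chi^{-1}\otimes R/(I^{(n)}+\fa(\chi))$, with $a_j\leftrightarrow k'_j$ and $b_j\leftrightarrow k'_{j+f}$ (or the reverse; the convention will be fixed by the eigencharacters of $y_j,z_j$). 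Note that $R/(I^{(n)}+\fa(\chi))$ is $\bigotimes_j$ of $\bF[y_j]/(y_j^n)$, $\bF[z_j]/(z_j^n)$, or $\bF[y_j,z_j]/(y_jz_j,y_j^n,z_j^n)$.

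This is a problem. In the $y_jz_j$ case the box $\{k'_j,k'_{j+f}\leq n-1\}$ is too large: it contains mixed monomials $y_j^az_j^b$ with $a,b>0$, which should vanish. So a single $f_{k,v}$ does not work there. Instead, for indices with $t_j=y_jz_j$ we use a "cross" shape. Concretely, we take the subrepresentation generated by the vectors $f_{k,v}$ over all $k$ in which, for each such $j$, exactly one of $k_j,k_{j+f}$ equals $n-1$ and the other equals $0$. By the lemma its basis is the set of $f_{k',v}$ with $k'$ below one of these generators, which is precisely the cross shape. It is still true that $\tau^{(n)}_\chi[\fm]=\bF f_{0,v}$. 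Alternatively, one can build the same object as a fibre/amalgamated sum of tensor-type products of the uniserial pieces $E_j^{\pm}(\chi,n-1)$.

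The next step is to identify the $\fm$-adic filtration. We will show that $\tau_\chi^{(n)}[\fm^{i}]$ is spanned by the $f_{k',v}$ with $\sum_j(k'_j+k'_{j+f})<i$. The inclusion $\supseteq$ should follow from the action formula for $(1+\varpi_D[\mu])$ together with the fact that $U_D^1/U_D^2$-translations lower the digit sum. For the reverse inclusion we use the projector trick from the socle argument: applying $\sum_\mu \mu^{q^2-1-l}(1+\varpi_D[\mu])$ to a top-degree component. Then $\gr_\fm$ of the dual has a basis dual to the $f_{k',v}$, placed in degree minus the digit sum.

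The main obstacle is checking that the graded action of $y_j,z_j$ agrees with multiplication by the corresponding monomials up to nonzero scalars, and that $h_j$ acts by zero. The first thing to try is to express the images of $y_j,z_j$ in $\fm/\fm^2$ through the elements $\sum_\mu\mu^{-p^j}(1+\varpi_D[\mu])$ and $\sum_\mu\mu^{-p^{j+f}}(1+\varpi_D[\mu])$, following \cite{HW24a}. Using the displayed formula, these elements send $f_{k',v}$ to a nonzero multiple of the basis vector with $k'_j$ (resp.\ $k'_{j+f}$) lowered by one; the multiple is a binomial coefficient, nonzero since all digits are below $p$. Dually this is multiplication by $y_j$ or $z_j$. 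The element $h_j$ acts as a commutator, and since $U_D^2$ acts trivially and the generators commute modulo $U_D^2$, $h_j$ acts by zero. The resulting graded module is cyclic, generated by the dual of $f_{0,v}$, and its monomial basis matches that of $R/(I^{(n)}+\fa(\chi))$. This gives the claimed isomorphism with compatible $H$-action.
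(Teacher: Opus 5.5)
Your final, cross-shaped construction is correct. It produces the same representation as the paper, but computes its associated graded differently.

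\textbf{The paper's route.} It sets $W_{\chi,j}$ to be $E_j^{+}(n-1)$, $E_j^{-}(n-1)$ or the amalgamated sum $E_j^-(n-1)\oplus_1E_j^+(n-1)$ according as $t_j=y_j$, $z_j$ or $y_jz_j$, and puts $\tau_\chi^{(n)}=\chi\otimes\bigotimes_jW_{\chi,j}$. It then compares the $\fm$-adic filtration with the tensor product filtration $C_\bullet$. By \cite[Lemma~1.1(i)]{AJL83}, $\fm^dM\subset C_{-d}(M)$, which gives a graded map $\gr_\fm M\to\gr_C M\cong R/(I^{(n)}+\fa(\chi))$. This map is surjective by Nakayama and bijective by a dimension count.

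\textbf{Your route.} Your span of the $f_{k',v}$ over a product of segments and crosses is exactly the image of that tensor product under pointwise multiplication of the functions $\lambda\mapsto\lambda^{k}$. This works because the digits in the disjoint positions $j$ and $j+f$ simply add. So you work with the same object, but compute everything by hand inside $\Indu{U_D^2H}{\cO_D^\times}{\chi}$. Digit sum gives the socle filtration. The operators $\sum_\mu\mu^{-p^j}(1+\varpi_D[\mu])$ and $\sum_\mu\mu^{-p^{j+f}}(1+\varpi_D[\mu])$ give the graded action of $y_j$ and $z_j$, and the abelianness of $U_D^1/U_D^2$ kills $h_j$.

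\textbf{Comparison.} Your approach is more explicit and self-contained. It supplies the ``direct calculation'' of $\gr_\fm(E_j^{\pm}(s)^\vee)$ that the paper omits, and it avoids the tensor-filtration argument. The paper's approach is modular: it reduces to one-index pieces plus a general lemma on tensor products.

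There are two points to make precise in your version.

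\begin{enumerate}
\item Identifying $y_j$ (resp.\ $z_j$) with a nonzero multiple of your operator requires an argument. Note that $\fm/\fm^2$ maps isomorphically onto the augmentation ideal modulo its square for $\bF[U_D^1/U_D^2]$, since both are $2f$-dimensional. Its $2f$ eigencharacters $\alpha_j^{\pm1}$ are pairwise distinct, so each $H$-eigenline is determined. You also need that your operator lies in $\fm\setminus\fm^2$.
\item For the inclusion of $\tau_\chi^{(n)}[\fm^i]$ into the span of the $f_{k',v}$ of digit sum $<i$, do not rely on the single projector with exponent $q^2-1-l$ without proving it lies in $\fm^{l_0+\cdots+l_{2f-1}}$. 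It is simpler to use products of the degree-one operators. Alternatively, observe that your explicit graded action shows $\tau^\vee$ is generated by the dual of $f_{0,v}$ with each dual basis vector reached exactly. Then the digit-sum filtration coincides with the $\fm$-adic one, which is the same Nakayama step the paper uses.
\end{enumerate}
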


\begin{proof}
    This is the same as \cite[Lemma~2.4.1]{BHH$^+$25c}. We recall the construction for convenience of the reader. 
    Write $E_j^-(s)=E_j^-(1,s)$ and $E_j^+(s)=E_j^+(1,s)$ for $0\leq s\leq p-1$. 
    A direct calculation shows that
    $$\gr_\fm(E_j^-(s)^\vee)\cong \bF[y_j,z_j]/(y_j^{s+1}, z_j), \quad \gr_\fm(E_j^+(s)^\vee)\cong \bF[y_j,z_j]/(y_j, z_j^{s+1}). $$
    Moreover, for the amalgamated sum $E_j^-(s)\oplus_{1}E_j^+(s):=(E_j^-(s)\oplus E_j^+(s))/1$, we have 
    $$\gr_\fm\big((E_j^-(s)\oplus_{1}E_j^+(s))^\vee\big)\cong \bF[y_j,z_j]/(y_j^{s+1}, y_jz_j, z_j^{s+1}). $$
    Define 
    $$W_{\chi,j}=\begin{cases}
        E_j^{+}(n-1) \quad & t_j=y_j,\\
        E_j^{-}(n-1) \quad & t_j=z_j,\\
        E_j^-(n-1)\oplus_1 E_j^{+}(n-1) \quad & t_j=y_jz_j,\\
    \end{cases}$$
    and $\tau_\chi^{(n)}=\chi\otimes (\bigotimes_{j=0}^{f-1}W_{\chi,j})$. 
    For simplicity we write $M_j=W_{\chi,j}^\vee$ and $M=\bigotimes_{j=0}^{f-1} M_j$. 
    In all cases we have 
    $$\gr_\fm M_j=\bF[y_j,z_j]/(y_j^n, z_j^n,t_j).$$
    Denote by $C_\bullet$ the tensor product filtration on $M$. 
    Then $\gr_{C_\bullet}(M)\cong \bigotimes_{j=0}^{f-1}\gr_\fm(M_j)\cong R/(I^{(n)}+\fa(\chi))$. 
    By \cite[Lemma~1.1(i)]{AJL83}, we have $\fm^d M\subset C_{-d}(M)$, so we have a natural graded morphism 
    $$\phi:\gr_\fm M\to \gr_{C_\bullet}(M).$$
    To show $\phi$ is an isomorphism, it suffices to show $\phi$ is surjective as $M$ is finite-dimensional. 
    We have $\fm^0M=C_0(M)=M$, so $\phi$ is surjective on the degree-$0$ part. 
    Since $\gr_{C_\bullet}(M)$ is generated by its degree-$0$ part, $\phi$ is surjective by Nakayama's lemma. 
\end{proof}

Since $\gr_\fm\tau^\vee$ has the same form as its analogue in \cite[\S~2.4]{BHH$^+$25c}, the arguments there also prove the following results.

\begin{lemma}\label{(2n-1)-generic}
    Suppose that $\bar{\rho}$ is $(2n-1)$-generic. Then there exists an $\cO_D^\times$-equivariant embedding $\tau^{(n)}\hookrightarrow \pi|_{\cO_D^\times}$ such that the composite of the induced maps $N\to \gr_\fm\pi^\vee\to \gr_\fm(\tau^{(n)})^\vee$ is the natural map $N\to N/I^{(n)}N$. 
    In particular, the composite map is an isomorphism in degree $\geq -(n-1)$ and $\tau^{(n)}[\fm^n]=\pi[\fm^n]$. 
\end{lemma}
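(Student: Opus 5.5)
The plan is to follow the argument of \cite[Lemma~2.4.2]{BHH$^+$25c}: build the embedding $\tau^{(n)}\hookrightarrow\pi|_{\cO_D^\times}$ one block $\tau_\chi^{(n)}$ at a time, using only Condition~\ref{condition-i} and the fact that $\tau_\chi^{(n)}$ has simple socle $\chi$. First I will fix, for each $\chi\in W_D(\bar{\rho})$ and $1\le k\le r$, an embedding $\chi\hookrightarrow\pi$ onto the line spanned by $v_{\chi,k}$. I then want to extend each such embedding to an embedding $\tau_\chi^{(n)}\hookrightarrow\pi$. Once the extensions exist, their sum
\[
\tau^{(n)}=\Bigl(\bigoplus_{\chi}\tau_\chi^{(n)}\Bigr)^{\oplus r}\to\pi
\]
is injective on the socle, since the socle is $\bigoplus\chi^{\oplus r}\cong\pi[\fm]$ by Condition~\ref{condition-i}. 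A map that is injective on the socle is injective, because every nonzero subrepresentation meets the socle. Hence the sum is an embedding.

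The extension will be done inductively along a filtration of $\tau_\chi^{(n)}$ by subrepresentations $0\subset T_1=\chi\subset\cdots\subset T_m=\tau_\chi^{(n)}$ with successive quotients characters $\psi$. At each step I need the restriction map $\Hom(T_{i+1},\pi)\to\Hom(T_i,\pi)$ to hit the given embedding. The obstruction lies in $\Ext^1_{\cO_D^\times/Z_D^1}(\psi,\pi)$, where $\psi$ runs over the graded characters of $\tau_\chi^{(n)}$, namely $\chi\prod_j\alpha_j^{i_j}$ with $|i_j|\le n-1$ and the sign constraints imposed by $t_j$. By Condition~\ref{condition-iii} this group vanishes unless $\psi\in W_D(\bar{\rho})$.

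So the key step is to show that no such $\psi$ with $i\neq 0$ lies in $W_D(\bar{\rho})$. Here I use Lemma~\ref{key-lemma} with $m=n-1$, which applies because $\bar{\rho}$ is $(2n-1)$-generic and hence $n$-generic. It gives $|i_j|\le 1$, and moreover $i_j=-1$ forces $t_j=y_j$ while $i_j=1$ forces $t_j=z_j$. The characters of $W_{\chi,j}$ violate exactly this: the module $E_j^+$ is used when $t_j=y_j$, which only allows $i_j\ge 0$, and the sign constraints exclude the case the lemma demands. So all obstructions vanish.

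The main obstacle, if the above is too naive, is that obstructions for a non-split extension involve more than $\Ext^1$ of one character. In that case I will follow \cite{BHH$^+$25c} and argue via $\Ext^1(\tau_\chi^{(n)}/\chi,\pi)$ and a dévissage, still using only Condition~\ref{condition-iii} and the key lemma; this is where the genericity bound $(2n-1)$ enters. It is needed so that the differences of characters arising from two distinct blocks, which have $|i_j|\le 2n-2$, are controlled, exactly as in Corollary~\ref{N-multi-free}.

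Finally, I will identify the composite $N\to\gr_\fm\pi^\vee\to\gr_\fm(\tau^{(n)})^\vee\cong N/I^{(n)}N$. Both maps are $\gr\Lambda$-linear, and in degree $0$ the composite is the identity on $\pi[\fm]^\vee=\tau^{(n)}[\fm]^\vee$, since the embedding is compatible with the chosen bases $v_{\chi,k}$. Since $N$ is generated in degree $0$, the composite equals the natural projection $N\to N/I^{(n)}N$. This projection is an isomorphism in degrees $\ge -(n-1)$, because $I^{(n)}$ is generated in degree $-n$ modulo the $h_j$, which already act by zero.

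Then $\gr_\fm\pi^\vee\to\gr_\fm(\tau^{(n)})^\vee$ is surjective in degrees $\ge -(n-1)$, and it is also injective there because $N\to\gr_\fm\pi^\vee$ is surjective. It follows that $\dim\pi^\vee/\fm^n\pi^\vee=\dim(\tau^{(n)})^\vee/\fm^n(\tau^{(n)})^\vee$. Dualizing gives $\dim\pi[\fm^n]=\dim\tau^{(n)}[\fm^n]$, and together with the inclusion $\tau^{(n)}[\fm^n]\subset\pi[\fm^n]$ this yields the equality $\tau^{(n)}[\fm^n]=\pi[\fm^n]$.
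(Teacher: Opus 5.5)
Your proposal is correct and follows the argument the paper takes from the cited $\GL_2$ source (its Lemma~2.4.2): you extend each socle embedding $\chi\hookrightarrow\pi$ along a composition series of $\tau_\chi^{(n)}$, with each one-step obstruction lying in $\Ext^1_{\cO_D^\times/Z_D^1}(\psi,\pi)$ and killed by Condition~\ref{condition-iii}, then pin down the composite $N\to N/I^{(n)}N$ from its degree-$0$ part and conclude by comparing $\dim\pi^\vee/\fm^n\pi^\vee$ with $\dim(\tau^{(n)})^\vee/\fm^n(\tau^{(n)})^\vee$. Your hedging paragraph is unnecessary, because that one-step obstruction is exactly right, and Lemma~\ref{key-lemma} with $m=n-1$ already shows, under only $n$-genericity, that no non-socle constituent of $\tau_\chi^{(n)}$ lies in $W_D(\bar{\rho})$; the $(2n-1)$ bound would only be needed if you instead derived this from the multiplicity-freeness of Corollary~\ref{N-multi-free}.
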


\begin{proof}
    See \cite[Lemma~2.4.2]{BHH$^+$25c}.
\end{proof}

\begin{corollary}
    Suppose that $\bar{\rho}$ is $(2n-1)$-generic. Then $\bigoplus_{\chi\in W_D(\bar{\rho})}\tau_\chi^{(n)}$ is multiplicity-free, and every irreducible constituent of $\pi[\fm^n]=\tau^{(n)}[\fm^n]$ occurs with multiplicity $r$. 
\end{corollary}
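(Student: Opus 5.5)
The corollary has two assertions: multiplicity-freeness of $\bigoplus_{\chi}\tau_\chi^{(n)}$, and the constituent multiplicities of $\pi[\fm^n]$. Both reduce to Corollary~\ref{N-multi-free} via the graded description of $\tau^{(n)}$ and Lemma~\ref{(2n-1)-generic}. Throughout, $\bar{\rho}$ is $(2n-1)$-generic; since $n>1$ is needed for Corollary~\ref{N-multi-free}, I first treat $n=1$ separately. In that case $\tau_\chi^{(1)}=\chi$, and the claims are just that the characters in $W_D(\bar{\rho})$ are pairwise distinct and that $\pi[\fm]=\tau^{(1)}[\fm]$ has each of them with multiplicity $r$. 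Both follow from Condition~\ref{condition-i}.

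For $n>1$, I first compare constituents with the graded module. The Jordan--Hölder constituents of a finite-dimensional smooth $\cO_D^\times$-representation $V$ are characters of $H$. Dualizing, the constituents of $V^\vee$ are the inverses of those of $V$, with multiplicities. Passing to $\gr_\fm V^\vee$ preserves the $H$-semisimplification, because $H$ has order prime to $p$ and the $\fm$-adic filtration is $H$-stable. So $[V:\chi]=[\gr_\fm V^\vee:\chi^{-1}]$. Now apply the preceding lemma, which gives
\[
\gr_\fm\Big(\big(\textstyle\bigoplus_{\chi}\tau_\chi^{(n)}\big)^\vee\Big)\cong N^{(1)}/I^{(n)}N^{(1)},\qquad N^{(1)}=\textstyle\bigoplus_{\chi\in W_D(\bar{\rho})}\chi^{-1}\otimes R/\fa(\chi).
\]
The proof of Corollary~\ref{N-multi-free} shows exactly that $N^{(1)}/I^{(n)}N^{(1)}$ is multiplicity-free under $(2n-1)$-genericity. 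Hence $\bigoplus_\chi\tau_\chi^{(n)}$ is multiplicity-free.

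For the second assertion, Lemma~\ref{(2n-1)-generic} gives $\pi[\fm^n]=\tau^{(n)}[\fm^n]$. Since $\tau^{(n)}\cong(\bigoplus_\chi\tau_\chi^{(n)})^{\oplus r}$, every constituent of $\tau^{(n)}$ occurs with multiplicity exactly $r$. The remaining point is that $\tau^{(n)}[\fm^n]=\tau^{(n)}$, i.e.\ $\fm^n$ annihilates $(\tau^{(n)})^\vee$. This holds because $\gr_\fm((\tau^{(n)})^\vee)\cong N/I^{(n)}N$ is supported in degrees $\geq -(n-1)\cdot f\cdot 2$. I need to be more careful here: $\fm^n$-torsion is the right notion, and the support bound is not automatically $\geq -(n-1)$.

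This last point is the main obstacle. If $\tau^{(n)}[\fm^n]\subsetneq\tau^{(n)}$, I would instead argue directly. $\tau^{(n)}[\fm^n]$ is dual to $(\tau^{(n)})^\vee/\fm^n$, whose graded module is the truncation $(N/I^{(n)}N)_{\geq -(n-1)}$. This truncation is an $H$-stable graded quotient, and it is still $r$ copies of a multiplicity-free module, namely the truncation of $N^{(1)}/I^{(n)}N^{(1)}$. So each constituent of $\pi[\fm^n]$ again has multiplicity $r$. The same reasoning can also be phrased via Lemma~\ref{(2n-1)-generic}, whose composite $N\to\gr_\fm(\tau^{(n)})^\vee$ is an isomorphism in degrees $\geq -(n-1)$; this identifies the constituents of $\pi[\fm^n]$ with the $H$-characters of $N_{\geq -(n-1)}$ (inverted), each occurring $r$ times.
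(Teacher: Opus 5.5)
Your proposal is correct and follows essentially the same route as the paper, which defers to the $\GL_2$ argument of \cite[Cor.~2.4.3]{BHH$^+$25c}. Multiplicity-freeness of $\bigoplus_\chi\tau_\chi^{(n)}$ comes from $\gr_\fm((\tau_\chi^{(n)})^\vee)\cong\chi^{-1}\otimes R/(I^{(n)}+\fa(\chi))$ together with Corollary~\ref{N-multi-free}, and the multiplicity-$r$ statement comes from $\pi[\fm^n]=\tau^{(n)}[\fm^n]$ in Lemma~\ref{(2n-1)-generic}.

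You should delete the tentative claim $\tau^{(n)}[\fm^n]=\tau^{(n)}$. It is false as soon as $f\geq 2$ and $n\geq 2$, because the graded module is nonzero in degree $-f(n-1)$. What actually finishes the proof is your truncation argument, or more simply the identity $\tau^{(n)}[\fm^n]=(\bigoplus_\chi\tau_\chi^{(n)}[\fm^n])^{\oplus r}$, in which the inner sum is a subrepresentation of a multiplicity-free representation.
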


\begin{proof}
    See \cite[Cor.~2.4.3]{BHH$^+$25c}.
\end{proof}

Let $\tau=\tau^{(3)}$, so $\gr_\fm\tau^\vee\cong N/IN$, 
which admits a minimal gr-free resolution $G_\bullet=G'_\bullet\oplus G''_\bullet$, 
with $G'_\bullet$ a minimal gr-free resolution of $N$. 
More precisely, let $G_{\chi,\bullet}$ be the minimal gr-free resolution of $\chi^{-1}\otimes R/(I+\fa(\chi))$ and write $G_\bullet=(\bigoplus_{\chi}G_{\chi,\bullet})^{\oplus r}$. 
By Proposition~\ref{properties-of-resolution}(4), for each $\chi$ we can lift $G_{\chi,\bullet}$ to a strict filt-free resolution $L_{\chi,\bullet}$ of $\tau_\chi^\vee$. 
We can give a compatible $H$-action on each $L_{\chi,\bullet}$ as in Proposition~\ref{properties-of-resolution}(2). 
Let $L_\bullet=(\bigoplus_{\chi}L_{\chi,\bullet})^{\oplus r}$; then $L_\bullet$ is a strict filt-free resolution of $\tau^\vee$ with compatible $H$-action. 

\begin{lemma}
    For $i\geq 0$, there exists a decomposition $L_i=L_i'\oplus L_i''$ as filt-free $\Lambda$-modules with compatible $H$-action 
    whose associated graded decomposition is $G_i=G'_i\oplus G_i''$. 
\end{lemma}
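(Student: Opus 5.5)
The plan is to use Lemma~\ref{lemma-of-filt} in the same way as the corresponding step of \cite[\S~2.4]{BHH$^+$25c}. Both the filtered modules and the decomposition are compatible with the direct sum over $\chi$ and the $r$ copies. Moreover $G'_\bullet$ is built from the resolutions of the factors $(\gr \Lambda)_j/\fb(\chi)_j$ inside those of $(\gr \Lambda)_j/(\fb(\chi)_j+I_j)$ (Lemma~\ref{resolution-of-N}). So it is enough to treat a single $L_{\chi,i}$ with $G_{\chi,i}=G'_{\chi,i}\oplus G''_{\chi,i}$.

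First, I lift the graded decomposition to a decomposition of filt-free modules. Pick $H$-eigenvector gr-free bases of $G'_{\chi,i}$ and $G''_{\chi,i}$, with generators in degrees $-k$ and $-l$. Lift each basis element to an $H$-eigenvector of $L_{\chi,i}$ lying in the corresponding filtration step; this is possible because $H$ has order prime to $p$, so we can average by the $H$-projector. Let $L'_i$ and $L''_i$ be the $\Lambda$-submodules generated by the lifted bases, each carrying the filtration making it filt-free with those shifts. The natural map $L'_i\oplus L''_i\to L_{\chi,i}$ is a filtered morphism of filt-free modules. Its associated graded is the identity of $G_{\chi,i}$, so by the standard fact that a filtered map of complete, exhaustively filtered modules with bijective $\gr$ is a strict isomorphism, it is an isomorphism of filtered modules. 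Both summands are $H$-stable because the generators are $H$-eigenvectors and the $H$-action is compatible. By construction, $\gr L'_i=G'_{\chi,i}$ and $\gr L''_i=G''_{\chi,i}$.

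I expect the main obstacle to be whether the statement also needs the decomposition to be canonical, i.e.\ to satisfy the hypotheses (1)--(2) of Lemma~\ref{lemma-of-filt}, since that lemma is presumably what comes next. For that I would check two things from the explicit resolutions of the tensor factors. First, the generators of $G'_i$ sit in degrees $\le$ those of $G''_i$; this follows from the degree bounds $[-2i,-i]$ of Lemma~\ref{tor-of-N} for $G'$, compared with the generators of $G''$, which involve $y_j^3,z_j^3$ and so sit in lower degree. Second, the $H$-constituents of $\bF\otimes G'_i$ and $\bF\otimes G''_i$ are disjoint. This uses the genericity through Lemma~\ref{key-lemma}, exactly as in \cite{BHH$^+$25c}. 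For the present statement, though, only the existence of the lift is needed, and the argument above gives it.
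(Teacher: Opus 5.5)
Your argument is correct and is essentially the proof the paper cites from \cite[Lemma~2.4.5]{BHH$^+$25c}. You take gr-free bases of $G'_i$ and $G''_i$ made of homogeneous $H$-eigenvectors, lift them with the idempotents of $\bF[H]$ to $H$-eigenvectors in the matching filtration steps of $L_i$, and conclude that the filtered map from the filt-free module on these lifts to $L_i$ is a strict isomorphism, since its associated graded map is bijective and both sides are complete and separated. Your closing remarks on conditions (1)--(2) of Lemma~\ref{lemma-of-filt} belong to Lemma~\ref{min-of-L}, not to this statement, and there your inequality is reversed: by your own reasoning the generators of $G''_i$ sit in lower degrees, so those of $G'_i$ sit in degrees $\geq$ those of $G''_i$.
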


\begin{proof}
    See \cite[Lemma~2.4.5]{BHH$^+$25c}, whose proof applies to general filtered modules. 
\end{proof}

\begin{lemma}\label{min-of-L}
    Suppose that $\bar{\rho}$ is $5$-generic. Then $L_\bullet$ is minimal. 
    Moreover, for $i\in\{0,1,2\}$, $L_i=L_i'\oplus L''_i$ satisfies conditions (1) and (2) of Lemma~\ref{lemma-of-filt}. 
\end{lemma}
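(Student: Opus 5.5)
The plan is to reduce both assertions to statements about $H$-eigencharacters and degrees of $\bF\otimes_{\gr\Lambda}G_\bullet$, and then to use the genericity hypothesis through Lemma~\ref{key-lemma}.

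\textbf{Minimality.} Since $L_\bullet$ is a strict filt-free lift of the minimal resolution $G_\bullet$, the remark after Proposition~\ref{properties-of-resolution} shows that $L_\bullet$ is minimal if and only if $\dim_\bF\Tor_i^\Lambda(\bF,\tau^\vee)=\dim_\bF\Tor_i^{\gr\Lambda}(\bF,N/IN)$ for all $i$. I would argue directly on the complex $\bF\otimes_\Lambda L_\bullet$, which is a complex of $H$-modules. Its terms are $\bF\otimes G_i$, with their characters and degrees. A transition map $\bar d_i:\bF\otimes L_i\to\bF\otimes L_{i-1}$ is $H$-equivariant. Because $\gr$ of $L_\bullet$ is the minimal complex $G_\bullet$, it must strictly raise filtration degree (it lands in $\fm L_{i-1}$ modulo the filtration). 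So it suffices to show that no character occurs in $\bF\otimes G_i$ in some degree $d$ and in $\bF\otimes G_{i-1}$ in some degree $d'<d$, i.e.\ with a strictly larger filtration index, in the appropriate sense.

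The generators of the resolution of $\chi^{-1}\otimes R/(I+\fa(\chi))$ are explicit, because it is a tensor product over $j$ of Koszul-type resolutions of $(\gr\Lambda)_j/(t_j,h_j,y_j^3,z_j^3)$. Their characters are $\chi^{-1}\prod_j\alpha_j^{e_j}$ with $|e_j|\le 3$, and the degree is determined by the generator monomials: $y_j,z_j$ or $y_jz_j$, together with $y_j^3,z_j^3,h_j$ and their syzygies.

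\textbf{Comparing two characters.} Suppose $\chi^{-1}\prod\alpha_j^{e_j}=\chi'^{-1}\prod\alpha_j^{e'_j}$ with $|e_j|,|e'_j|\le 3$. Then $|e_j-e'_j|\le 6$. With $5$-genericity, Lemma~\ref{key-lemma} applies with $m=4$. This needs a sharper bookkeeping showing that the relevant differences are at most $4$. My plan is to use that a nonzero differential only connects generators of adjacent homological degree with small degree gap, so each individual shift $e_j-e'_j$ is bounded by $4$. This bookkeeping is the place where the refinement from $9$- to $5$-genericity enters.

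Lemma~\ref{key-lemma} then forces $|e_j-e'_j|\le1$, with the sign tied to $t_j$. Using Corollary~\ref{key-lemma-inverse}, I would check factor by factor that such coincidences occur only in degrees compatible with $\bar d_i=0$. This is the analogue of the degree/character table argument in \cite[Lemma~2.4.6]{BHH$^+$25c}.

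\textbf{Conditions of Lemma~\ref{lemma-of-filt} for $i\le2$.} Condition (1) asks that the generators of $G'_i$ have degrees $\ge$ those of $G''_i$, i.e.\ that the $G'$ generators lie in less negative filtration. By Lemma~\ref{tor-of-N}, the generators of $G'_i$ lie in degrees $[-2i,-i]$. The generators of $G''_i$ involve at least one of $y_j^3,z_j^3$ (or $h_j$ paired with them), so they have degree $\le-3$ for $i=1$ and $\le -4$ for $i=2$. These bounds are against $-2$ and $-4$ respectively. The boundary case $i=2$, where $G'_2$ can sit in degree $-4$, needs a check that the ordering still holds. That check may need the precise generator list, since $h_j$ has degree $-2$.

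Condition (2) asks that the characters of $\bF\otimes G'_i$ and $\bF\otimes G''_i$ are disjoint. A generator of $G''_i$ carries an exponent $|e_j|=3$ in some coordinate. Coincidence with a $G'_i$ character, whose exponents are $\le 2$, together with Lemma~\ref{key-lemma} (difference at most $5\le m$, which is too large for $m=4$) is where genericity is tight. I would handle it by noting that in $G''_i$ for $i\le 2$ the generator involving $y_j^3$ has only one further factor. This keeps the total shift at most $4$, and Lemma~\ref{key-lemma} then yields a contradiction, since the shift would have to be $\pm1$ but is at least $2$.

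The main obstacle is precisely this tight bookkeeping, which shows that all relevant character differences stay within $|i_j|\le4$ under $5$-genericity.
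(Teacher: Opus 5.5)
Your minimality argument has a real gap: it tries to prove a separation statement that is false, and it misses the structural fact that actually makes minimality easy. Your condition~(2) argument also leaves one case unhandled. The paper itself only cites the corresponding lemma of the $\GL_2$ paper, so the comparison below is against what a correct argument needs.

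\textbf{Minimality.} You want to show that no $H$-character occurs both in $\bF\otimes G_i$ in degree $d$ and in $\bF\otimes G_{i-1}$ in some degree $d'<d$, comparing different $\chi,\chi'$ via Lemma~\ref{key-lemma}. Across different $\chi$ this separation fails, even for shifts $\pm1$.

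\emph{Counterexample.} Take $f=3$, $\bar{\rho}$ irreducible, $w=(1,1,1)$, $\chi=\chi_{w,(0,0,0)}$ and $\chi'=\chi_{w,(-1,-1,-1)}$.
\begin{itemize}
\item Both pairs satisfy Relations~\ref{RI} and~\ref{RII}, and $\chi'=\chi\alpha_0\alpha_1\alpha_2$.
\item For $\chi$ one has $t_j=z_j$, and for $\chi'$ one has $t_j=y_j$, for all $j$.
\item The Koszul generator $z_0\wedge z_1\wedge z_2$ of $G_{\chi,3}$ (degree $-3$) and $h_0\wedge h_1$ of $G_{\chi',2}$ (degree $-4$) both have character $\chi'^{-1}$.
\end{itemize}
So degrees and characters alone cannot force $\bar d_3=0$.

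\emph{The shift bound.} Your per-slot bound of $4$ via a ``small degree gap'' is also unsupported. The map $\bar d_i$ may lower the filtration index by any amount. Moreover, for irreducible $\bar{\rho}$, $\chi_{\mathbf 0,\mathbf 0}$ and its $q$-th power differ by $\prod_j\alpha_j^{r_j}$. Since $5$-genericity allows $r_j\in\{5,6\}$, this produces Tor-character coincidences with per-slot shifts $5$ and $6$.

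\emph{The missing idea.} The complex $L_\bullet=(\bigoplus_\chi L_{\chi,\bullet})^{\oplus r}$ is a direct sum of complexes, so minimality only has to be checked for a single $\tau_\chi$.
\begin{itemize}
\item Inside one block all exponents satisfy $|c_j|\le 3$. Since $p$ is large, the $H$-character therefore determines the exponent vector $c$.
\item From the explicit per-slot resolutions (generators $1,t_j,h_j,t_jh_j$, plus $y_j^3$ or $z_j^3$ and their syzygies), one checks that degree $+\,2\cdot{}$homological degree depends only on $c$.
\item Hence a character occurring in $\bF\otimes G_{\chi,i}$ in degree $d$ occurs in $\bF\otimes G_{\chi,i-1}$ only in degree $d+2$.
\item The $H$-equivariant map $\bar d_i$ lowers the filtration index by at least one, so it vanishes.
\end{itemize}
No genericity is used here. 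In particular, this is not where the passage from $9$- to $5$-genericity happens; the paper places that refinement in Proposition~\ref{tor-inj}.

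\textbf{Conditions of Lemma~\ref{lemma-of-filt}.} Condition~(1) is already settled by your degree bounds. The required inequality $k_i\ge l_j$ is non-strict, so the coincidence at degree $-4$ for $i=2$ is harmless.

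Condition~(2) is where $5$-genericity is used, but your bookkeeping needs correcting.
\begin{itemize}
\item Generators of $G'_i$ have exponents $e_j\in\{0,\pm1\}$, with $e_j=1$ only if $t_j=y_j$ and $e_j=-1$ only if $t_j=z_j$ (not exponents up to $2$).
\item Generators of $G''_i$ have $|e'_j|\le3$.
\item So every per-slot shift is at most $4$, and Lemma~\ref{key-lemma} applies with $m=4$ directly. No argument about ``one further factor'' is needed.
\end{itemize}
However, not every generator of $G''_2$ carries an exponent $\pm3$. Those coming from $y_jz_j^3$ and $z_jy_j^3$ have exponents $-2$ and $+2$. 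When $e'_j=\pm2$ and $e_j=\pm1$ with the same sign, the shift is $\pm1$, so ``the shift would be at least $2$'' gives no contradiction. You need the sign clause of Lemma~\ref{key-lemma}: a shift $+1$ at slot $j$ forces $t_j(\chi)=z_j$, whereas $e_j=+1$ requires $t_j(\chi)=y_j$, and symmetrically for $-1$.
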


\begin{proof}
    See \cite[Lemma~2.4.6]{BHH$^+$25c}.
\end{proof}

\begin{corollary}\label{degenerated-spec-seq}
    Suppose that $\bar{\rho}$ is $5$-generic. For $i\geq 0$, we have a canonical isomorphism
    $$\Tor_i^{\gr \Lambda}(\bF, \gr_\fm\tau^\vee)\cong \gr\Tor_i^\Lambda(\bF, \tau^\vee).$$
\end{corollary}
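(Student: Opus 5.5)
We derive the statement from Lemma~\ref{min-of-L} together with the standard comparison between filtered and graded minimal resolutions. By construction, $L_\bullet$ is a strict filt-free resolution of $\tau^\vee$ with $\gr L_\bullet=G_\bullet$. Here $G_\bullet$ is a minimal gr-free resolution of $\gr_\fm\tau^\vee\cong N/IN$. Since $L_\bullet$ is minimal by Lemma~\ref{min-of-L}, the transition maps of $\bF\otimes_\Lambda L_\bullet$ all vanish. Therefore
$$\Tor_i^\Lambda(\bF,\tau^\vee)=\bF\otimes_\Lambda L_i=L_i/\fm L_i.$$
This space carries the quotient filtration induced from $L_i$, which is exactly the good filtration of Proposition~\ref{properties-of-resolution}(3).

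Next we compute the associated graded of this filtration. Write $L_i\cong\bigoplus_k\Lambda(-k_i)$ as filtered modules. Then $L_i/\fm L_i$ has a basis given by the images of the filtered generators, sitting in degrees $k_i$. Since the filtration on $\fm L_i$ is the induced one and $\gr(\fm\Lambda)=\gr^{<0}\Lambda$, we get
$$\gr(L_i/\fm L_i)\cong \gr L_i/\gr(\fm L_i)=G_i/(\gr\Lambda)_{<0}G_i=\bF\otimes_{\gr\Lambda}G_i.$$
The only thing to check here is strictness of $\fm L_i\hookrightarrow L_i$ and of the quotient map. For filt-free modules this holds because $F_n(\fm L_i)=\fm L_i\cap F_nL_i$ is the degree-wise direct sum of $\fm\cap F_{n-k}\Lambda$. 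We use the induced filtration on the ideal here, which is the convention making $\gr$ exact for the sequence $0\to\fm L_i\to L_i\to L_i/\fm L_i\to 0$.

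Finally, $G_\bullet$ is minimal, so $\bF\otimes_{\gr\Lambda}G_\bullet$ also has zero differentials and $\Tor_i^{\gr\Lambda}(\bF,\gr_\fm\tau^\vee)=\bF\otimes_{\gr\Lambda}G_i$. Combining this with the previous step gives the isomorphism
$$\Tor_i^{\gr\Lambda}(\bF,\gr_\fm\tau^\vee)\cong\gr\Tor_i^\Lambda(\bF,\tau^\vee).$$
It is compatible with the $H$-actions, since all constructions are $H$-equivariant.

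It remains to check canonicity, meaning independence of the choice of $L_\bullet$. Any two minimal filtered lifts are related by a filtered homotopy equivalence, which induces the identity on $\Tor$ and a filtered isomorphism of the induced filtrations. Alternatively, one can view the statement as degeneration at $E_1$ of the spectral sequence of the filtered complex $\bF\otimes_\Lambda L_\bullet$. Its $E_1$ page is $\bF\otimes G_\bullet$, and the differentials $d_r$ for $r\ge1$ vanish because the underlying complex has zero differentials (by minimality). The main obstacle is therefore entirely contained in Lemma~\ref{min-of-L}, namely that the lift $L_\bullet$ of the minimal $G_\bullet$ is again minimal. That lemma is exactly where $5$-genericity is used: it ensures that a nonzero differential in $\bF\otimes L_\bullet$ would have to connect equal $H$-characters in degrees forbidden by Lemma~\ref{tor-of-N} and Corollary~\ref{N-multi-free}. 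Granting it, the corollary is formal.
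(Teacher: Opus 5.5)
Your proposal is correct and follows essentially the same route as the argument the paper cites. Lemma~\ref{min-of-L} makes the differentials of $\bF\otimes_\Lambda L_\bullet$ vanish, so $\gr\Tor_i^\Lambda(\bF,\tau^\vee)=\gr(\bF\otimes_\Lambda L_i)\cong\bF\otimes_{\gr\Lambda}G_i=\Tor_i^{\gr\Lambda}(\bF,\gr_\fm\tau^\vee)$, which is exactly the $E^1$-degeneration the paper later invokes in the proof of Proposition~\ref{tor-inj}.
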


\begin{proof}
    See \cite[2.4.8]{BHH$^+$25c}. 
\end{proof}

\begin{proposition}\label{tor-inj}
    Assume that $\bar{\rho}$ is $5$-generic. For $i\in\{0,1,2\}$, the natural morphism
    $$\varphi_i: \Tor_i^\Lambda(\bF, \pi^\vee)\to \Tor_i^\Lambda(\bF, \tau^\vee)$$
    is injective. 
\end{proposition}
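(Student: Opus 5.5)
We follow the strategy of the corresponding statement in \cite[\S~2.4]{BHH$^+$25c}. Dually, the map $\varphi_i$ is the transpose of the map
$$\Ext^i_{U_D^1/Z_D^1}(\bF,\tau)\to \Ext^i_{U_D^1/Z_D^1}(\bF,\pi)$$
induced by the embedding $\tau=\tau^{(3)}\hookrightarrow\pi$ of Lemma~\ref{(2n-1)-generic}, which exists since $5$-generic means $(2\cdot3-1)$-generic. So we must show this map is surjective for $i\le 2$. Equivalently, we work with the filtered resolutions. First take a minimal free resolution $P_\bullet$ of $\pi^\vee$ with compatible $H$-action, and lift the surjection $\pi^\vee\twoheadrightarrow\tau^\vee$ to an $H$-equivariant chain map $P_\bullet\to L_\bullet$. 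Here $L_\bullet$ is minimal by Lemma~\ref{min-of-L}. Then $\varphi_i$ is the induced map $\bF\otimes P_i\to\bF\otimes L_i$, and injectivity is what we must prove.

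For $i=0$ the claim is immediate. Indeed, $\Tor_0^\Lambda(\bF,\pi^\vee)=\pi[\fm]^\vee$ and $\Tor_0^\Lambda(\bF,\tau^\vee)=\tau[\fm]^\vee$, and these agree because $\tau[\fm]=\pi[\fm]$.

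For $i=1,2$ we proceed by comparison with the graded side. By Lemma~\ref{tor-of-N}, $\Tor_i^\Lambda(\bF,\pi^\vee)\cong\Tor_i^{\gr\Lambda}(\bF,N)$ as $H$-modules, and these are the characters $\chi^{-1}$ with $\chi\in W_D(\bar\rho)$, each with multiplicity $m_i$. By the Corollary after Lemma~\ref{resolution-of-N}, $\Tor_i^{\gr\Lambda}(\bF,N)\to\Tor_i^{\gr\Lambda}(\bF,N/IN)$ is injective with image $\bF\otimes G_i'$. By Corollary~\ref{degenerated-spec-seq}, $\gr\Tor_i^\Lambda(\bF,\tau^\vee)=\bF\otimes G_i$. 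By Lemma~\ref{min-of-L}, $L_i=L_i'\oplus L_i''$ satisfies conditions (1) and (2) of Lemma~\ref{lemma-of-filt}. In particular, the $H$-constituents of $\bF\otimes L_i'$ and $\bF\otimes L_i''$ are disjoint.

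The plan is to show that the composite $\bF\otimes P_i\to\bF\otimes L_i\to\bF\otimes L_i'$ is an isomorphism. Both sides have the same $H$-character, so it suffices to prove injectivity. One then concludes by induction on $i$, using Lemma~\ref{inj-of-resolution} to see that $P_i\to L_i$ is a split injection onto an $H$-stable summand. For the isomorphism, we use the fact that $\pi\to$ its image is controlled in low degree: the composite $N\to\gr_\fm\pi^\vee\to N/IN$ is the natural map, which is an isomorphism in degrees $\ge -2$. Lemma~\ref{tor-of-N} says $\Tor_i^{\gr\Lambda}(\bF,N)$ lives in degrees $[-2i,-i]$, so degrees $\ge -4$ for $i\le2$. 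We would apply Lemma~\ref{truncation}(2) to compare truncated Tor's, using the induced good filtration on $\Tor_i^\Lambda(\bF,\pi^\vee)$ from Proposition~\ref{properties-of-resolution}(3). The main obstacle is exactly this degree bookkeeping: the isomorphism region $\ge-2$ does not cover the full range $[-4,-1]$ of $\Tor_2$. In the range it does not cover, we use the $H$-eigencharacter separation instead. Condition (2) of Lemma~\ref{lemma-of-filt}, derived from Corollary~\ref{N-multi-free} and Lemma~\ref{key-lemma} under $5$-genericity, forces any kernel element in those degrees to have an eigencharacter occurring in $L_i''$ but not among the $\chi^{-1}$, $\chi\in W_D(\bar\rho)$. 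This contradicts condition~(iii). This refined spectral-sequence argument is where the improvement from $9$- to $5$-genericity must be checked carefully.
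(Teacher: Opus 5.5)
\textbf{Gap.} Your case $i=0$ is fine. You also correctly locate the difficulty: $N\to\gr_\fm\pi^\vee$ is only known to be an isomorphism in degrees $\ge -2$, while $\Tor_2^{\gr\Lambda}(\bF,N)$ lives in degrees $[-4,-2]$. But the argument you give for the uncovered degrees does not work.

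The kernel of $\varphi_i$ (or of $\gr\varphi_i$) is an $H$-submodule of $\Tor_i^\Lambda(\bF,\pi^\vee)\cong\bigoplus_{\chi\in W_D(\bar{\rho})}(\chi^{-1})^{\oplus m_i}$. So by condition (iii) all its constituents already lie in $\cX=\{\chi^{-1}:\chi\in W_D(\bar{\rho})\}$. Nothing forces a kernel element to carry a character of $\bF\otimes_\Lambda L_i''$, and there is no contradiction with (iii) to be had. Condition (2) of Lemma~\ref{lemma-of-filt} only shows that injectivity of $\varphi_i$ is equivalent to the composite $\bF\otimes P_i\to\bF\otimes L_i'$ being an isomorphism. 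Character bookkeeping of that kind cannot show a map is nonzero on an isotypic component.

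You also never set up the device that relates $\Tor_i^\Lambda(\bF,\pi^\vee)$ to graded data. Lemma~\ref{truncation} only compares $\Tor_i^{\gr\Lambda}(\bF,N)$ with $\Tor_i^{\gr\Lambda}(\bF,\gr_\fm\pi^\vee)$. But $\gr\Tor_i^\Lambda(\bF,\pi^\vee)$ is the $E^\infty$-page of the spectral sequence built from a filt-free lift of a minimal gr-free resolution of $\gr_\fm\pi^\vee$, whose $E^1$-page is $\Tor_i^{\gr\Lambda}(\bF,\gr_\fm\pi^\vee)$. Its differentials must be controlled, and this is needed already for $i=1$, not only $i=2$.

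\textbf{Missing idea.} What is missing is the correct separation statement and where it is applied. The paper proves Lemma~\ref{char-sep}: $\JH_H(N_{-s})\cap\cX=\varnothing$ for $1\le s\le 4$. This follows from Lemma~\ref{key-lemma} with $m=4$, and that is exactly where $5$-genericity enters. It is used to kill the differentials $d^r\colon E^r_{2,n}\to E^r_{1,n-r}$ with target degree $n-r\in\{-3,-4\}$, as follows.
\begin{itemize}
\item Let $\cK=\ker(N\to\gr_\fm\pi^\vee)$, which is concentrated in degrees $\le -3$.
\item \emph{Targets.} The long exact $\Tor$-sequences give $E^1_{1,n}\cong(\bF\otimes_{\gr\Lambda}\cK)_n$ for $n=-3,-4$. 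Their constituents lie in $\JH_H(N_n)$.
\item \emph{Sources.} $E^1_{2,-3}$ is a quotient of $\Tor_2^{\gr\Lambda}(\bF,N)_{-3}$, because $\Tor_1^{\gr\Lambda}(\bF,\cK)_{-3}=0$. Also $E^1_{2,-2}\cong\Tor_2^{\gr\Lambda}(\bF,N)_{-2}$ and $E^1_{2,n}=0$ for $n\ge -1$. So all sources have constituents in $\cX$.
\end{itemize}
$H$-equivariance then forces these differentials to vanish. Hence $E^1_1\to E^\infty_1$ is an isomorphism in degrees $\ge -4$. This is precisely the input the argument of \cite[Prop.~2.4.9]{BHH$^+$25c} needs for $i=2$, which there was obtained from $9$-genericity. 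Without this step your proposal does not establish injectivity for $i=2$.
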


\begin{proof}
    This is essentially the same as \cite[Prop.~2.4.9]{BHH$^+$25c}. 
    We weaken the genericity hypothesis from $9$-genericity to $5$-genericity through a more detailed analysis of a specific spectral sequence. 
    Recall that it suffices to prove the following: there exist separated filtrations on 
    $\Tor_i^\Lambda(\bF,\pi^\vee)$ and $\Tor_i^\Lambda(\bF,\tau^\vee)$, such that $\varphi_i$ is a filtered morphism and 
    the induced $\gr(\varphi_i)$ is injective. 
    Choose a minimal resolution of $\gr_\fm\pi^\vee$ and lift it to a filt-free resolution $M_\bullet$ of $\pi^\vee$.
    This produces a spectral sequence $\{E^r_i:r\geq 0,i\geq 0\}$ with 
    the following properties (see \cite[\S~III.1]{LvO96} for details): 

    (1) Each $E_i^r=\bigoplus_{n\in\bZ} E_{i,n}^r$ is a graded vector space with an $H$-action, 
    and $E_i^0=\gr (\bF\otimes_\Lambda M_i)=\bF\otimes_{\gr \Lambda}\gr M_i$, $E_i^1=\Tor_i^{\gr \Lambda}(\bF,\gr_\fm\pi^\vee)$. 

    (2) For each $r\geq 1$, there exists a complex 
    $$\cdots\to E_1^r\to E_0^r\to 0,$$
    whose homology is $E_i^{r+1}$, and each differential map is a graded morphism of degree $-r$ and is $H$-equivariant. 

    (3) For each $i\geq 0$, 
    $E_i^r\cong E_i^\infty=\gr\Tor_i^\Lambda(\bF,\pi^\vee)$ for $r$ large enough. 
    
    Similarly, replacing $\pi^\vee$ by $\tau^\vee$ and using its minimal filt-free resolution $L_\bullet$, 
    we get another spectral sequence $\{E^{\prime\,r}_i:r\geq 0,i\geq 0\}$ with similar properties. 
    Moreover, by a standard argument in homological algebra, 
    the morphism $\pi^\vee \twoheadrightarrow \tau^\vee$ extends to a filtered morphism of complexes $M_\bullet\to L_\bullet$ with compatible $H$-actions. 
    Hence we get a morphism of spectral sequences: 
    $$\begin{array}{ccc}
E_i^r & \Longrightarrow &
\Tor_i^{\Lambda}(\bF,\pi^\vee)\\[3pt]
\big\downarrow && \big\downarrow\rlap{\(\scriptstyle\varphi_i\)}\\[3pt]
E_i^{\prime\,r} & \Longrightarrow &
\Tor_i^{\Lambda}(\bF,\tau^\vee).
\end{array}$$
By Corollary~\ref{degenerated-spec-seq}, 
the bottom spectral sequence degenerates on the $r=1$ page. 
Now it suffices to show that 
    $\gr \varphi_i: E_i^\infty=\gr\Tor_i^\Lambda(\bF,\pi^\vee)\to \gr\Tor_i^\Lambda(\bF,\tau^\vee)=E_i^{\prime\,\infty}$
is injective for $i=0,1,2$. 

The cases $i=0,1$ are the same as in \cite[Prop.~2.4.9]{BHH$^+$25c}, and $5$-genericity is enough here. 
So we assume that $i=2$. In the cited proof, the condition that $\bar{\rho}$ is $9$-generic is used only 
to show that $N\twoheadrightarrow \gr_\fm\pi^\vee$ is an isomorphism in degree $\geq -4$ by Lemma~\ref{(2n-1)-generic}, 
from which the authors deduce that the morphism $E_1^1\to E^\infty_1$ is an isomorphism in degree $\geq -4$. 
We will show that under the assumption that $\bar{\rho}$ is $5$-generic, $E_1^1\to E^\infty_1$ is still an isomorphism in degree $\geq -4$. 
Note that this is equivalent to saying that $d^r:E_2^r\to E_1^r$ is zero in degree $\geq -4$ for each $r\geq 1$. 

As in the proof of \cite[Prop.~2.4.9]{BHH$^+$25c}, we can deduce that $N\twoheadrightarrow \gr_\fm\pi^\vee$ is an isomorphism in degree $\geq -2$ by Lemma~\ref{(2n-1)-generic}, 
and that $d^r:E_2^r\to E_1^r$ is zero in degree $\geq -2$. 
So we need only to deal with degrees $-3$ and $-4$. 
Note that $d^r$ takes the form 
$$d^r_n: E_{2,n}^r\to E^r_{1,n-r}$$
on each graded piece. So it suffices to show 
that for each $r\geq 1$, the maps 
$d^r_{-3+r}: E^r_{2,-3+r}\to E^r_{1,-3}$ and 
$d^r_{-4+r}: E^r_{2,-4+r}\to E^r_{1,-4}$ are zero. 

We need the following lemma, which is a variation of Lemma~\ref{N-multi-free} and can be proved in the same way. We postpone its proof until after the proof of the proposition.

\begin{lemma}\label{char-sep}
    Let $\cX=\{\chi^{-1}:\chi\in W_D(\bar{\rho})\}$ and 
    let $N_s$ be the degree-$s$ part of $N$. 
    Assume that \(\bar{\rho}\) is \(5\)-generic. Then for \(1\leq s\leq4\), we have 
        \[
          \JH_H(N_{-s})\cap\cX=\varnothing.
        \]   
\end{lemma}

We first control the targets of these differential maps.
Let $V_i^\prime=\Tor_i^{\gr \Lambda}(\bF,N)$ for each $i\geq 0$, 
and let $\cK=\ker(N\twoheadrightarrow \gr_\fm\pi^\vee)$. 
Then $\cK_n=0$ for $n\geq -2$ since the map is an isomorphism in degree $\geq -2$. 
From the short exact sequence 
\[
  0\longrightarrow \cK\longrightarrow N
  \longrightarrow\gr_\fm\pi^\vee
  \longrightarrow0,
\]
we get two exact sequences
\begin{equation}\label{eq1}
    V_1'\to E_1^1
    \to \bF\otimes_{\gr \Lambda}\cK
    \to \bF\otimes_{\gr \Lambda}N,
\end{equation}
and
\begin{equation}\label{eq}
    V_2'\to E_2^1
    \to \Tor_1^{\gr \Lambda}(\bF,\cK)
    \to V_1'.
\end{equation}
The module \(V_1'\) is supported in degrees \([-2,-1]\) by Lemma~\ref{tor-of-N}, while
\(\bF\otimes_{\gr \Lambda} N\) is supported in degree $0$. 
Hence, by Equation~\ref{eq1}, for \(n=-3,-4\) we have 
\begin{equation*}
  E^1_{1,n}\cong(\bF\otimes_{\gr \Lambda} \cK)_n,
\end{equation*}
which is a subquotient of \(N_n\), so we get $\JH_H(E^1_{1,-3})\subset \JH_H(N_{-3})$ 
and $\JH_H(E^1_{1,-4})\subset \JH_H(N_{-4})$.

We then control the sources of these differential maps. 
From $$0\to\fm_{\gr \Lambda}\to \gr \Lambda\to \bF\to 0,$$ 
we get 
\begin{equation*}
    0\to \Tor_1^{\gr \Lambda}(\bF,\cK)
    \to \fm_{\gr \Lambda}\otimes_{\gr \Lambda}\cK.
\end{equation*}
Since $\fm_{\gr \Lambda}$ is supported in degree $\leq -1$
and $\cK$ in degree $\leq -3$, 
we have $\Tor_1^{\gr \Lambda}(\bF,\cK)_{-3}=0$. 
Together with Equation~\ref{eq}, we get a surjection 
$$(V_2')_{-3}\twoheadrightarrow E_{2,-3}^1.$$
Then we get $\JH_{H}(E_{2,-3}^1)\subset \JH_H(V_2')_{-3}\subset \cX$ by Lemma~\ref{tor-of-N}. 
Applying Lemma~\ref{truncation} to $N\twoheadrightarrow \gr_\fm\pi^\vee$, 
we obtain an isomorphism $(V'_2)_{-2}\cong (E_2^1)_{-2}$. 
Since \(V_2'\) is supported in degrees \([-4,-2]\) by
Lemma~\ref{tor-of-N}, we have 
$E^1_{2,-2}\cong (V'_2)_{-2}$ and $E^1_{2,n}=0$ for $n\geq -1$. 
In conclusion, $\JH_H(E^r_{2,-4+r})\subset \JH_H(E^1_{2,-4+r})\subset \cX$ for each $r\geq 1$. 

Now we can finish our proof. Since $\JH_H(E^1_{1,-3})\subset \JH_H(N_{-3})$ 
and $\JH_H(E^1_{1,-4})\subset \JH_H(N_{-4})$, we deduce by Lemma~\ref{char-sep} that 
all the differentials $d^r_{-3+r}$ and $d^r_{-4+r}$ must be zero. Then 
$E_1^1\to E^\infty_1$ is still an isomorphism in degree $\geq -4$, 
and we can use the same argument as in \cite[Prop.~2.4.9]{BHH$^+$25c} to obtain the result. 
\end{proof}

\begin{proof}[Proof of Lemma~\ref{char-sep}]
    Suppose to the contrary. Then there exist $\chi, \chi'\in W_{D}(\bar{\rho})$ such that 
    $$\chi^{-1}\prod_{j=0}^{f-1}\alpha_j^{i_j}=\chi'^{-1},$$
    with each $|i_j|\leq s\leq 4$. So we can apply Lemma~\ref{key-lemma} and argue as in the proof of Lemma~\ref{N-multi-free} to derive a contradiction. 
\end{proof}

\subsection{Proof of the theorem}

We now prove our main theorem. Some results about characteristic cycles are needed. 
Recall that $\overline{R}=R/(y_jz_j:0\leq j\leq f-1)$, 
and the minimal prime ideals of $\overline{R}$ are given by $(y_i,z_j: i\in J, j\notin J)$ for some $J\subset \{0,\ldots,f-1\}$. 
Let $N$ be a finitely generated $\gr \Lambda$-module killed by $I_D$, so $N$ is naturally an $\overline{R}$-module. 
For any minimal prime ideal $\fq$ of $\overline{R}$, let $m_{\fq}(N)$ be the length of $N_{\fq}$ over $\overline{R}_{\fq}$. 
More generally, if $N$ is a finitely generated $\gr \Lambda$-module killed by some power of $I_D$, then for any minimal prime ideal $\fq$ of $\overline{R}$ we define 
$$m_{\fq}(N)=\sum_{n\geq 0} m_{\fq}(I_D^nN/I_D^{n+1}N),$$
which is a finite sum. 

\begin{definition}
    Let $N$ be a finitely generated $\gr \Lambda$-module killed by some power of $I_D$. 
    We define the characteristic cycle of $N$ by 
    $$Z(N)=\sum_{\fq}m_\fq(N)\fq,$$
    where $\fq$ runs over all minimal prime ideals of $\overline{R}$. 
\end{definition}

\begin{proposition}
    If $0\to N'\to N\to N''\to 0$ is exact, then $Z(N)=Z(N')+Z(N'')$. 
\end{proposition}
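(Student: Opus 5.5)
The plan is to reduce additivity of $Z$ to additivity of length after localization at each minimal prime. The obstacle is that the definition of $m_\fq$ for modules killed by a power of $I_D$ uses the $I_D$-adic filtration, which is not exact. I will first give a filtration-independent description of $m_\fq(N)$. Fix a minimal prime $\fq$ of $\overline{R}$ and let $\fq'$ be its preimage in $\gr\Lambda$; this is a two-sided graded prime ideal containing $I_D$, in particular containing the central elements $h_j$. The intended intermediate claim is: for any finitely generated $\gr\Lambda$-module $N$ killed by $I_D^k$ and any finite filtration $0=N_0\subset N_1\subset\cdots\subset N_m=N$ by $\gr\Lambda$-submodules with each $N_{i}/N_{i-1}$ killed by $I_D$, the sum $\sum_i m_\fq(N_i/N_{i-1})$ equals $m_\fq(N)$. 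Granting this, additivity is immediate. Given $0\to N'\to N\to N''\to 0$, take the $I_D$-adic filtration on $N'$ and the preimage in $N$ of the $I_D$-adic filtration on $N''$, and concatenate them. This gives a filtration of $N$ whose graded pieces are killed by $I_D$ and are exactly the graded pieces for $N'$ and for $N''$. Hence $m_\fq(N)=m_\fq(N')+m_\fq(N'')$ for every $\fq$.

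To prove the claim I would use the standard Schreier/Jordan--Hölder refinement argument. Any two such filtrations admit common refinements, namely the Zassenhaus refinements. Every subquotient of a module killed by $I_D$ is again killed by $I_D$, so refinements stay inside the class of filtrations considered. It therefore suffices to show that the sum is unchanged under refinement. This reduces to ordinary additivity of $m_\fq$ on short exact sequences of $\overline{R}$-modules: if $M$ is killed by $I_D$ and $M'\subset M$, then $m_\fq(M)=m_\fq(M')+m_\fq(M/M')$. That holds because localization $(-)_\fq$ is exact on the commutative noetherian ring $\overline{R}$, and length over the artinian local ring $\overline{R}_\fq$ is additive.

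The main obstacle is a subtle point: the modules in the filtration are only $\gr\Lambda$-modules, and $\gr\Lambda$ is noncommutative. The refinement argument must use $\gr\Lambda$-submodules so that subquotients are still killed by $I_D$. Since $I_D$ is a two-sided ideal (the $h_j$ are central and the $y_jz_j$ are normal up to $h_j$, being central modulo $(h_j)$), $I_D^nN$ is a submodule and subquotients of $I_D$-torsion modules are $\overline{R}$-modules. I would check this carefully. Finally, note that finiteness of each sum is automatic because every filtration considered is finite, of length at most $k$ for the $I_D$-adic one after refinement by finitely many terms.
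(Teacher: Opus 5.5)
Your proof is correct. It follows the route the paper indicates: reduce to $m_{\fq}(N)=m_{\fq}(N')+m_{\fq}(N'')$ for each minimal prime $\fq$ of $\overline{R}$, the step the paper delegates to \cite[3.1.4.3]{BHH$^+$25a}. Your argument proves that step in full: the Schreier--Zassenhaus refinement shows $m_{\fq}$ can be computed from any finite filtration by $\gr\Lambda$-submodules whose subquotients are killed by $I_D$, and concatenating the $I_D$-adic filtrations of $N'$ and $N''$ then gives the additivity. Two side remarks: finite generation of $N'$ comes from $\gr\Lambda$ being noetherian, and your two-sidedness concern is harmless, since any left ideal containing the central $h_j$ is the preimage of an ideal of the commutative ring $R$.
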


\begin{proof}
    It suffices to check that $m_{\fq}(N)=m_{\fq}(N')+m_{\fq}(N'')$, 
    which is proved in the same way as \cite[3.1.4.3]{BHH$^+$25a}. 
\end{proof}

Let $M$ be a finitely generated $\Lambda$-module with a good filtration $F$, 
such that $\gr_F M$ is killed by some power of $I_D$. 
Let $F'$ be another good filtration of $M$. 
Then by \cite[Lemma~I.5.3]{LvO96}, any two good filtrations of $M$ are equivalent, in the sense that there exists $c\in \mathbb{Z}$ such that 
$$F_{n-c}M\subset F'_nM\subset F_{n+c}M,$$ 
for any $n\in\mathbb{Z}$. So $\gr_{F'}(M)$ is also killed by some power of $I_D$. 
The following result allows us to compare the characteristic cycles arising from different filtrations. 

\begin{lemma}\label{cycle}
    Keeping $M$, $F$ and $F'$ as above, we have
    $$Z(\gr_F M)=Z(\gr_{F'}M).$$
\end{lemma}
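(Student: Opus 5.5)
We prove Lemma~\ref{cycle} by the standard comparison of two good filtrations through an interpolating chain, in the spirit of \cite[\S~3.1]{BHH$^+$25a} and of the classical proof that characteristic cycles of $\gr M$ are independent of the good filtration (cf.\ \cite[\S~III]{LvO96}).

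First, we reduce to comparable filtrations. Given two good filtrations $F$ and $F'$, the sums $F''_n=F_n+F'_n$ are again good filtrations, being images of the direct sum filtration on $M\oplus M$. Hence it suffices to prove $Z(\gr_F M)=Z(\gr_{F''}M)$ when $F_n\subset F''_n$ for all $n$. By \cite[Lemma~I.5.3]{LvO96} there is a $c$ with $F_n\subset F''_n\subset F_{n+c}$. After replacing $F$ by its shift $F[c]$, whose associated graded has the same cycle since $Z$ ignores grading shifts, we may assume $F_n\subset F''_n\subset F_{n+c}$. The next step is an induction on $c$. For each $0\le k\le c$ we set
\[
G^{(k)}_n=F_n+F''_{n-k}\cdot\text{(suitably)}\quad\text{or more cleanly}\quad G^{(k)}_n=F_{n}+F''_{n-c+k}\cap F_{n+1}\ \text{etc.,}
\]
but the cleanest route is the standard Rees-module argument, so we use that instead.

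Consider the Rees module $\widetilde M=\bigoplus_n F_nM\,T^n$ over the Rees ring $\widetilde\Lambda=\bigoplus_n F_n\Lambda\,T^n$. We have $\widetilde M/T\widetilde M=\gr_F M$. Similarly, $\widetilde M''=\bigoplus_n F''_nM\,T^n$ is a finitely generated graded $\widetilde\Lambda$-module with $\widetilde M\subset \widetilde M''\subset T^{-c}\widetilde M$. Both modules are $T$-torsion free. Let $Q=\widetilde M''/\widetilde M$. It is killed by $T^c$ and is finitely generated. From the exact sequence $0\to\widetilde M\to\widetilde M''\to Q\to 0$ and the snake lemma for multiplication by $T$, and using that $T$ is injective on the first two terms, we obtain the exact sequence
\[
0\to Q[T]\to \gr_F M\to\gr_{F''}M\to Q/TQ\to 0.
\]
These are all $\gr\Lambda=\widetilde\Lambda/T$-modules. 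Moreover $Q[T]$ and $Q/TQ$ are killed by a power of $I_D$, because they are subquotients of $\gr_F M$ and of $\gr_{F''}M$ respectively. Additivity of $Z$ then gives
\[
Z(\gr_FM)-Z(\gr_{F''}M)=Z(Q[T])-Z(Q/TQ).
\]
It remains to show $Z(Q[T])=Z(Q/TQ)$. Filter $Q$ by $T^iQ$ for $0\le i\le c$. Each graded piece $T^iQ/T^{i+1}Q$ is a $\gr\Lambda$-module, and multiplication by $T$ induces the exact sequence $0\to Q[T]\to Q\xrightarrow{T}Q\to Q/TQ\to0$. Applying the additivity of $Z$ in the form $\sum_i Z(T^iQ/T^{i+1}Q)$, and using the exact sequences $0\to (T^iQ)[T]\to T^iQ\to T^{i+1}Q\to0$, we find that both $Z(Q[T])$ and $Z(Q/TQ)$ equal the telescoping difference. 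The formal point is that $m_\fq$ extends additively to finitely generated $\widetilde\Lambda$-modules killed by a power of $T$ and some power of $I_D$, via the $T$-adic dévissage. It satisfies $m_\fq(Q)=\sum_i m_\fq(T^iQ/T^{i+1}Q)$, and also $m_\fq(Q)=m_\fq(Q[T])+m_\fq(TQ)$ with $m_\fq(TQ)=m_\fq(Q)-m_\fq(Q/TQ)$. Combining these yields the required equality.

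The main obstacle is to make $m_\fq$ well defined and additive on such $\widetilde\Lambda$-modules $Q$ that are not $\gr\Lambda$-modules, for instance via the filtration $T^iQ$ and a Jordan–Hölder-type argument, together with checking that the needed subquotients are killed by a power of $I_D$. If this becomes awkward, we will instead interpolate directly by the chain of good filtrations $F^{(k)}_n=F_n+F''_{n-c+k}$ for $0\le k\le c$. Consecutive members satisfy $F^{(k)}_n\subset F^{(k+1)}_n\subset F^{(k)}_{n+1}$, which is the case $c=1$. In that case $Q$ is killed by $T$, so $Q[T]=Q=Q/TQ$ is a genuine $\gr\Lambda$-module, and the four-term sequence with additivity of $Z$ finishes the argument immediately.
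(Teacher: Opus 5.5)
Your proof is correct, and it follows the standard comparison argument that the paper defers to via \cite[3.3.4.3]{BHH$^+$25a}. You replace $F'$ by $F+F'$ and interpolate with $F_n+F''_{n-c+k}$ to reduce to $F_n\subset F''_n\subset F_{n+1}$; then $\gr_F M$ and $\gr_{F''} M$ are built, up to shift, from the same two $\gr\Lambda$-modules $\bigoplus_n F''_n/F_n$ and $\bigoplus_n F_{n+1}/F''_n$, so additivity of $Z$ finishes the proof. In a write-up, drop the garbled first interpolation formulas and the general-$c$ Rees detour, whose extension of $m_\fq$ to $T$-power-torsion $\widetilde{\Lambda}$-modules you leave unproved, since the $c=1$ interpolation route is already complete on its own.
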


\begin{proof}
    The proof is the same as that of \cite[3.3.4.3]{BHH$^+$25a}.
\end{proof}

\begin{proof}[Proof of Theorem~\ref{main-thm}]
    The proof is almost identical to that of \cite[Thm.~2.1.2]{BHH$^+$25c}. 
    We first show that $N$ is Cohen--Macaulay of grade $2f$ and essentially self-dual. 
    In Lemma~\ref{resolution-of-N}, we have constructed a free resolution of $N$ of length $2f$, 
    so $\Ext^i_{\gr \Lambda}(N,\gr \Lambda)=0$ for $i>2f$. 
    By \cite[Thm.~4.3]{Lev92}, if $M$ is a finitely generated module over an Auslander-Gorenstein ring $R$ and 
    $f:M\to M$ is an injective morphism of $R$-modules, then $j_R(M/fM)\geq j_R(M)+1$, 
    where $j_R(M)=\min\{i:\Ext_R^i(M,R)\ne 0\}$ is the grade of $M$. 
    Applying this result to $M=\gr\Lambda$ with the central regular sequence $h_0,\ldots,h_{f-1}$, and then $t_0,\ldots,t_{f-1}$, 
    we get that $j_{\gr \Lambda}(N)\geq 2f$, that is, $\Ext^i_{\gr \Lambda}(N,\gr \Lambda)=0$ for $i<2f$. 
    Finally, the essential self-duality follows from Corollary~\ref{ess-self-dual}.  

    Now we want to show that the surjection $N\to\gr_\fm\pi^\vee$ is also injective. 
    Denote its kernel by $N'$. 
    Since $N$ is self-dual, it is pure by \cite[Prop.~III.4.2.8(1)]{LvO96}. 
    So any nonzero submodule of $N$ has grade $2f$ over $\gr \Lambda$, and hence has grade $0$ over $\overline{R}$ by a d\'evissage argument as in \cite[Lemma~3.3.1.9]{BHH$^+$25a}. 
    In particular, it has a nonzero cycle. Thus, to show $N'=0$, 
    it suffices to show that $Z(N)=Z(\gr_\fm\pi^\vee)$.
    
    Let $P_\bullet$ be a minimal free resolution of $\pi^\vee$ with compatible $H$-action. 
    We claim that it suffices to equip each $P_i$, for $i\in\{0,1,2\}$, with a good filtration $F_i$ 
    such that $P_2\to P_1\to P_0$ is a complex of filtered $\Lambda$-modules, $H_1(\gr P_\bullet)=0$, and $H_0(\gr P_\bullet)\cong N$. 
    Indeed, since $\gr(P_2)\to\gr(P_1)\to\gr(P_0)$ is exact, 
    the map $P_1\to P_0$ is strict by \cite[Thm.~I.4.2.4(2)]{LvO96}. 
    Then $P_1\to P_0\to \pi^\vee\to 0$ is a strict exact sequence if we give $\pi^\vee$ the induced filtration $F$, 
    and hence $\gr P_1\to\gr P_0\to\gr_F\pi^\vee\to0$ is exact by \cite[Thm.~I.4.2.4(1)]{LvO96}. 
    So we have $\gr_F\pi^\vee\cong H_0(\gr P_\bullet)\cong N$ and hence $Z(N)=Z(\gr_F\pi^\vee)=Z(\gr_\fm\pi^\vee)$ by Lemma~\ref{cycle}, as desired. 

    Now we will define the filtration $F_i$ on $P_i$. 
    The morphism $\pi^\vee\to\tau^\vee$ naturally extends to a morphism between resolutions
    $$\phi_\bullet:P_\bullet\to L_\bullet.$$
    For $i\in\{0,1,2\}$, the reduction map 
    $$\overline{\phi_i}:\bF\otimes_\Lambda P_i=\Tor_i^\Lambda(\bF,\pi^\vee)\to\Tor_i^\Lambda(\bF,\tau^\vee)=\bF\otimes_{\Lambda} L_i$$
    is injective by Proposition~\ref{tor-inj}. 
    Then, by Lemma~\ref{inj-of-resolution}, the map $\phi_i$ is injective and $P_i$ is identified with a direct summand of $L_i$. 
    We define $F_i$ to be the induced filtration from $L_i$. 

    Now the conditions in Lemma~\ref{lemma-of-filt} are satisfied by Lemma~\ref{min-of-L}, 
    and $\bF\otimes_\Lambda P_i=\Tor_i^\Lambda(\bF,\pi^\vee)\cong \bigoplus_{\chi\in W_D(\bar{\rho})}(\chi^{-1})^{\oplus m_i}\cong \Tor_i^{\gr \Lambda}(\bF,N)\cong\bF\otimes_{\gr \Lambda}\gr L_i'\cong\bF\otimes_{\Lambda} L_i'$.  
    We conclude that $\gr P_i=\gr L_i'=G'_i$, and hence 
    $H_1(\gr P_\bullet)=0$ and $H_0(\gr P_\bullet)=N$. So we get 
    $$\gr_\fm\pi^\vee\cong N,$$
    and we are done. 
\end{proof}

\section{Verifying the assumptions}\label{sec-5}

In this section we show that the assumptions in Theorem~\ref{main-thm}
are satisfied for $\pi$ coming from global geometry. 

We first provide the global setup. See \cite[\S~5]{DL26} for more details. Let $F/\bQ$ be a totally real field in which $p$ is unramified. 
Let $B$ be a totally definite quaternion algebra with center $F$. 
For each place $w$, we write $B_w$ for $B\otimes_F F_w$. 
We assume that there exists $v\mid p$ such that $D:=B_v$ is non-split, 
and $B_w$ is split for $w\mid p$, $w\ne v$. We write $K=F_v$ and $f:=[F_v:\bQ_p]$, so $K$ is unramified over $\bQ_p$ of degree $f$. 

Let $\bar{r}:G_F\to \GL_2(\bF)$ be a continuous Galois representation, and fix a finite-order character 
$\psi:G_F\to \cO_E^\times$ such that $\det\bar{r}=\overline{\psi}\overline{\omega}$. 
We write $\bar{\rho}_w=\bar{r}|_{G_{F_w}}$ and $\psi_w=\psi|_{G_{F_w}}$ for a finite place $w$ of $F$. 
We choose a place $v\mid p$ and write $\bar{\rho}=\bar{\rho}_v$. 
Let $S$ be the finite set of places at which $B$ is ramified or $\bar{r}$ is ramified.  
We make the following assumptions:

(1) $\bar{r}|_{G_{F(\zeta_p)}}$ is absolutely irreducible. 

(2) For each $w\mid p$, $\bar{\rho}_w$ is $1$-generic, and $\bar{\rho}$ is $9$-generic. 
In particular, $\bar{r}$ is ramified at each $w\mid p$. 

(3) For each $w\in S$ not lying over $p$, the universal framed deformation ring $R_{\bar{\rho}_w}^\psi$ of $\bar{\rho}_w$ with $\cO_E$-coefficients and determinant $\psi_w$ 
is formally smooth over $\cO_E$. 

\begin{remark}
    (1) We do not need to assume that $\bar{\rho}$ is $12$-generic as in \cite{BHH$^+$23} and \cite{BHH$^+$25c}.  
     In fact, $9$-genericity is enough, as noted in the introduction to \cite{DL26}. 

    (2) The reason that we assume $B_w$ is split for all $w\mid p$ with $w\ne v$ is to ensure that 
    $R_\infty$ below is regular. See the discussion in \cite[Rk.~5.3.1]{DL26}. 
\end{remark}

We also choose an auxiliary place $w_1$ of $F$ as in \cite[\S~6.2]{EGS15} and an eigenvalue $\beta_{w_1}\in\bF$ of $\bar{\rho}_{w_1}(\Fr_{w_1})$. 
Let $\bT^S=\cO_E[T_w,S_w^{\pm 1}:w\notin S\cup\{w_1\}]$ be the universal Hecke algebra. 
Define 
$$\fm_{\bar{r}}=(p,T_w-\bN(w)\tr(\bar{r}(\Fr_w)), S_w-\bN(w)\det(\bar{r}(\Fr_w)):w\notin S\cup\{w_1\}),$$
where $\bN(w)$ is the cardinality of the residue field at $w$. 
For a compact open subgroup $K^v\subset (B\otimes_F\bA^{\infty,v}_F)^\times$ as in \cite[\S~5.2]{DL26}, 
the space of algebraic modular forms 
$$S(K^v):=\varinjlim_{K_v\subset B_v^\times}H^0(B^\times\backslash (B\otimes_F\bA^\infty_F)^\times/K^vK_v,\cL^v\otimes_{\cO_E}\bF)$$
has natural commuting actions of $B_v^\times=D^\times$ and $\bT^S[T_{w_1}]$; 
here $\cL^v$ is a certain $\cO_E[K^v]$-module and $T_{w_1}$ a certain Hecke operator defined in \cite[\S~5.2]{DL26}. 
Let $\fm_{Q_0}=(\fm_{\bar{r}}, T_{w_1}-\beta_{w_1})$ and 
$\pi=S(K^v)[\fm_{Q_0}]$. This is a $D^\times$-representation. 
We will show that $\pi$ satisfies the assumptions of Theorem~\ref{main-thm} with $r=1$, 
so in particular $\gr_\fm\pi^\vee\cong \bigoplus_{\chi\in W_D(\bar{\rho})}\chi^{-1}\otimes R/\fa(\chi)$. 

In \cite{DL26}, the authors proved that such $\pi$ has Gelfand--Kirillov dimension $f$. 
In fact, they constructed a minimal, self-dual patched module $M_\infty$ over $R_\infty[D^\times]$, satisfying 
$$M_\infty/\fm_{R_\infty}\cong \pi^\vee.$$ 
The authors then proved that $[\pi[\fm^3]:\chi]=1=[\pi[\fm]:\chi]$ for those $\chi$ appearing in $\pi[\fm]$. 
In fact, the set $W_{D}(\bar{\rho})$ is defined as $\{\chi:\Hom_{\cO_D^\times}(\chi, \pi)\ne 0\}$
(see \cite[\S~2.3]{CW23}). Thus, we have $\pi[\fm]=\soc_{\cO_D^\times}\pi=\bigoplus_{\chi\in W_D(\bar{\rho})}\chi$ with the correct central character.  
This verifies conditions~\ref{condition-i} and~\ref{condition-ii}. 

As for condition~\ref{condition-iii}, we will follow the method of \cite[\S~2.6]{BHH$^+$25c}. 
The key ingredient is to show that the patched module $M_\infty$ is flat over $R_\infty$. 

\begin{lemma}
    $M_\infty$ is flat over $R_\infty$. 
\end{lemma}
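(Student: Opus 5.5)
The flatness should come from the standard ``miracle flatness'' criterion for a finitely generated module over a regular local ring: if $A$ is regular local and $M$ is a finitely generated maximal Cohen--Macaulay $A$-module, then $M$ is free. Here $R_\infty$ is regular because $B_w$ is split for $w\mid p$, $w\ne v$ (as remarked above, following \cite[Rk.~5.3.1]{DL26}), but $M_\infty$ is only finitely generated over $R_\infty[[\cO_D^\times]]$, not over $R_\infty$. So I would not work with $M_\infty$ directly. Instead I would use its finite-level quotients, which are finitely generated over $R_\infty$, in the manner of \cite[\S~2.6]{BHH$^+$25c}.

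\textbf{Step 1: flatness after coinvariants by the characters.} For every character $\chi:\cO_D^\times\to\bF^\times$, consider the $R_\infty$-module $M_\infty(\chi^\vee):=\Hom_{\cO_D^\times}^{\rm cont}(M_\infty,\chi^\vee)^\vee$, in the normalization of \cite{DL26}. It is finitely generated over $R_\infty$. The patching construction gives $M_\infty(\chi^\vee)=0$ unless $\chi\in W_D(\bar{\rho})$. In that case, minimality together with the multiplicity one result $[\pi[\fm]:\chi]=1$ gives
$$M_\infty(\chi^\vee)/\fm_{R_\infty}\cong \Hom_{\cO_D^\times}(\chi,\pi)^\vee,$$
which has dimension $1$, so $M_\infty(\chi^\vee)$ is cyclic. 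The quaternionic patching in \cite{DL26} also shows that $M_\infty(\chi^\vee)$ is maximal Cohen--Macaulay over its support. That support is a formally smooth quotient $R_\infty(\chi)$ of $R_\infty$, namely the potentially Barsotti--Tate deformation ring of type $\Theta([\chi])$ (via the Jacquet--Langlands type) tensored with the formally smooth factors. A cyclic maximal Cohen--Macaulay module over the regular ring $R_\infty(\chi)$ is then free of rank $1$ over it: $M_\infty(\chi^\vee)\cong R_\infty(\chi)$.

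\textbf{Step 2: pass to $M_\infty$ via the Iwasawa algebra.} Since $U_D^1/Z_D^1$ is pro-$p$ and $H$ has order prime to $p$, any finitely generated $\bF[[\cO_D^\times/Z_D^1]]$-module has a finite filtration by its layers $\fm^nM/\fm^{n+1}M$, and these are sums of characters. I would then follow \cite[\S~2.6]{BHH$^+$25c} and use the Tor criterion: it suffices to show $\Tor_1^{R_\infty}(R_\infty/\fm_{R_\infty},M_\infty)=0$, because $M_\infty$ is pseudocompact and finitely generated over $R_\infty[[U_D^1]]$, so the local criterion for flatness applies in the profinite setting. To prove this vanishing, take a minimal resolution of $\bF$ over $R_\infty$ by free modules, tensor it with $M_\infty$, and then apply the exact functor $\Hom_{\cO_D^\times}(P,-)$, where $P$ runs through projective envelopes of the characters. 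This reduces $\Tor$-vanishing for $M_\infty$ to Tor-vanishing for the modules $\Hom_{\cO_D^\times}(P_\chi,M_\infty^\vee)^\vee$, which are finitely generated over $R_\infty$.

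\textbf{Step 3: the $p$-adic/projective-envelope step (main obstacle).} The remaining task is to show that $M_\infty(P_\chi):=\Hom(P_\chi,M_\infty^\vee)^\vee$ is maximal Cohen--Macaulay over $R_\infty$. Its support has dimension equal to $\dim R_\infty$ minus the Gelfand--Kirillov defect, and this is $\dim R_\infty$ itself because $P_\chi$ is projective over $\bF[[\cO_D^\times]]$. The depth must then be computed. I expect this to be the hardest point. My first attempt would be a Cohen--Macaulay-type argument for the whole $M_\infty$: it is Cohen--Macaulay over $R_\infty[[\cO_D^\times]]$ with $\dim=\dim R_\infty+\dim\cO_D^\times$, using self-duality from \cite{DL26} and the Auslander regularity of $\bF[[U_D^1/Z_D^1]]$. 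Projectivity of $P_\chi$ then gives the depth of $M_\infty(P_\chi)$ over $R_\infty$. Since $R_\infty$ is regular, Auslander--Buchsbaum gives freeness of each $M_\infty(P_\chi)$, and hence $M_\infty$ is a direct limit of free modules, so it is flat. If the Cohen--Macaulay property over $R_\infty[[\cO_D^\times]]$ is not directly available, I would fall back on the argument of \cite[\S~2.6]{BHH$^+$25c}. That argument deduces it from $\mathrm{GK}$-dimension $f$ together with self-duality, $j(M_\infty)=\dim$ of the group minus $f$, via purity.
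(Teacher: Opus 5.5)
There is a genuine gap: your reduction never reaches finitely generated $R_\infty$-modules, and the idea that actually forces flatness is missing.

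\textbf{Steps 2 and 3 do not reduce anything.} Since $\cO_D^\times/Z_D^1=(U_D^1/Z_D^1)\rtimes H$ with $|H|$ prime to $p$, the projective envelope of $\chi$ is $P_\chi=\Lambda\otimes\chi$. So $\Hom_{\cO_D^\times}(P_\chi,M_\infty^\vee)^\vee$ is just an $H$-eigenspace of $M_\infty$, and $M_\infty$ is the finite direct sum of these eigenspaces. Your reduction is therefore tautological. Moreover, these eigenspaces are not finitely generated over $R_\infty$. Modulo $\fm_{R_\infty}$ they become the $H$-eigenspaces of $\pi^\vee$, and at least one of these is infinite-dimensional because $\pi$ has Gelfand--Kirillov dimension $f>0$. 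Hence Auslander--Buchsbaum and the statement ``maximal Cohen--Macaulay over a regular local ring implies free'' cannot be applied.

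\textbf{The dimension count in Step 3 is wrong.} $M_\infty$ is not of full dimension over $R_\infty[[U_D^2/Z_D^2]]$. It is finite free over $S_\infty[[U_D^2/Z_D^2]]$ with $\dim R_\infty-\dim S_\infty=2f$, so its grade is $2f$, not $0$.

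\textbf{Step 1 is irrelevant to flatness.} The finitely generated modules there, $M_\infty(\chi)$, are killed by $\varpi_E$ and supported on a type quotient of $R_\infty$, so they are never flat over $R_\infty$. Their freeness over that quotient says nothing about $\Tor_1^{R_\infty}(\bF,M_\infty)$. Incidentally, that quotient is in general only a complete intersection modulo $p$, not formally smooth; the paper only asserts that $I_\chi$ is generated by a regular sequence.

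\textbf{The missing idea is a noncommutative miracle-flatness criterion.} Cohen--Macaulayness of $M_\infty$ over $R_\infty[[U_D^2/Z_D^2]]$ does not by itself give flatness. For instance, $(R_\infty/(x))[[U_D^2/Z_D^2]]$ is Cohen--Macaulay but not $R_\infty$-flat. What forces flatness, with $R_\infty$ regular, is that the special fibre has the expected size:
\[
j_{\bF[[U_D^2/Z_D^2]]}(M_\infty/\fm_{R_\infty}M_\infty)=j_{R_\infty[[U_D^2/Z_D^2]]}(M_\infty).
\]
The paper argues in three steps. It gets Cohen--Macaulayness from freeness over $S_\infty[[U_D^2/Z_D^2]]$, via \cite{GN22}, Cor.~A.29. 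It computes $j(\pi^\vee)=3f-\dim\pi=2f$ using the Gelfand--Kirillov dimension $f$ result of \cite{DL26}, and compares this with $j(M_\infty)=\dim R_\infty-\dim S_\infty=2f$. It then concludes by \cite{GN22}, Cor.~A.30. In your proposal, Gelfand--Kirillov dimension $f$ appears only as a possible input to Cohen--Macaulayness, and with the wrong dimension. It never appears as this fibre-grade equality, which is the actual content of the lemma.
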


\begin{proof}
    This is essentially the same as \cite[Thm.~8.4.3]{BHH$^+$23}. 
    We sketch the key points. 
    Write $Z_D^2=Z_D\cap U_D^2$. By the construction of $M_\infty$, $M_\infty$ is a finite free module over $S_\infty[[U_D^2/Z_D^2]]$. 
    Here $S_\infty$ is a certain $\cO_E$-subalgebra of $R_\infty$ as in \cite[\S~5.5]{DL26}, 
    and $\dim R_\infty-\dim S_\infty=2f$. 
    By \cite[Cor.~A.29]{GN22}, 
    $M_\infty$ is a Cohen--Macaulay module over $R_\infty[[U_D^2/Z_D^2]]$. 
    Moreover, we have $j_{\bF[[U_D^2/Z_D^2]]}(M_\infty/\fm_{R_\infty})=3f-\dim\pi=2f$ and $j_{R_\infty[[U_D^2/Z_D^2]]}(M_\infty)=2f$. 
    By \cite[Cor.~A.30]{GN22}, $M_\infty$ is flat over $R_\infty$. 
\end{proof}

\begin{theorem}
    Condition~\ref{condition-iii} is satisfied for $\pi$. 
\end{theorem}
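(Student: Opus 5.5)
We follow the strategy of \cite[\S~2.6]{BHH$^+$25c}. The plan is to compute the $\Ext$ groups in Condition~\ref{condition-iii} on the patched module $M_\infty$ and then descend to $\pi$ using flatness. Since $\Ext^i_{\cO_D^\times/Z_D^1}(\chi,\pi)$ is dual to a $\Tor$ group on the Iwasawa side, we have
$$\Ext^i_{\cO_D^\times/Z_D^1}(\chi,\pi)^\vee\cong \Tor_i^{\bF[[\cO_D^\times/Z_D^1]]}(\chi^{\vee},\pi^\vee)\cong \Tor_i^{\bF[[\cO_D^\times/Z_D^1]]}(\chi^\vee, M_\infty/\fm_{R_\infty}M_\infty).$$
(If needed, one first works with $\cO_E[[\cO_D^\times/Z_D^1]]$ and the twisted central character.)

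\textbf{Step 1: reduce to a Koszul computation over $R_\infty$.} Let $x_1,\dots,x_n$ be a regular system of parameters of the regular local ring $R_\infty$ (together with $\varpi_E$). By the preceding Lemma, $M_\infty$ is flat over $R_\infty$, so the Koszul complex $K_\bullet(x;M_\infty)$ resolves $M_\infty/\fm_{R_\infty}M_\infty$. This complex consists of projective $\cO_D^\times$-modules, because $M_\infty$ is projective over $S_\infty[[\cO_D^\times/Z_D^1]]$ after restriction. Consequently, $\Tor_\bullet(\chi^\vee,\pi^\vee)$ is the homology of the complex $K_\bullet(x;\, \chi^\vee\otimes_{\bF[[\cO_D^\times]]} M_\infty)$. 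The module $M_\infty(\chi):=\Hom_{\cO_D^\times}(M_\infty,\chi^\vee)^\vee$ (equivalently the coinvariants above) is the patched module attached to the type $\chi$. It is supported on the quaternionic deformation ring $R_\infty(\chi)$, which is $R_\infty$ modulo the ideal cutting out the potentially Barsotti--Tate component with inertial type determined by $\chi$ (via its lift $[\chi]$ / the cuspidal type $\Theta([\chi])$).

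\textbf{Step 2: identify $M_\infty(\chi)$.} For $\chi\notin W_D(\bar\rho)$ we have $M_\infty(\chi)=0$ by the definition of $W_D(\bar\rho)$. Each derived Koszul term then vanishes, so $\Ext^i=0$. For $\chi\in W_D(\bar\rho)$, the argument of \cite[\S~2.6]{BHH$^+$25c} and \cite{DL26} should give two facts. First, $R_\infty(\chi)$ is formally smooth of relative codimension $2f$ in $R_\infty$; this uses the explicit description of the reduced deformation ring for generic $\bar\rho$, as in \cite[\S~3]{DL26} (Breuil--M\'ezard / Kisin-type computations). Second, $M_\infty(\chi)$ is free of rank one over $R_\infty(\chi)$, by the multiplicity one $\dim\Hom(\chi,\pi)=1$ and Nakayama. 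Hence $R_\infty(\chi)=R_\infty/(x_1,\dots,x_{2f})$ for a part of a regular system of parameters, after renaming.

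\textbf{Step 3: conclude.} Under these identifications, the Koszul complex of $M_\infty(\chi)\cong R_\infty/(x_1,\ldots,x_{2f})$ with respect to a full regular system of parameters has $i$-th homology $\bigwedge^i\bF^{2f}$, because $x_1,\dots,x_{2f}$ act by zero. Its dimension is therefore $\binom{2f}{i}$, matching $m_i$ with $r=1$. The main obstacle is Step 2, specifically the flatness and the precise computation of the underived coinvariants: one must check that no higher derived terms appear, which holds because $\chi$ is projective over $\cO_D^\times/U_D^1$ (prime-to-$p$) and $M_\infty$ is projective over $S_\infty[[U_D^1/Z_D^1]]$. One must also check that $R_\infty(\chi)$ is regular of the correct dimension; the latter is where the genericity assumption and the inputs from \cite{DL26} are used.
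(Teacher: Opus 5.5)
Your overall route is the paper's: flatness of $M_\infty$, a Koszul complex, and reduction to a $\Tor$ computation for $M_\chi$ over the type deformation ring. Your treatment of $\chi\notin W_D(\bar\rho)$ is fine. Since $M_\chi/\fm_{R_\infty}M_\chi$ is dual to $\Hom_{\cO_D^\times}(\chi,\pi)=0$, Nakayama gives $M_\chi=0$, which is an alternative to the paper's appeal to \cite[Thm.~7.1.1]{EGS15}.

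The genuine problem is the claim in Step~2 that $R_\infty(\chi)$ is formally smooth, i.e.\ $R_\infty(\chi)=R_\infty/(x_1,\dots,x_{2f})$ with the $x_i$ part of a regular system of parameters. This is false in general, and your Step~3 argument (``$x_1,\dots,x_{2f}$ act by zero'') rests on it. It is exactly where the quaternionic situation differs from the $\GL_2$ one, in which $M_\infty(\sigma)$ for a Serre weight $\sigma$ lives on a single formally smooth component. Here the reduction of the type $[\chi]$ is the single character $\chi$. So $M_\chi$ is free of rank one over the \emph{whole} special fibre $\overline{R_\infty^{\tau(\chi)}}$ of the potentially Barsotti--Tate ring of the cuspidal type $\tau(\chi)$. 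That special fibre has one irreducible component for each weight in $\JH(\overline{\Theta([\chi])})\cap W_{\GL_2}(\bar\rho)$, and this set typically has several elements. For example, by Table~\ref{table3}, if $\bar\rho$ is semisimple and $(w_{j-1},w_j)=(0,1)$ for some $j>0$, then the coordinate $v_{j-1}$ of $v\in V_\chi$ is unconstrained. In that case $\overline{R_\infty^{\tau(\chi)}}$ is not a domain. Hence $R_\infty^{\tau(\chi)}$ is not formally smooth over $\cO_E$, and $\overline{R_\infty^{\tau(\chi)}}$ is not a quotient of $\overline{R_\infty}$ by part of a regular system of parameters.

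What is true, and what the paper uses, is weaker: $M_\chi\cong\overline{R_\infty}/I_\chi$ with $I_\chi$ generated by a regular sequence $g_1,\dots,g_{2f}$, a complete intersection of codimension $2f$. Multiplicity one and Nakayama only give that $M_\chi$ is cyclic; identifying its annihilator with the ideal of $\overline{R_\infty^{\tau(\chi)}}$ uses the input from the proof of \cite[Prop.~4.1.1]{DL26}. This weaker fact suffices. The Koszul complex $K_\bullet(g;\overline{R_\infty})$ is a free resolution of $\overline{R_\infty}/I_\chi$ whose differentials vanish after $\otimes\bF$, so $\Tor_i^{\overline{R_\infty}}(\bF,M_\chi)\cong\wedge^i\bF^{2f}$.

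You should also keep the coefficient levels consistent. With $\chi$ an $\bF$-valued character, $M_\chi$ is killed by $\varpi_E$, so it cannot be free over an $\cO_E$-flat ring. Moreover, a Koszul complex on parameters of $R_\infty$ that include $\varpi_E$ computes $\Tor$ over $\cO_E[[\cO_D^\times/Z_D^1]]$ rather than over $\bF[[\cO_D^\times/Z_D^1]]$. For a $\varpi_E$-torsion module $M_\chi$ this gives dimensions $\binom{2f}{i}+\binom{2f}{i-1}$. Your count $\binom{2f}{i}$ comes out only because this slip cancels against treating $M_\chi$ as $\varpi_E$-torsion-free. There are two ways to fix this:
\begin{itemize}
\item work over $\overline{R_\infty}$ throughout, with a regular sequence $\underline{y}$ generating its maximal ideal and the $\overline{R_\infty}$-flat module $M_\infty/\varpi_E$, as the paper does; or
\item replace $\chi$ by its $\cO_E$-flat Teichm\"uller lift $[\chi]$, for which $\Tor$ over $\cO_E[[\cO_D^\times/Z_D^1]]$ against $\pi^\vee$ agrees with $\Tor$ of $\chi$ over $\bF[[\cO_D^\times/Z_D^1]]$.
\end{itemize}
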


\begin{proof}
    Write $\overline{R_\infty}=R_\infty\otimes_{\cO_E} \bF$. 
    Its maximal ideal is generated by a regular sequence $\underline{y}$. 
    Since $M_\infty$ is flat over $R_\infty$, 
    the argument of \cite[Lemma~2.6.2]{BHH$^+$25c} gives
    $$\Ext_{\cO_D^\times/Z_D^1}^i(\chi,\pi)\cong \Tor^{\overline{R_\infty}}_i(\bF, M_\chi)^\vee.$$ 
    Here $M_\chi=M_\infty\otimes_{\cO_E[[\cO_D^\times/Z_D^1]]}\chi$, 
    whose $R_\infty$-action factors through $R_\infty^{\tau(\chi)}$, the deformation ring with Hodge--Tate weights $(0,1)$ and inertial type $\tau(\chi)$. 
    
    If $\chi\notin W_D(\bar{\rho})$, $R_\infty^{\tau(\chi)}$
    is zero by \cite[Thm.~7.1.1]{EGS15}, so $M_\chi=0$. 
    If $\chi\in W_D(\bar{\rho})$, $M_\chi$ is free of rank one over $\overline{R_\infty^{\tau(\chi)}}$ (see the proof of \cite[Prop.~4.1.1]{DL26}). 
    Let $I_\chi$ be the annihilator of $M_\chi$. 
    Using the same argument as in \cite[Prop.~2.6.3]{BHH$^+$25c}, 
    we deduce that $I_\chi$ is generated by a regular sequence of length $2f$, 
    and that $H_i(K_\bullet(\underline{y},\overline{R_\infty}/I_\chi))\cong \wedge^i(\bF^{\oplus 2f})$. 
    Since $\Tor^{\overline{R_\infty}}_i(\bF, M_\chi)$ is computed by $K_\bullet(\underline{y},M_\chi)\cong K_\bullet(\underline{y},\overline{R_\infty}/I_\chi)$, 
    we obtain the desired result about dimensions.
\end{proof}

\end{document}